\newcommand\makebig[2]{%
  \@xp\newcommand\@xp*\csname#1\endcsname{\bBigg@{#2}}%
  \@xp\newcommand\@xp*\csname#1l\endcsname{\@xp\mathopen\csname#1\endcsname}%
  \@xp\newcommand\@xp*\csname#1r\endcsname{\@xp\mathclose\csname#1\endcsname}%
}
\newcommand{\ie}{\hbox{\it i.e. }}
\renewcommand{\restriction}{\mathord{\upharpoonright}}
\newlength\fullwidth
\numberwithin{equation}{section}
\DeclareMathSymbol{\leqslant}{\mathalpha}{AMSa}{"36} 
\DeclareMathSymbol{\geqslant}{\mathalpha}{AMSa}{"3E} 
\DeclareMathSymbol{\eset}{\mathalpha}{AMSb}{"3F}     
\def\<{\langle}
\def\>{\rangle}
\renewcommand{\le}{\;\leqslant\;}                   
\renewcommand{\ge}{\;\geqslant\;}   
\renewcommand{\leq}{\;\leqslant\;}                   
\renewcommand{\geq}{\;\geqslant\;}                   
\newcommand{\maxtwo}[2]{\max_{\substack{#1 \\ #2}}} 
\newcommand{\sumtwo}[2]{\sum_{\substack{#1 \\ #2}}} 
\def\0{\textbf{0}}
\newcommand{\eps}{\epsilon}
\def\1{\ifmmode {1\hskip -3pt \rm{I}} \else {\hbox {$1\hskip -3pt \rm{I}$}}\fi}
\newcommand{\var}{\operatorname{Var}}
\newcommand{\tc}{\thinspace |\thinspace}
\newcommand{\id}{\mathbbm{1}}
\newcommand{\trel}{T_{\rm rel}}
\renewcommand{\l}{\lambda}
\renewcommand{\L}{\Lambda}
\renewcommand{\l}{\lambda}
\renewcommand{\a}{\alpha}
\renewcommand{\t}{\tau}
\newcommand{\g}{\gamma}
\newcommand{\G}{\Gamma}
\newcommand{\z}{\zeta}
\newcommand{\e}{\varepsilon}
\newtheorem{theorem}{Theorem}[section]
\newtheorem*{theorem*}{Theorem}
\newtheorem{lemma}[theorem]{Lemma}
\newtheorem{proposition}[theorem]{Proposition}
\newtheorem{corollary}[theorem]{Corollary}
\newtheorem{claim}[theorem]{Claim}
\newtheorem*{question*}{Question}
\theoremstyle{definition}
\newtheorem{definition}[theorem]{Definition}
\newtheorem*{remark*}{Remark}
\newtheorem*{idefinition*}{Definition}
\newtheorem{remark}[theorem]{Remark}
\newcommand{\cA}{\ensuremath{\mathcal A}}
\newcommand{\cB}{\ensuremath{\mathcal B}}
\newcommand{\cC}{\ensuremath{\mathcal C}}
\newcommand{\cD}{\ensuremath{\mathcal D}}
\newcommand{\cF}{\ensuremath{\mathcal F}}
\newcommand{\cG}{\ensuremath{\mathcal G}}
\newcommand{\cH}{\ensuremath{\mathcal H}}
\newcommand{\cI}{\ensuremath{\mathcal I}}
\newcommand{\cL}{\ensuremath{\mathcal L}}
\newcommand{\cM}{\ensuremath{\mathcal M}}
\newcommand{\cN}{\ensuremath{\mathcal N}}
\newcommand{\cO}{\ensuremath{\mathcal O}}
\newcommand{\cQ}{\ensuremath{\mathcal Q}}
\newcommand{\cS}{\ensuremath{\mathcal S}}
\newcommand{\cU}{\ensuremath{\mathcal U}}
\newcommand{\bbE}{{\ensuremath{\mathbb E}} }
\newcommand{\bbI}{{\ensuremath{\mathbb I}} }
\newcommand{\bbN}{{\ensuremath{\mathbb N}} }
\newcommand{\bbP}{{\ensuremath{\mathbb P}} }
\newcommand{\bbR}{{\ensuremath{\mathbb R}} }
\newcommand{\bbZ}{{\ensuremath{\mathbb Z}} }
\newcommand{\Z}{{\ensuremath{\mathbb Z}} }
\let\a=\alpha      \let\e=\varepsilon
 \let\g=\gamma       \let\l=\lambda
      \let\o=\omega      
   \let\t=\tau   
  \let\z=\zeta
   \let\G=\Gamma  \let\L=\Lambda 
\let\O=\Omega
\renewcommand{\to}{\rightarrow}
\let\epsilon\varepsilon
\let\eps\varepsilon
\def\H{\mathbb{H}}
\def\Z{\mathbb{Z}}
\def\U{\mathcal{U}}
\def\stab{\mathcal{S}}
\def\<{\langle}
\def\>{\rangle}
\def\0{\mathbf{0}}
\begin{document}
\title[]{Exact asymptotics for Duarte and supercritical
  rooted kinetically constrained
  models} 
\author[L. Mar{\^e}ch\'e]{L. Mar{\^e}ch\'e}
\email{mareche@lpsm.paris}
\address{LPSM UMR 8001, Universit\'e Paris Diderot, Sorbonne Paris Cit\'e,
  CNRS, 75013 Paris, France}

\author[F. Martinelli]{F. Martinelli}
\email{martin@mat.uniroma3.it}
\address{Dipartimento di Matematica e Fisica, Universit\`a Roma Tre, Largo S.L. Murialdo 00146, Roma, Italy}

\author[C. Toninelli]{C. Toninelli}
\email{cristina.toninelli@upmc.fr}
\address{ LPSM UMR 8001, Universit\'e Paris Diderot, Sorbonne Paris Cit\'e,
  CNRS, 75013 Paris, France}

\thanks{This work has been supported by the ERC Starting Grant 680275 MALIG. F.M. acknowledges support of PRIN 2015 5PAWZB ''Large Scale Random Structures'' and C.T.  of the ANR-15-CE40-0020-02 grant LSD}

\subjclass[2010]{Primary {60K35}, secondary 60J27}

\keywords{Glauber dynamics, kinetically constrained models, spectral
  gap, bootstrap percolation, Duarte model}

\begin{abstract}
Kinetically constrained models (KCM) are reversible interacting particle systems on $\mathbb Z^d$ with continuous time Markov dynamics of Glauber type, which represent a natural stochastic (and non-monotone) counterpart of the family of cellular automata known as $\mathcal U$-bootstrap percolation. Furthermore, KCM have an interest 
in their own since they display some of the most striking features of the  liquid-glass transition, a major and longstanding open problem in condensed matter physics.
A key issue for KCM is to identify the scaling of the characteristic time scales  when the equilibrium density of empty sites, $q$,  goes to zero. In \cite{MT,MMoT} a general scheme was devised to determine a sharp upper bound for these time scales. Our paper is devoted to developing a (very different) technique which allows to prove matching lower bounds.
We analyse the class of two-dimensional {\sl supercritical rooted KCM} and the {\sl Duarte KCM}, the most studied critical $1$-rooted model. We
 prove that the relaxation time  and the mean infection time diverge for supercritical rooted KCM as $e^{\Theta((\log q)^2)}$ and for Duarte KCM  as $e^{\Theta((\log q)^4/q^2)}$ when $q\downarrow 0$. 
These results prove the conjectures  put forward in 
\cite{Robsurvey,MMoT}, and establish that the time scales for these KCM  diverge much faster than for the
corresponding $\cU$-bootstrap processes, the main reason being the occurrence of energy barriers which determine the dominant
behaviour for KCM, but which do not matter for the  bootstrap dynamics.  

\end{abstract}
\maketitle

\section{Introduction}


Kinetically constrained models (KCM) are interacting particle systems on the integer lattice $\mathbb Z^d$, which were introduced in the physics literature in the 1980s in order to model the liquid-glass transition (see e.g.  \cites{Ritort,GarrahanSollichToninelli} for reviews), a major and still largely open problem in condensed matter physics \cite{Berthier-Biroli}. 
A generic KCM is a continuous time Markov process of Glauber type  characterised by  a finite collection of finite subsets of $\mathbb Z^d\setminus \{ \mathbf{0} \}$, $\,\mathcal U=\{X_1,\dots,X_m\}$, its {\sl update family}. A configuration $\o$ is defined by assigning to 
each site $x\in\mathbb Z^d$ an occupation variable $\omega_x\in\{0,1\},$ corresponding to an empty or occupied site respectively. Each site $x\in\mathbb Z^d$ waits an independent, mean one, exponential time and then, iff  there exists $X\in \cU$ such that $\o_y=0$ for all $y\in X+x$, site $x$ is updated to occupied with probability $p$ and to empty with probability $q=1-p$. 
Since each {\sl update set} $X_i$ belongs to $\mathbb Z^d\setminus \{ \mathbf{0} \}$,
the constraints never depend on the state of the to-be-updated site. As a consequence, the dynamics satisfies detailed balance w.r.t. the product Bernoulli($p$) measure, $\mu$, which is therefore a reversible invariant measure. Hence, the process started at $\mu$ is stationary.  

Both from  a physical and from a mathematical point of view, a central issue for KCM is to determine the speed of divergence of the characteristic time scales when $q\downarrow 0$. Two key quantities are: (i) the {\sl relaxation time} $\trel$, i.e. the inverse of the spectral gap of the Markov generator and (ii) the {\sl mean infection time} $\mathbb E_{\mu}(\tau_0)$, i.e. the mean over the stationary process of the first time at which the origin becomes empty. 
The study of the infection time has been  largely addressed   for the $\cU$-bootstrap percolation \cite{BDMS,BSU,BBPS}, a class of discrete cellular automata that can be viewed  as the monotone deterministic counterpart of KCM.
For the $\mathcal U$-bootstrap, given a set of "infected" sites $A_t\subset\mathbb Z^d$ at time $t$, infected sites remain infected, and a site $x$ becomes infected at time $t + 1$ if the translate by $x$ of one of the update sets in $\cU$ belongs to $A_t$. Thus, if infected  (non infected) sites are regarded as empty (respectively occupied) sites, the constraint that has to be satisfied  to infect a site for the $\mathcal U$-bootstrap is  the same that is required to update the occupation variable for the KCM.

In \cite{MMoT} two of the authors together with R. Morris addressed the problem of identifying the divergence of  time scales for two-dimensional KCM. 
The first goal of \cite{MMoT} was to identify the correct universality classes, which turn out to be different from those of  $\cU$-bootstrap percolation. Then, building on a strategy developed in \cite{MT} by two of the authors,
 universal upper bounds on the relaxation and mean infection time within each class were proven  and were conjectured  to be sharp up to logarithmic corrections \cite{MMoT}.
On the other hand, concerning lower bounds, so far the best general result  is \begin{equation}\label{eq:trivial_lower}\trel\geq q\mathbb E_{\mu}(\tau_0)=\Omega(T)\end{equation}  where $T$ denotes the median infection time for the $\mathcal U$-bootstrap process started with distribution $\mu$ (i.e. sites are initially infected independently with probability $q$), see \cite{MT}*{Lemma 4.3}.
However this lower bound is in general far from optimal.
Consider for example  the {\sl one-dimensional East model} \cite{JACKLE} (and  \cite{East-review} for a review) for which 
 a site can be updated iff its left neighbour is empty, namely $\mathcal U=\{\{-\vec e_1\}\}$.
As $q\downarrow 0$, it holds 
\begin{equation}\label{eq:East} \mathbb E^{\mbox{\tiny {East}}
}_{\mu}(\tau_0)=e^{(\Theta(\log q)^2)}\end{equation}
and the scaling holds for $\trel$, see~\cites{CFM,Aldous,CMRT} where the sharp value of the constant  has been determined.  This divergence is much faster than for the corresponding $\cU$-bootstrap model, for which it holds $T=\Theta(1/q)$.
To understand this difference it is necessary to recall a key combinatorial result  \cite{SE1},\cite{CDG}*{Fact 1}: in order to empty the origin the East process has to go through a configuration with $\lceil\log_2 (\ell+1)\rceil$ simultaneous empty sites in $(-\ell,0]$, where $-\ell$ is the position of the rightmost empty site on $(-\infty,0]$.
This 
logarithmic ``energy barrier'' (to employ the physics jargon)  and  the fact that  at equilibrium typically $\ell\sim 1/q$ yield a divergence of the time scale as $q^{\Theta(\log q)}=e^{(\Theta(\log q)^2)}$.
In turn, this peculiar scaling is the reason why the East model has been extensively studied by physicists (see  \cite{KGC} and references therein). Indeed, if we set $q:=e^{-\beta}$ with $\beta$ the inverse temperature, we get the so called  {\sl super-Arrhenius} divergence $e^{(\Theta(\beta^2))}$ 
which provides a very good fit of the experimental curves for fragile supercooled liquids near the glass transition \cite{Berthier-Biroli}. \\

In \cite{Robsurvey}, together with R. Morris, we conjectured that one of the universality classes of two-dimensional KCM, that we call {\sl supercritical rooted models}, features time scales diverging as  for the East model.
Our first main result  (Theorem \ref{thm:rooted})  is to establish a lower bound 
which allows  
together with the upper bound in \cite{MMoT}*{Theorem 1} to
prove this conjecture  \footnote{Actually, the conjecture in \cite{Robsurvey} states  that $\tau_0=e^{(\Theta(\log q)^2)}$ w.h.p. when $q\to 0$. As explained in Remark \ref{rem:w.h.p.}, we can also prove this stronger result.}, namely we prove
$$\mathbb E^{\tiny{\cU}}_{\mu}(\tau_0)=e^{(\Theta(\log q)^2)}\,\,\,\,\,\,\,\mbox{ $\forall\,\, \cU$ in the supercritical rooted class
}$$ and the same result for $\trel$.
As for the East model, this divergence is much faster than for the corresponding $\cU$-bootstrap process which scales as  $T=1/q^{\Theta(1)}$ \cite{BSU}.
A key input for our Theorem \ref{thm:rooted} is a combinatorial result  proved by one of the authors in \cite{Laure} (see also 
Lemma \ref{lem:laure} in this paper) which considerably generalises to a higher dimensional and non oriented setting the above recalled combinatorial result for East \footnote{The result in \cite{Laure} holds also in $d>2$ on a properly defined class, i.e. all models which are not  supercritical {\sl unrooted} (see \cite{Laure} for the precise definition). 
Our argument  immediately extends to this higher dimensional setting yielding the same lower bound as in Theorem \ref{thm:rooted} for $\trel$ and $\mathbb E_{\mu}(\tau_0)$.}.\\ 

The $\cU$-bootstrap results identify  another universality class,
 the so called {\sl critical update families}, which display a much faster divergence. In particular, in \cite{BDMS} it was proven that for this class
it holds $T=e^{(\Theta(\log)^{c}/q^{\alpha})}$ with $\alpha$ a model dependent positive integer and $c=0$ or $c=2$.
In \cite{MMoT}, together with R.Morris, we analysed KCM with critical update families and we put forward the conjecture that both  $\trel$ and $\mathbb E_{\mu}(\tau_0)$ diverge as
$e^{(\Theta(\log)^{c'}/q^{\nu})}$  with $\nu$ in general different from the exponent $\alpha$ of the corresponding $\cU$-bootstrap process and we formulated (see \cite{MMoT}*{Conjecture 3}) a conjecture for the value of $\nu$ (which is again model dependent). In \cite{MMoT}*{Theorem 2} we established upper bounds for all critical models matching this conjecture. A matching lower bound exists only for those models for which the general lower bound \eqref{eq:trivial_lower} is sharp namely, in the language of \cite{MMoT},
 for the special case of $\beta$-unrooted models with $\beta=\alpha$.
Here we focus on the most studied update family which does not belong to this special case, the {\sl Duarte update family}, which consists of  all the
2-subsets of the North, South and West neighbours of the origin  \cite{Duarte}. Our second main result is a sharp lower bound on the infection and relaxation time for the Duarte KCM (Theorem \ref{thm:Duarte}) that, together with the upper bound in \cite{MMoT}, establishes the  scaling 
$$\bbE^{\mbox{\tiny{Duarte}}}_\mu(\t_0)=e^{\Theta\big((\log q)^4/q^2\big)}$$
as $q\downarrow 0$, and the same result holds for $\trel$.
The value $\nu=2$ for the exponent is in agreement with our conjecture  \cite{MMoT}*{Conjecture 3 (a)}, indeed in the language of \cite{MMoT} Duarte is a $1$-rooted model with $\alpha=1$, thus $\nu=2$. Notice  that we identify also the exact power in the logarithmic correction. Finally, notice that  the divergence is again much faster than for the corresponding $\cU$-bootstrap model.
Indeed, the median of the infection time for the $\cU$-bootstrap Duarte model diverges as $T=e^{(\Theta(\log q)^2/q)}$ when $q\downarrow 0$ \cite{Mountford} (see also  \cite{BCMS-Duarte} for the sharp value of the constant). \\

Both for Duarte and for supercritical rooted models, 
the sharper divergence of time scales for KCM is due to the fact that the infection time is not well approximated by the minimal number of updates needed to infect the origin (as it is for bootstrap percolation), but it is instead the result of a much more complex infection/healing mechanism. In particular, visiting regions of the configuration space with an anomalous amount of infection  is heavily penalised and requires a very long time to actually take place \footnote{Borrowing again from physics jargon we could say that ``crossing the energy barriers'' is heavily penalised.}.
The basic underlying idea is that the dominant relaxation mechanism is an East like dynamics for large {\sl droplets} of empty sites. For supercritical rooted models these droplets have a finite (model dependent) size, hence an equilibrium density $q_{\mbox{\tiny{eff}}}= q^{\Theta(1)}$. For the Duarte model droplets have a size that diverges  as $\ell=\frac{| \log q|}{q}$ and thus an equilibrium density $q_{\mbox{\tiny{eff}}}=q^{\ell}=e^{-(\log q)^2/q}$.
Then a (very) rough understanding of our results is obtained by replacing $q$ with $q_{\mbox{\tiny{eff}}}$ in the result for the East model \eqref{eq:East}.
One of the key technical difficulties to translate this intuition into a  lower bound
is that the droplets cannot be identified with a rigid structure, at variance with the East model where the droplets are single empty sites. 
%
\section{Models and notation}\label{sec:notations}
\subsection{Notation}
For the reader's convenience we gather here some of the
notation that we use throughout the paper. 
We will work on the probability space $(\O,\mu)$, where $\O=\{0,1\}^{\bbZ^2}$ and $\mu$ is the product
Bernoulli($p$) measure, and we will be interested in the asymptotic
regime $q\downarrow 0,$ where $q=1-p$. Given $\o\in \O$ and $\L\subset
\bbZ^2,$ we will often write $\o_\L$ or $\o\restriction_\L$ for the
collection $\{\o_x\}_{x\in \L}$ and we shall write $\o_\L\equiv
0$ to indicate that $\o_x=0\ \forall x\in \L$. In this case we shall also
say that $\L$ is empty or infected. Similarly for $\o_\L\equiv
1$ and in this case $\L$ will be said to be occupied or healthy. We
shall write $Y(\o)$ for the set $\{x\in \bbZ^2 \colon \o_x=0\}$ and
we shall say that $f \colon \O \mapsto \bbR$ is a \emph{local
  function} if it depends on finitely many variables $\{\o_x\}_{x\in\bbZ^2}$.
Given a
site $x\in \bbZ^2$ of the form $x=(a,b)$ with $a,b\in \bbZ,$ we shall
sometimes refer to $b$ as the height of $x$. 
We shall also refer to a set $I\subset
\bbZ^2$ of the form $I=\{x,x+\vec e_i,\dots,x+(n-1) \vec e_i\}, x\in \bbZ^2,$ as
a (horizontal or vertical) interval of length $n\in \bbN^*$.
 Finally, we will use the
standard notation $[n]$ for the set $\{1,\ldots,n\}.$

Throughout this paper we will often make use of standard asymptotic
notation. If $f$ and $g$ are positive real-valued functions of $q\in (0,1)$, then we will
write $f = O(g)$ if there exists a constant $C > 0$ such that $f(q)
\le C g(q)$ for every sufficiently small $q > 0$. We will also write
$f = \O(g)$ if $g = O(f)$ and $f= \Theta(g)$ if $f =
O(g)$ and $g = O(f)$. 
All constants, including those implied by the notation $O(\cdot)$,
$\O(\cdot)$ and $\Theta(\cdot)$, will be such w.r.t. the parameter $q$. 


\subsection{Models}\label{sec:models} Fix an {\sl update family} $\,\mathcal U=\{X_1,\dots,X_m\}$, that is,
a finite collection of finite subsets of $\mathbb Z^2\setminus \{
\mathbf{0} \}$. Then the KCM with update family $\cU$ is the Markov process on $\O$ associated to the Markov generator 
\begin{equation}
  \label{eq:generator}
(\cL f)(\o)= \sum_{x\in \bbZ^2}c_x(\o)\big( \mu_x(f) - f \big)(\o),
\end{equation}
where $f \colon \O \mapsto \bbR$ is a local function, $\mu_x(f)$
denotes the average of $f$ w.r.t.~the variable $\o_x$, and $c_x$ is
the indicator function of the event that there exists $X\in \cU$ such
that $X+x$ is infected \ie $\o_{X+x}\equiv 0.$ In the sequel we will
sometimes say that $\o$ satisfies the update rule at $x$ if
$c_x(\o)=1.$ 

Informally, this process can be described as follows. Each vertex
$x\in \bbZ^2$, with rate one and independently across $\bbZ^2$, is
resampled from $\big( \{0,1\},{\rm Ber}(p) \big)$ iff the update rule
at $x$ was satisfied by the current
configuration. In what follows, we will sometimes
call such resampling a \emph{legal update} or \emph{legal spin
  flip}. The general theory of interacting particle systems
(see~\cite{Liggett}) proves that $\cL$ becomes the generator
of a reversible Markov process $\{\o(t)\}_{t\ge 0}$ on $\O$, with reversible measure $\mu$. The corresponding
 Dirichlet form is
\[
\cD(f)= \sum_{x\in \bbZ^2}\mu\big(c_x \var_x(f)\big),
\] 
where $\var_x(f)$ denotes the variance of the local function $f$ w.r.t. the variable
$\o_x$ conditionally on $\{\o_y\}_{y\neq x}$. If $\nu$ is a
probability measure on $\O,$ the law of the process
with initial distribution $\nu$ will be denoted by $\bbP_\nu(\cdot)$
and the corresponding expectation by $\bbE_\nu(\cdot)$. If $\nu$ is
concentrated on a single configuration $\o$ we will simply write
$\bbP_\o(\cdot)$ and $\bbE_\o(\cdot)$.    

Given a KCM, and therefore an update family $\cU$,   the corresponding \emph{$\cU$-bootstrap process} on $\Z^2$ is defined as follows: given a set $Y \subset \Z^2$ of initially \emph{infected} sites, set $Y(0) = Y$, and define for each $t \geq 0$, 
\begin{equation}\label{eq:def:Uboot:At}
Y(t+1) = Y(t) \cup \big\{ x \in \Z^2 \,:\, X + x \subseteq Y(t) \text{ for some } X \in \cU \big\}.
\end{equation}
The set $Y(t)$ will represent the set of infected sites at time $t$
and we write $[Y] = \bigcup_{t \ge 0} Y(t)$ for the \emph{closure} of $Y$ under the $\cU$-bootstrap process. We will also call $T$ the median of the first infection time of the origin when the process is started with sites independently infected (healthy) with probability $q$ (respectively $p
=1-q$).

\section{A variational  lower bound for $\bbE_\mu(\t_0)$}
\label{sec:general}

As mentioned in the Introduction, our main goal
is to prove sharp lower bounds for the characteristic time scales of  supercritical rooted KCM and of the Duarte KCM. 
Let us start by defining precisely these time scales, namely the relaxation time $\trel$ (or inverse of the spectral gap) and the mean infection time $\bbE_\mu(\t_0)$.

\begin{definition}[Relaxation time, $\trel$]
\label{def:PC} Given an update family $\cU$ and $q\in[0,1]$,
we say that $C>0$ is a Poincar\'e constant for the corresponding KCM if, for all local functions $f$, we have
\begin{equation}
  \label{eq:gap}
\var_{\mu}(f) \leq C \, \cD(f).
\end{equation}
If there exists a finite Poincar\'e constant we then define
\[
\trel(q,\cU):=\inf\big\{ C > 0 \,:\, C \text{ is a Poincar\'e constant} \big\}.
\]
Otherwise we say that the relaxation time is infinite. We will drop the $(q,\cU)$ notation setting $\trel:=\trel(q,\cU)$ when confusion does not arise.
\end{definition}
A finite relaxation time implies that the reversible measure $\mu$ is mixing for the
semigroup $P_t=e^{t\cL}$ with exponentially decaying time
auto-correlations \cite{Liggett}.

\begin{definition}[Mean infection time, $\bbE_{\mu}(\tau_0)$]
Let  $A=\{\o\in \O:\ \o_0=0\}$. Then $$
\t_0 = \inf\big\{ t \ge 0 \,:\, \o(t)\in A \big\}.$$ 
Given an update family $\cU$ and $q\in[0,1]$, we let $\mathbb E_{\mu}^{q,\cU}(\tau_0)$ be the mean of the infection time of the origin under the corresponding stationary KCM  (i.e. when the initial configuration is distributed with Bernoulli$(1-q)$).  We will drop the $(q,\cU)$ notation setting $\mathbb E_{\mu}(\tau_0):=\mathbb E_{\mu}^{q,\cU}(\tau_0)$ when confusion does not arise.
\end{definition} 
In the physics literature the hitting time $\t_0$ is closely related
to the \emph{persistence time}, \ie the first time that there is a
legal update at the origin. All our lower bounds can be easily extended to the persistence time.

It is known that the following inequality holds (see \cite{MMoT}*{Section 2.2}): 
\begin{equation}
  \label{eq:mean-infection}
\bbE_\mu(\t_0) \leq \frac{\trel(q,\cU)}{q} \qquad \forall \; q\in (0,1).
\end{equation}
 Therefore we will focus on obtaining lower bounds on $\mathbb E_{\mu}(\tau_0)$ and then use \eqref{eq:mean-infection} to derive the results for $\trel$ (indeed the correction $q$ in the above inequality is largely subdominant w.r.t. the lower bounds we will obtain).
To this aim we establish a variational lower bound on $\bbE_\mu(\t_0)$
(Lemma \ref{lem:basic:bound}), which will be our first tool.
Recall that $A=\{\o\in \O:\ \o_0=0\}$ and let $H_A$ be the Hilbert
space $\{f\in L^2(\O,\mu):
f\!\restriction_A=0\}$ with scalar product inherited from the
standard one in $L^2(\O,\mu)$. Let also $\cL_A$ be
the negative self-adjoint operator on $H_A,$ whose action on local
functions is given by
\[
\cL_A f(\o)= \id_{A^c}(\o)\cL f (\o).
\]
It turns out (see e.g. \cite{DaiPra}*{Section 3}) that, for any local function $f\in H_A$ and any $\o\in A^c$, 
\[
\bbE_\o \big(f(\o(t))\id_{\{\t_0>t\}}\big)= e^{t\cL_A}f(\o).
\]
In particular, by choosing $f=\id_{A^c}(\cdot),$ one gets
\[
\bbP_\mu(\t_0>t)= \int d\mu(\o)
\id_{A^c}(\o)e^{t\cL_A}\id_{A^c}(\o)=\langle
\id_{A^c},e^{t\cL_A}\id_{A^c}\rangle,
\]
where $\langle\cdot,\cdot\rangle$ denotes the scalar product on
$L^2(\O,\mu).$ Thus
\begin{equation}
  \label{eq:3}
\bbE_\mu(\t_0)= \int_{0}^\infty dt \ \langle \id_{A^c},e^{t\cL_A}\id_{A^c}\rangle\ge
\int_{0}^T dt \ \langle \id_{A^c},e^{t\cL_A}\id_{A^c}\rangle \quad \forall \ T>0.
\end{equation}
\begin{lemma}
\label{lem:basic:bound}
Let $\phi\in H_A$ be a local function
such that $\mu(\phi^2)=1.$ Then   
\[
\bbE_\mu(\t_0)\ge T |\mu(\phi)|\Big(|\mu(\phi)| e^{-T\cD(\phi)}-
\big(T\cD(\phi)\big)^{1/2}\Big),\quad \forall \ T>0.
\]
\end{lemma}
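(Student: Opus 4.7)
The starting point is the bound $\bbE_\mu(\tau_0)\ge\int_0^T\langle\id_{A^c},e^{t\cL_A}\id_{A^c}\rangle\,dt$ from \eqref{eq:3}. Since $-\cL_A$ is a non-negative self-adjoint operator on $H_A$, the spectral theorem gives $\langle f,e^{t\cL_A}f\rangle=\int_0^\infty e^{-t\lambda}\,d\nu_f(\lambda)$ (with $\nu_f$ the spectral measure of $f$), which is non-increasing in $t$. Hence
\[
\bbE_\mu(\tau_0)\,\ge\, T\,\langle\id_{A^c},e^{T\cL_A}\id_{A^c}\rangle,
\]
producing the overall prefactor $T$ in the statement and reducing the problem to a pointwise lower bound at time $T$.

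The main step is Cauchy--Schwarz applied to the positive semidefinite bilinear form $(f,g)\mapsto\langle f,e^{T\cL_A}g\rangle$ with $g=\phi$:
\[
\langle\id_{A^c},e^{T\cL_A}\id_{A^c}\rangle\,\ge\,\frac{\mu(e^{T\cL_A}\phi)^2}{\langle\phi,e^{T\cL_A}\phi\rangle}.
\]
For the denominator, Jensen's inequality applied to the convex function $\lambda\mapsto e^{-T\lambda}$ and the probability measure $d\nu_\phi$ (of mean $\cD(\phi)$ since $\mu(\phi^2)=1$) gives $\langle\phi,e^{T\cL_A}\phi\rangle\ge e^{-T\cD(\phi)}$, while the $L^2$-contraction property gives $\langle\phi,e^{T\cL_A}\phi\rangle\le 1$. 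Both bounds will be used.

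To lower bound $\mu(e^{T\cL_A}\phi)$ in terms of $\mu(\phi)$ and $\cD(\phi)$, I would use the orthogonal decomposition $\id_{A^c}=\mu(\phi)\phi+r$ in $L^2(\mu)$, with $\langle r,\phi\rangle=0$ and $\|r\|_2^2=p-\mu(\phi)^2\le 1$, to write
\[
\mu(e^{T\cL_A}\phi)\,=\,\mu(\phi)\,\langle\phi,e^{T\cL_A}\phi\rangle\,+\,\langle r,(e^{T\cL_A}-I)\phi\rangle,
\]
where the second term appears because $\langle r,\phi\rangle=0$. The spectral estimate
\[
\|(I-e^{T\cL_A})\phi\|_2^2\,=\,\int(1-e^{-T\lambda})^2\,d\nu_\phi\,\le\, T\cD(\phi),
\]
which uses the elementary inequality $(1-e^{-x})^2\le x$ valid for $x\ge 0$, together with $L^2$ Cauchy--Schwarz, bounds the second term by $\|r\|_2\sqrt{T\cD(\phi)}\le\sqrt{T\cD(\phi)}$. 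Taking WLOG $\mu(\phi)\ge 0$ (replacing $\phi$ by $-\phi$ leaves both $|\mu(\phi)|$ and $\cD(\phi)$ unchanged), applying Jensen on the first term yields $\mu(e^{T\cL_A}\phi)\ge\mu(\phi)\,e^{-T\cD(\phi)}-\sqrt{T\cD(\phi)}$.

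Assembling the estimates, I would write $\mu(e^{T\cL_A}\phi)=\mu(\phi)D+\epsilon$ with $D=\langle\phi,e^{T\cL_A}\phi\rangle$ and $|\epsilon|\le\|r\|_2\sqrt{T\cD(\phi)}$, then expand $\mu(e^{T\cL_A}\phi)^2/D=\mu(\phi)^2 D+2\mu(\phi)\epsilon+\epsilon^2/D$. The Jensen lower bound $D\ge e^{-T\cD(\phi)}$ applied to the ``signal'' $\mu(\phi)^2 D$ produces the $e^{-T\cD(\phi)}$ factor in the lemma, while the cross term $2\mu(\phi)\epsilon$ produces the noise contribution $|\mu(\phi)|\sqrt{T\cD(\phi)}$. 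The main obstacle I anticipate is the final step: obtaining the sharp coefficient $1$ (rather than $2$) in front of $\sqrt{T\cD(\phi)}$, since the naive bound $(a-b)^2\ge a^2-2ab$ would lose a factor of $2$. I expect the precise coefficient to follow from carefully keeping the positive remainder $\epsilon^2/D\ge 0$ and from the AM--GM estimate $2|\mu(\phi)|\cdot\|r\|_2\le\mu(\phi)^2+\|r\|_2^2=p\le 1$, which exploits the orthogonality $\langle r,\phi\rangle=0$ that is crucial throughout the argument.
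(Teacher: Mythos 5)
Your argument is essentially the paper's: once you expand the Cauchy--Schwarz ratio $\langle\id_{A^c},e^{T\cL_A}\phi\rangle^2/\langle\phi,e^{T\cL_A}\phi\rangle$ and discard the positive remainder $\epsilon^2/D$, you land exactly on the paper's intermediate bound $\mu(\phi)^2\langle\phi,e^{T\cL_A}\phi\rangle-2|\mu(\phi)|\,|\langle r,e^{T\cL_A}\phi\rangle|$ (with $r=\psi$), and the remaining ingredients --- orthogonality of $r$ and $\phi$, the spectral estimate $(1-e^{-x})^2\le x$, and Jensen --- coincide, the reduction to time $T$ via monotonicity being only a cosmetic variant of the paper's uniform-in-$t$ estimate. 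On the constant you flagged: your AM--GM step produces $-\sqrt{T\cD(\phi)}$ in place of $-|\mu(\phi)|\sqrt{T\cD(\phi)}$, which, since $|\mu(\phi)|\le 1$, is \emph{weaker} than the stated inequality rather than recovering it; however, the paper's own proof likewise only establishes the version with $-2|\mu(\phi)|\big(T\cD(\phi)\big)^{1/2}$, so the stated coefficient $1$ is not obtained there either, and any of these variants is equally sufficient for Corollary \ref{cor:basic:bound}.
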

\begin{proof}
Let $\phi\in H_A$ be as in the statement and write 
\[
\id_{A^c}= \a \phi + \psi, 
\]
where $\a=\langle \id_{A^c}, \phi\rangle=\mu(\phi)$ and
$\langle\phi,\psi\rangle=0$. Clearly $\langle \psi,\psi\rangle=\mu(A^c)-\a^2$. 
We claim that, for any $T>0$ and any $t\in [0,T],$
\begin{equation}
  \label{eq:1}
  \langle \id_{A^c},e^{t\cL_A} \id_{A^c}\rangle \ge \a^2e^{-T \cD(\phi)} -
  2|\a|\big(T\cD(\phi)\big)^{1/2},
\end{equation}
which, combined with \eqref{eq:3}, proves the lemma.
To prove the claim  we write 
\begin{align}
\label{eq:4}
\langle \id_{A^c},e^{t\cL_A} \id_{A^c}\rangle &\ge  \a^2 \langle
                                                \phi,e^{t\cL_A}\phi\rangle
                                                - 2 |\a|\, |\langle
                                                \psi,
                                                e^{t\cL_A}\phi\rangle
                                                |\nonumber \\
&=
\a^2 \langle \phi, e^{t\cL_A}
\phi\rangle - 2 |\a|\, |\langle \psi, (\bbI -e^{t\cL_A})\phi\rangle
  |\nonumber \\
&\ge \a^2 \langle \phi, e^{t\cL_A} \phi\rangle - 2 |\a|\, \langle\phi,\big(\bbI-e^{t\cL_A}\big)^2\phi\rangle^{1/2}.
 \end{align}
Above we discarded the positive term $\langle \psi, e^{t\cL_A}\psi\rangle$ in the first
line, we used
$\langle\phi,\psi\rangle=0$ in the
second line  and appealed to the Cauchy-Schwartz inequality together
with $\langle\psi,\psi\rangle\le 1$ in the
third line.
Let now $\pi(d\l)$ be the spectral measure of $-\cL_A$
associated to $\phi$ (see e.g. \cite{Reed-Simon}*{Chapter VII}). Since
$\mu(\phi^2)=1,$ $\pi(d\l)$ is a probability measure on $[0,+\infty)$.  
The functional calculus theorem, together with the Jensen inequality
and $(1-e^{-t\l})^2\le t\l,$ implies that for any $t\in [0,T]$
\begin{align*}
\text{r.h.s. \eqref{eq:4}}&=
\a^2 \int_0^\infty
  d\pi(\l) e^{-t\l} -2|\a|\, \Big(\int_0^\infty
  d\pi(\l)(1-e^{-t\l})^2\Big)^{1/2}\\
&\ge \a^2e^{-t\cD_A(\phi)}- 2|\a|\big(t\cD_A(\phi)\big)^{1/2}
\\
&\ge \a^2e^{-T\cD(\phi)}- 2|\a|\big(T\cD(\phi)\big)^{1/2},
  \end{align*}
where $\cD_A(\phi)=\langle \phi, -\cL_A\phi\rangle= \langle \phi, -\cL
\phi\rangle=\cD(\phi)$ because $\phi$ is a local function in $H_A.$ The claim is proved.
\end{proof}
The main strategy to take advantage of Lemma
\ref{lem:basic:bound} for $q$ very small is to look for a family of local functions
$\{\phi_q\}$ in $H_A$, normalised in such a way that
  $\mu(\phi_q^2)=1$, determining a sharp lower bound when 
   inserted in the inequality of Lemma \ref{lem:basic:bound} with a proper choice of $T$.
  More precisely we will use the following easy corollary of Lemma \ref{lem:basic:bound}:
  \begin{corollary}[Proxy functions]
  \label{cor:basic:bound}
  If there exists a family of local functions
$\{\phi_q\}$ in $H_A$ with $\mu(\phi_q^2)=1$ and
\begin{equation} 
  \label{eq:5}
\lim_{q\to 0}\cD(\phi_q)=0 \quad \text{and}\quad 
\lim_{q\to 0}\mu(\phi_q)^4/\cD(\phi_q)=+\infty.  
\end{equation}
then it holds \begin{equation}
  \label{eq:2}
\bbE_\mu(\t_0)=\O\Big(\mu(\phi_q)^4/\cD(\phi_q)\Big).
\end{equation}
\end{corollary}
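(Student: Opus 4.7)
The statement is an immediate consequence of Lemma~\ref{lem:basic:bound}: once the lemma is in hand, the only freedom left is the choice of the parameter $T$, so the plan is simply to pick a $T=T_q$ that is good enough. Write $\alpha_q := \mu(\phi_q)$ and $D_q := \cD(\phi_q)$. The normalisation $\mu(\phi_q^2)=1$ combined with Cauchy--Schwarz forces $|\alpha_q|\le 1$, while the assumption $\mu(\phi_q)^4/\cD(\phi_q)\to +\infty$ together with $|\alpha_q|\le 1$ forces $D_q\to 0$ and $\alpha_q\neq 0$ for every $q$ small enough.

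The two competing terms in the lemma's inequality are $\alpha_q^2 e^{-TD_q}$ (which decreases with $T$) and $|\alpha_q|(TD_q)^{1/2}$ (which increases with $T$). They are balanced when $TD_q$ is a small constant multiple of $\alpha_q^2$, so I would set
\[
T_q \;:=\; \frac{\alpha_q^2}{4\,D_q}.
\]
This choice is legitimate: $D_q>0$, since otherwise $\phi_q$ would be $\mu$-a.s.\ constant, which is incompatible with $\phi_q\restriction_A=0$ and $\mu(\phi_q^2)=1$. With this choice one has $T_q D_q=\alpha_q^2/4\le 1/4$, hence $e^{-T_q D_q}\ge e^{-1/4}$ and $(T_q D_q)^{1/2}=|\alpha_q|/2$.

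Plugging $T=T_q$ into Lemma~\ref{lem:basic:bound} gives
\[
\bbE_\mu(\tau_0) \;\ge\; \frac{\alpha_q^2}{4\,D_q}\,|\alpha_q|\Bigl(|\alpha_q|\,e^{-\alpha_q^2/4}-\tfrac{|\alpha_q|}{2}\Bigr)
\;=\; \frac{\alpha_q^4}{4\,D_q}\Bigl(e^{-\alpha_q^2/4}-\tfrac12\Bigr)
\;\ge\; c_0\,\frac{\mu(\phi_q)^4}{\cD(\phi_q)},
\]
with $c_0:=(e^{-1/4}-1/2)/4>0$, which is the announced $\O\bigl(\mu(\phi_q)^4/\cD(\phi_q)\bigr)$ lower bound.

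There is no real obstacle at this step: the substance of the corollary is strategic rather than technical, namely that it is enough to build one local function $\phi_q\in H_A$ satisfying \eqref{eq:5} in order to obtain a useful lower bound on $\bbE_\mu(\tau_0)$. The genuine difficulty is deferred to the later sections, where one must design \emph{proxy functions} $\phi_q$ whose ratio $\mu(\phi_q)^4/\cD(\phi_q)$ actually realises the conjectured time scales, i.e.\ $e^{\Theta((\log q)^2)}$ for supercritical rooted KCM and $e^{\Theta((\log q)^4/q^2)}$ for the Duarte model.
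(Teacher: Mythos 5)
Your argument is correct and is essentially the paper's proof: the paper likewise just applies Lemma~\ref{lem:basic:bound} with $T(q)=|\mu(\phi_q)|^2/(16\,\cD(\phi_q))$, i.e.\ the same choice of $T$ proportional to $\mu(\phi_q)^2/\cD(\phi_q)$ up to an inessential constant. (Only a pedantic aside: your justification that $\cD(\phi_q)>0$ via ``$\cD(f)=0$ forces $f$ constant'' is not literally valid for a constrained Dirichlet form, but this edge case is harmless since the hypothesis that $\mu(\phi_q)^4/\cD(\phi_q)$ tends to $+\infty$ presupposes the ratio is defined, and if $\cD(\phi_q)=0$ one gets $\bbE_\mu(\tau_0)=\infty$ directly from the lemma by letting $T\to\infty$.)
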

\begin{proof} The result follows immediately using Lemma \ref{lem:basic:bound} and 
 choosing
$T\equiv T(q)=|\mu(\phi_q)|^2/(16\cD(\phi_q))$.\end{proof}
Any function $\phi=\phi_q$ with the above properties will be
called a \emph{test} or \emph{proxy} function and, in the rest of the paper, we will focus on constructing an efficient test
function for the so called \emph{supercritical rooted KCM} and for the \emph{Duarte KCM}.

\section{Supercritical rooted KCM}
%
%
%
%
In order to define the class of {\sl supercritical rooted} update families we should begin by recalling the key geometrical notion of \emph{ stable directions} introduced in \cite{BSU}. Given a
  unit vector $u \in S^1$, let 
$\H_u := \{x \in \Z^2 : \< x,u \> < 0 \}$
denote the discrete half-plane whose boundary is perpendicular to $u$. 
Then, for a given update family $\cU$, the set of {stable directions} is
$$\stab = \stab(\U) = \big\{ u \in S^1 \,:\, [\H_u]= \H_u \big\}.$$
The update family $\cU$ is \emph{supercritical} if there exists an open semicircle in $S^1$ that is disjoint from $\cS$. In \cite{BSU} it was proven that for each supercritical update family the median of the infection time of the $\cU$-bootstrap processes diverges as $1/q^{\Theta(1)}$.
In \cite{Robsurvey}, the author R. Morris together with two of us, conjectured that
not all supercritical update families give rise to the same scaling for KCM
and that the supercritical class
should  be refined into two subclasses to capture the KCM scaling as follows.
\begin{definition}\label{def:rooted}
A supercritical two-dimensional update family $\cU$ is said to be
\emph{supercritical rooted} if there exist two non-opposite stable directions in
$S^1$. Otherwise it is called \emph{supercritical unrooted}.
\end{definition} 
An example of supercritical rooted family is the two dimensional East model, with update family   $\cU=\{\{-\vec e_1\},\{-\vec e_2\}\}$ \footnote{We stress that the supercritical rooted class contains also update families which do not share the special "orientation" property  of the East model, namely the fact that all $X_i$ belong to an half plane. For example, it is easy to verify that the non oriented update family $\,\cU=\{\{-\vec e_1\},\{-\vec e_2\}, \{(\vec e_1,\vec e_2)\}\}$ has exactly two stable directions, $-\vec e_1$ and $-\vec e_2$ and, according to our Definition \ref{def:rooted}, it is  supercritical rooted.}. 
In \cite{MMoT} it was proved that $\bbE_\mu(\t_0)$ and $\trel$ diverge as an
inverse power of $q$ as $q\to 0$ in the supercritical {\sl unrooted}  case,
while in the {\sl rooted} case it satisfies (see \cite{MMoT}*{Theorem 1 (b)})
\[
\trel\le e^{O((\log q)^2)}
\]
and, thanks to \eqref{eq:mean-infection}, the same bound holds for $\bbE_\mu(\t_0)$.
Here we prove a matching lower bound in the
rooted case. 
\begin{theorem} 
\label{thm:rooted}Let $\cU$ be a two dimensional supercritical rooted update family. Then  
  \[
 \bbE_\mu(\t_0)\ge e^{\O((\log q)^2)} \quad \text{as }q\to 0 .
\]
\end{theorem}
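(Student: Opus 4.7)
The plan is to apply Corollary~\ref{cor:basic:bound} with an indicator proxy function $\phi_q = \id_{E_q}/\sqrt{\mu(E_q)}$ designed to capture the logarithmic energy barrier characteristic of rooted models. Using the two non-opposite stable directions $u_1, u_2$ guaranteed by Definition~\ref{def:rooted}, fix a parallelogram $\Lambda \subset \bbZ^2$ with sides perpendicular to $u_1, u_2$, of linear size $\ell = q^{-\alpha}$ for a sufficiently small $\alpha \in (0,1/2)$, with the origin at a suitable corner; also designate a subregion $\Lambda' \subset \Lambda$ of comparable size on which Lemma~\ref{lem:laure} applies. Set $k = c \log(1/q)$ with $c$ small and define
\[
E_q := \bigl\{\omega :\ \omega_0 = 1,\ |Y(\omega_\Lambda)| \le k,\ \text{and no legal $\cU$-KCM trajectory from $\omega$ staying inside $\{|Y(\cdot_\Lambda)| \le k\}$ ever reaches $\{\omega_0=0\}$}\bigr\},
\]
i.e.\ $E_q$ collects the ``safe'' connected components of the $\le k$-restricted dynamics graph intersected with $\{\omega_0 = 1\}$. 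Clearly $\phi_q \in H_A$ and $\mu(\phi_q^2) = 1$.

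The lower bound $\mu(E_q) = \Theta(1)$ follows from Lemma~\ref{lem:laure}: the combinatorial result ensures that any $\omega$ with $\omega_{\Lambda'} \equiv 1$ satisfies the property that every $\cU$-KCM trajectory emptying the origin must pass through a configuration with strictly more than $k$ simultaneous empty sites in $\Lambda$; in particular such $\omega$ belongs to $E_q$, so $\mu(E_q) \ge p^{|\Lambda'|} = (1-q)^{O(q^{-2\alpha})} \to 1$ as $q\to 0$ since $\alpha < 1/2$ ensures $q\,|\Lambda'| = o(1)$. For the Dirichlet form upper bound, the key observation is: if $\omega \in E_q$ and a legal flip at site $x$ takes $\omega$ to $\omega^x \notin E_q$, then $\omega^x$ cannot belong to the restricted state space. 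Indeed if it did, then $\omega \to \omega^x$ would be a legal restricted move, so $\omega$ and $\omega^x$ would lie in the same restricted-dynamics component; any trajectory from $\omega^x$ reaching $\{\omega'_0=0\}$ inside that space would then extend to one from $\omega$, contradicting $\omega \in E_q$ (and conversely $\omega^x_0 = 1$ would force $\omega^x \in E_q$). Hence necessarily $\omega_x = 1$, $\omega^x_x = 0$, and $\omega$ has \emph{exactly} $k$ empty sites in $\Lambda$. Since moreover $\mathrm{Var}_x(\id_{E_q}) \le pq$, summing over $x \in \Lambda$ (other $x$ give $\mathrm{Var}_x(\id_{E_q}) = 0$) yields
\[
\cD(\id_{E_q}) \le pq\,|\Lambda|\,\mu\bigl(|Y(\omega_\Lambda)| = k\bigr) \le |\Lambda|\binom{|\Lambda|}{k} q^{k+1},
\]
and Stirling together with $|\Lambda| = \Theta(q^{-2\alpha})$, $k = c \log(1/q)$ produces $\cD(\id_{E_q}) \le e^{-c'(\log q)^2}$ for some $c' = c'(\alpha,c) > 0$.

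Combining, $\mu(\phi_q)^4/\cD(\phi_q) = \mu(E_q)^3/\cD(\id_{E_q}) \ge e^{c''(\log q)^2}$ for some $c'' > 0$, and Corollary~\ref{cor:basic:bound} gives the desired $\bbE_\mu(\t_0) \ge e^{\O((\log q)^2)}$; the matching bound on $\trel$ then follows from \eqref{eq:mean-infection}, since the extra factor $q$ is negligible compared to $e^{\O((\log q)^2)}$. The main obstacle is verifying the measure lower bound $\mu(E_q) = \Theta(1)$ of Step 2, which is precisely where the \emph{rootedness} hypothesis is used via Lemma~\ref{lem:laure}: for supercritical \emph{unrooted} models the stable directions are opposite, no \emph{bounded} parallelogram $\Lambda$ with the required barrier property exists, and the combinatorial barrier degrades from $\Theta(\log \ell)$ to $O(1)$, in line with the polynomial time scales in that class. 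Moreover, unlike the East model where all update sets lie in a single half-plane and $\Lambda$ can be taken to be a simple axis-aligned rectangle, the general supercritical rooted setting is intrinsically two-dimensional: exploiting both stable directions simultaneously to construct $\Lambda$ and $\Lambda'$ compatibly with the geometric hypothesis of Lemma~\ref{lem:laure} is the technical crux.
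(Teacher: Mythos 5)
Your proposal follows essentially the same route as the paper: an indicator proxy function for a set of configurations protected by the logarithmic energy barrier of Lemma~\ref{lem:laure}, with $\mu(E_q)=1-o(1)$ because the relevant box has area $o(1/q)$, and a Dirichlet form bounded by the probability of seeing $\Theta(\log(1/q))$ simultaneous empty sites in a polynomially sized region. Two points deserve attention. First, as literally written your set $E_q$ is defined through legal trajectories of the full dynamics on $\bbZ^2$ subject only to a cap on the number of empty sites \emph{inside} $\Lambda$; such trajectories may flip sites outside $\Lambda$ and exploit the (typically supercritical) exterior, so $E_q$ is not a local event, $\phi_q$ is not a local function, and your assertion that $\var_x(\id_{E_q})=0$ for $x\notin\Lambda$ fails. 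The paper's definition of $\cA_n$ in \eqref{eq:6} fixes exactly this: membership is decided by reachability for the modified dynamics with the exterior frozen to be entirely empty, which makes the event depend only on $\o_{\L_n}$ and, since an all-empty exterior only enlarges the set of legal moves, still dominates the true dynamics. With that adjustment your exit-from-$E_q$ argument and the bound $\cD\le|\Lambda|\binom{|\Lambda|}{k}q^{k+1}=e^{-\O((\log q)^2)}$ go through as in the paper. Second, your geometric preamble (a parallelogram adapted to the two stable directions, a subregion $\Lambda'$, the origin at a corner) is unnecessary: Lemma~\ref{lem:laure} is stated directly for a square $\L_n$ centred at the origin, and all use of rootedness is internal to that lemma; the user only needs to choose $n=\lfloor\e\log_2(1/q)\rfloor$ with $\e<1/2$ so that $|\L_n|=o(1/q)$ while the barrier $n-1$ is still $\Theta(\log(1/q))$.
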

Thus we prove 
\begin{corollary}\label{cor:rooted}
 Let $\cU$ be a two dimensional supercritical rooted update family. Then  
  \[
\trel(q,\cU) = e^{\Theta((\log q)^2)} \quad \text{as }q\to 0 .
\]
and the same result holds for $ \bbE_\mu(\t_0)$.
\end{corollary}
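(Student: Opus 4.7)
The plan is to obtain Corollary \ref{cor:rooted} as a direct consequence of Theorem \ref{thm:rooted} and the complementary upper bound already established in \cite{MMoT}, glued together by the general inequality \eqref{eq:mean-infection}, which allows us to pass from $\bbE_\mu(\t_0)$ bounds to $\trel$ bounds and vice versa. Once we have the sharp lower bound of Theorem \ref{thm:rooted} in hand, no new technical input is required: the two displayed equalities of the corollary reduce to matching an $e^{\Omega((\log q)^2)}$ lower bound with an $e^{O((\log q)^2)}$ upper bound for each of the two quantities.

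First I would derive the lower bound on $\trel$. By Theorem \ref{thm:rooted}, $\bbE_\mu(\t_0)\ge e^{\Omega((\log q)^2)}$ as $q\downarrow 0$. Multiplying \eqref{eq:mean-infection} by $q$ yields $\trel\ge q\,\bbE_\mu(\t_0)$. Since $\log(1/q)=o\big((\log q)^2\big)$, the prefactor $q=e^{-\log(1/q)}$ is subdominant and is absorbed into the exponent, giving $\trel\ge e^{\Omega((\log q)^2)}$. For the matching upper bound on $\trel$, I would invoke \cite{MMoT}*{Theorem 1(b)}, which, applied to any supercritical rooted update family $\cU$ in the sense of Definition \ref{def:rooted}, directly provides $\trel\le e^{O((\log q)^2)}$. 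Combining these two bounds proves the stated equality $\trel(q,\cU)=e^{\Theta((\log q)^2)}$.

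To deduce the analogous statement for $\bbE_\mu(\t_0)$, the lower bound is already Theorem \ref{thm:rooted}. For the upper bound I would feed the just-established $\trel\le e^{O((\log q)^2)}$ back into \eqref{eq:mean-infection} to obtain $\bbE_\mu(\t_0)\le \trel/q\le q^{-1}\,e^{O((\log q)^2)}=e^{O((\log q)^2)}$, where once again the factor $1/q$ is harmlessly absorbed. Thus $\bbE_\mu(\t_0)=e^{\Theta((\log q)^2)}$, completing the proof. The only substantive obstacle in this chain is Theorem \ref{thm:rooted} itself, whose proof (via Corollary \ref{cor:basic:bound} and the combinatorial input of Lemma \ref{lem:laure}) does the genuine work; the corollary is merely bookkeeping.
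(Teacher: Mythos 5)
Your proposal is correct and matches the paper's own argument: the lower bound on $\trel$ comes from Theorem \ref{thm:rooted} via $\trel\ge q\,\bbE_\mu(\t_0)$ (a rearrangement of \eqref{eq:mean-infection}) with the factor $q$ absorbed into the exponent, and the upper bounds come from \cite{MMoT}*{Theorem 1 (b)} together with \eqref{eq:mean-infection}. The paper states this more tersely, but the logical content is identical.
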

\begin{proof}[Proof of the corollary]
The lower bound follows at once from \eqref{eq:mean-infection} and
Theorem \ref{thm:rooted}. The upper bound was proved in
\cite{MMoT}*{Theorem 1 (b)}.   
\end{proof}

In order to prove Theorem  \ref{thm:rooted} we will  use the variational lower bound of Section \ref{sec:general} and more precisely
look for a proxy function $\phi\equiv \phi_q$ satisfying the key hypothesis of Corollary \ref{cor:basic:bound}.
 We
first need to introduce the notion of a legal path in $\O$. 
\begin{definition}[Legal path] 
\label{def:legal}Fix an update family $\cU$, then a legal path $\g$ in $\O$ is a finite sequence $\g=\big(\o^{(0)},\dots,\o^{(n)}\big)$ such
  that, for each $i\in [n],$ the configurations $\o^{(i-1)},\o^{(i)}$
  differ by a legal (with respect to the choice $\cU$) spin flip at some vertex $v\equiv
  v(\o^{(i-1)},\o^{(i)})$. A generic ordered (along $\g$) pair of consecutive
  configurations in $\g$
  will be called an \emph{edge}. Given a set $\hat \O\subset \O$ and a
  configuration $\o,$ we say that $\o$ is  a legal path
  connecting  $\hat \O$ to $\o$ if there exists a legal path
  $\g=\big(\o^{(0)},\dots,\o^{(n)}\big)$ such that $\o^{(0)}\in \hat
  \O$ and $\o^{(n)}=\o$. 
\end{definition}
Let $\cU$ be a supercritical rooted update family and, 
for $n\ge 1$ and $\kappa\in \bbN^*$, let
$\L_n:=\L_n(\kappa)\subset \mathbb Z^2$ be the square centred at the origin,
of cardinality $(\kappa
n2^n+1)^2.$ Let also
\begin{equation}
  \label{eq:6}
  \begin{split}
\cA_n=&\{\o\in \O\colon
(\o_{\L_n},\tilde\o_{\L^c_n}\equiv 0)
\text{\it \ can be
reached from $\big(\hat\o_{\L_n}\equiv 1,
  \hat\o_{\L^c_n}\equiv 0\big)$ by a
legal}\\&
\text{\it path $\g$ such that any $\o'\in \g$ has at most $n-1$ empty vertices in
$\L_n$}\}.
  \end{split}
\end{equation}
Recall that $A=\{\o\in \O\colon \o_0=0\}$. In \cite{Laure} one of the authors established the following key combinatorial result concerning the structure of the
set $\cA_n$ : 
\begin{lemma}[\cite{Laure}*{Theorem 1}]\label{lem:laure} There exists $\kappa_0=\kappa_0(\cU)>0$ such that, for
  any $\kappa\ge \kappa_0$ and any $n\in \bbN^*,$ 
\[
\cA_n\cap A=\emptyset.
\] 
\end{lemma}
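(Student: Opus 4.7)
The plan is to prove Lemma~\ref{lem:laure} via a single key estimate — a \emph{doubling lemma} — which generalises the Sollich--Evans doubling argument for the one-dimensional East model to the two-dimensional supercritical rooted setting, and which we then apply with budget $n-1$.

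\emph{Geometric setup.} From the rootedness hypothesis, pick non-opposite stable directions $u_1,u_2\in\stab(\cU)$ and set $R:=\max_{X\in\cU}\max_{y\in X}\|y\|_\infty$. Stability of $u_i$ gives, for every $X\in\cU$, the existence of some $y\in X$ with $\langle y,u_i\rangle\ge 0$. Applied for both $i$, this lets us attach to every newly created empty vertex $x_\star$ a ``trigger'' vertex in $x_\star+X$ sitting in a prescribed forward cone around a fixed direction $v=v(\cU)$ (in the interior of the cone generated by $u_1,u_2$), playing the role of ``west'' in the classical East argument. The \emph{two} stable directions are crucial: a single stable direction would not prevent the infection from bypassing a localised cluster of empty vertices from the side — precisely the failure mode distinguishing the unrooted class (polynomial infection time) from the rooted one.

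\emph{Doubling lemma.} The claim to prove by induction on $k\ge 0$ is the following: along any legal trajectory starting from $(\hat\o_{\L_n}\equiv 1,\hat\o_{\L_n^c}\equiv 0)$ and using at most $k$ simultaneous empty vertices in $\L_n$ throughout, every empty vertex $x$ ever appearing in $\L_n$ satisfies $\mathrm{dist}_v(x,\partial\L_n)\le CR\cdot 2^k$, where $\mathrm{dist}_v$ measures the depth of penetration in the $-v$ direction and $C=C(\cU)$ is a model-dependent constant. The base case $k=0$ is vacuous. For the inductive step, consider the first time $t_\star$ at which a new empty vertex $x_\star\in\L_n$ appears at the deepest-ever level $d$; the legal flip at $t_\star$ uses an update set $X+x_\star$ that is entirely empty at $t_\star-1$, and by the geometric setup some forward vertex $x_\star+y$ of $X+x_\star$ is empty at $t_\star-1$ with depth $\ge d-R$. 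At $t_\star-1$ at most $k-1$ vertices are empty in $\L_n$ (otherwise the flip creating $x_\star$ would violate the cap $k$); applying a peeling procedure that reduces the sub-trajectory up to $t_\star-1$ to one obeying the cap $k-1$ throughout, the recursive hypothesis yields $d-R\le CR\cdot 2^{k-1}$, hence $d\le CR\cdot 2^k$ for $C$ chosen large enough. Taking $k=n-1$ and $\kappa_0$ sufficiently large then gives $CR\cdot 2^{n-1}<\kappa n 2^n/2$, which is strictly less than the distance from the origin to $\partial\L_n$; thus $\o_0=0$ cannot occur — contradicting $\cA_n\cap A\ne\emptyset$.

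The principal obstacle is the \emph{peeling step}: reducing the sub-trajectory up to $t_\star-1$ to one respecting the cap $k-1$ \emph{throughout} rather than just at its final step. In the one-dimensional East setting the peeling is essentially automatic because empty sites are totally ordered. In two dimensions with multiple coexisting fronts and a general update family, one has to (i) fix a canonical total order on empty vertices that is monotone under legal flips and respects the $v$-direction, (ii) verify that removing the flip creating the deepest vertex, together with any auxiliary flips it enables, leaves a legal sub-trajectory, and (iii) use the \emph{second} stable direction to rule out ``side-door'' flips transverse to $v$ that would enable the deepest vertex without raising the cap. These technicalities, together with the determination of the constants $C$ and $\kappa_0$, constitute the content of \cite{Laure}.
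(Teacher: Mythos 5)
The paper does not prove Lemma~\ref{lem:laure}: it is quoted verbatim from \cite{Laure}*{Theorem 1}, so the only fair comparison is between your sketch and the actual argument there. Your high-level strategy --- an East-type doubling bound on the depth of penetration of the infection into $\L_n$, driven by the two non-opposite stable directions --- is indeed the right one in spirit, and your numerology (depth $\le CR\,2^{n-1}$ versus half-side $\kappa n 2^n/2$) is consistent with the choice of $|\L_n|=(\kappa n 2^n+1)^2$. But as written the proposal has two genuine gaps, and they are precisely where all the content of the lemma lives.

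First, the geometric setup does not deliver what you claim. Stability of $u_i$ gives, for each $X\in\cU$, \emph{some} $y\in X$ with $\langle y,u_1\rangle\ge 0$ and \emph{some} $y'\in X$ with $\langle y',u_2\rangle\ge 0$; nothing forces $y=y'$, so you do not get a single ``trigger'' vertex of $X+x_\star$ lying in the forward cone generated by $u_1,u_2$, and the reduction to a single progress direction $v$ is unjustified. (This is not cosmetic: handling the interplay between the two directions, rather than collapsing them to one, is a substantial part of \cite{Laure}.) Second, and more seriously, the inductive step is broken. From the cap $k$ you only learn that at the \emph{single} time $t_\star-1$ there are at most $k-1$ empty vertices in $\L_n$; the induction hypothesis needs a legal trajectory obeying the cap $k-1$ \emph{throughout} $[0,t_\star-1]$, and no such trajectory is exhibited. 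The ``peeling procedure'' that would produce it is asserted, not constructed, and even in the one-dimensional East model no naive truncation of the trajectory works --- the correct argument (Chung--Diaconis--Graham, \cite{CDG}) is a genuinely different combinatorial induction on where the empty sites sit, not a surgery on the path. Since you explicitly defer both points to \cite{Laure}, the proposal is an outline of the cited theorem rather than a proof of it.
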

Lemma \ref{lem:laure} implies that the
KCM process started from any configuration with no infection inside the region $\L_n,$ in
order to infect the origin  has to leave
the set $\cA_n$ by going through its boundary set $\partial\cA_n$ (see the proof below for a precise definition of this  set).
In turn, the latter is a subset of 
\[
\{\o\in \O:\ \exists  \text{
  at least $n-1$ infected vertices in $\L_n$}\}.
\]
We will therefore chose a scale $n$ such that $2^n \simeq 1/q^{\epsilon}$, namely w.h.p. w.r.t. the reversible
measure $\mu$ there are initially no infected vertices inside $\L_n$. Thus,  
starting from the (likely) event of  no infection inside the region $\L_n$, in
order to infect the origin the process has to
go through $\partial \cA_n$ which has an anomalous amount, $\Theta(\log q)$, of empty sites.
This mechanism, which in the physics jargon would correspond to "crossing  an energy
barrier" which grows  logarithmically in $q$, is at the root of the scaling $e^{\Theta(\log q)^2}$. 
Let us proceed to a proof of this result, namely to the proof of Theorem \ref{thm:rooted}.
\begin{proof}[Proof of Theorem \ref{thm:rooted}]
Fix $\epsilon<1/2$ and choose $n:=n(\epsilon,q)=\lfloor \epsilon\log_2(1/q)\rfloor$. Then let $$\phi(\cdot):=\phi_q(\cdot)=\id_{\cA_{\epsilon,q}}(\cdot)/\mu(\cA_{\epsilon,q})^{1/2}$$
where $\cA_{\epsilon,q}:=\cA_{n(\epsilon,q)}$ with $\cA_n$ defined in \eqref{eq:6} and the constant $\kappa$ that enters in this definition chosen larger than the value $\kappa_0$
of Lemma \ref{lem:laure}.
Then Lemma \ref{lem:laure} implies immediately that $\phi\in H_A$.
Moreover, using $\epsilon<1/2$ we get 
\[
\mu(\phi)=\mu(\cA_{\epsilon,q})^{1/2}\ge (1-q)^{|\L_n|/2}=1-o(1),
\]
because any configuration identically equal to one in $\L_n$ belongs
to $\cA_{\epsilon,q}$ and $2^{2n}=O(1/q^{2\epsilon})$. 
Finally, if 
\[
\partial \cA_{\epsilon,q}:=\{\o\in \cA_{\epsilon,q}\colon \exists \ x\in \L_n \text{ with }
c_x(\o)=1\text{ and } \o^x\notin \cA_{\epsilon,q}\},
\] 
one easily
checks (see e.g. \cite{CFM3}*{Section 3.5}) that 
\begin{gather*}
\cD(\phi) \le |\L_n|\mu\big(\partial \cA_{\epsilon,q}\big)/\mu(\cA_{\epsilon,q})\le  |\L_n|\mu\big(\exists\  n-1 \text{
  zeros  in $\L_n$}\big)/\mu(\cA_{\epsilon,q})\\
\le O(|\L_n|^n) q^{n-1}=e^{-\O((\log q)^2)},
\end{gather*}
Thus $\phi$ satisfies all the hypotheses of Corollary \ref{cor:basic:bound} and the result follows.
\end{proof}

\begin{remark} \label{rem:w.h.p.} 
In
\cite{Robsurvey}*{Conjecture 2.7} it was conjectured that  $\tau_0=e^{\Theta((\log q)^2)}$ w.h.p. as $q\to 0$ holds. Actually, we can also prove this stronger  result.  One bound immediately follows using Markov inequality and our result for the mean, Corollary \ref{cor:rooted}. The other bound follows
by using the fact that (i) the set $\cA_{\epsilon,q}$  has $\mu$-probability $1-o(1)$ (see the above proof of Theorem \ref{thm:rooted})
and (ii) the probability of infecting the origin before $e^{\Theta((\log q)^2)}$ starting in $\cA_{\epsilon,q}$ goes to zero as $q\downarrow 0$. The latter result is easily obtained by a union bound on times which yields that the probability to leave $ \cA_{\epsilon,q}$ before $e^{\Theta((\log q)^2)}$ (and therefore to infect the origin, thanks to Lemma \ref{lem:laure}), goes to zero.
\end{remark}

\section{The Duarte KCM}
In this section we analyse the mean infection time for the
Duarte KCM. For this model the update family $\cU$ consists of the
$2$-subsets of the North, South and West neighbours of the
origin \cite{Duarte}. The infection time for the Duarte bootstrap process is known to scale   as $e^{\Theta((\log q)^2/q)}$ \cite{Mountford} (see also  \cite{BCMS-Duarte} for the sharp value of the constant).
Concerning the Duarte KCM,
  in \cite{MMoT}*{Theorem 2} it was proved
that 
\[
 \trel(q,\cU)\le e^{O\big((\log q)^4/q^2\big)} \quad \text{as }q\to 0.
\]  
and, thanks to \eqref{eq:mean-infection}, the same result holds for $\bbE_\mu(\t_0)$.
Here we establish a matching lower bound.
\begin{theorem}
\label{thm:Duarte}
Consider the Duarte KCM. Then
\[
\bbE_\mu(\t_0)\ge e^{\O\big((\log q)^4/q^2\big)} \quad \text{as }q\to 0.
\]  
\end{theorem}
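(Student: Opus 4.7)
The plan is to apply Corollary \ref{cor:basic:bound} by constructing a proxy function adapted to Duarte, in direct analogy with the supercritical rooted proof of Theorem \ref{thm:rooted}, but with \emph{droplets} of length $\ell \asymp |\log q|/q$ playing the role that single empty sites played there. The effective density of a droplet at equilibrium is $q_{\mathrm{eff}} := q^{\ell} = e^{-\Theta((\log q)^2/q)}$, so the East-like logarithmic combinatorial bound on droplets should produce the target scaling $e^{\Omega((\log q_{\mathrm{eff}}))^2)} = e^{\Omega((\log q)^4/q^2)}$.

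First I would fix a suitable definition of a droplet, e.g.\ a maximal vertical interval of empty sites of length at least $\ell$ (with $\ell$ a constant multiple of $\lceil |\log q|/q \rceil$), and fix a scale $n \asymp \log_2(1/q_{\mathrm{eff}})$ together with a centred box $\Lambda_N$ whose side is of order $\kappa\, n\, 2^{n}$ measured in droplet units. Next, in analogy with \eqref{eq:6}, I would define the set $\cA$ of configurations $\omega$ with $\omega_{\Lambda_N^c}\equiv 0$ that can be reached from $(\hat\omega_{\Lambda_N}\equiv 1,\hat\omega_{\Lambda_N^c}\equiv 0)$ by a legal path along which no intermediate configuration contains more than $n-1$ droplets inside $\Lambda_N$, and take $\phi = \id_{\cA}/\mu(\cA)^{1/2}$. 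Granted the analogue of Lemma \ref{lem:laure} below, one then has $\phi\in H_A$; the choice of $\ell$ and $\Lambda_N$ gives $\mu(\cA)=1-o(1)$ since the typical configuration is droplet-free in $\Lambda_N$; and the Dirichlet form obeys the standard bound $\cD(\phi) \le |\Lambda_N|\,\mu(\partial\cA)/\mu(\cA) \le |\Lambda_N|^{O(1)}\, q_{\mathrm{eff}}^{\,n-1} = e^{-\Omega((\log q)^4/q^2)}$, after which Corollary \ref{cor:basic:bound} closes the argument.

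The main obstacle is the Duarte analogue of Lemma \ref{lem:laure}: in any legal path that starts from no droplet in $\Lambda_N$ and ends with the origin infected, some intermediate configuration must contain at least $n$ simultaneous droplets inside $\Lambda_N$. Unlike in \cite{Laure}, droplets here are extended, dynamical objects that can split, merge, shift, and be partially eroded, so the argument cannot simply track empty vertices. The approach I would take mirrors the heuristic described in the Introduction: (i) show that, under Duarte dynamics, creating a new droplet strictly east of the rightmost existing droplet essentially requires an East-type nucleation that uses the existing droplet as a source, because any candidate short empty column arising in between can be rapidly healed back using only the Duarte constraint and finite-volume $\cU$-bootstrap considerations in a strip; (ii) iterate this to obtain the East hierarchical doubling obstruction, namely that infecting a point at distance $d$ east of the nearest droplet forces $\lceil \log_2 d\rceil$ droplets to coexist. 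Applied with $d$ of order $2^n$ (the east extent of $\Lambda_N$), this produces the required $n$ simultaneous droplets.

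The hard part is clearly item (i), i.e.\ the quantitative stability of the droplet landscape: one must identify a monovariant (a kind of rightmost-droplet position or a weighted droplet count) that controls legal paths without being too fragile under the complicated local moves allowed by the $\{\{N,S\},\{N,W\},\{S,W\}\}$ rule. Once such a monovariant is in hand the logarithmic lower bound is a routine East-style hierarchical argument, and the rest of the proof is the short computation sketched above.
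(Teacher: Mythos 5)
Your overall strategy is the right one and matches the paper's at the heuristic level: a proxy function fed into Corollary \ref{cor:basic:bound}, with droplets of length $\ell\asymp|\log q|/q$ of effective density $q_{\mathrm{eff}}=q^{\ell}$ undergoing an East-like motion, and the Chung--Diaconis--Graham logarithmic obstruction applied at scale $\log_2(1/q_{\mathrm{eff}})$. But the proposal leaves unproven precisely the two components that constitute the actual proof. First, your rigid definition of a droplet (a maximal empty vertical interval of length $\ge\ell$) does not satisfy the East property you need in step (i): an empty interval of length $\ell-1$ is not a droplet, yet a single legal flip can turn it into one with no droplet anywhere to its left, so no monovariant of the type you describe exists for these objects, and the hierarchical doubling argument breaks down at the first step. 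The paper's resolution is not a patch but a different object altogether: droplets are defined through the recursive algorithm of Definition \ref{def:algo}, where column $k$ carries an up-arrow if it contains an interval of length $\ell$ that is \emph{infectable by the finite-volume Duarte bootstrap process} after the previously found droplets have been healed. Making the arrow dynamics East-like then requires the screening and monotonicity properties of the bootstrap map (Lemmas \ref{lem:screening} and \ref{lem:monotonicity}), the single-flip stability statements of Lemma \ref{lem:D2} and Corollary \ref{cor:D1}, and crucially a second forbidden event $\cB_2$ bounding the range of a droplet (Lemma \ref{lem:D0}), without which one flip could toggle arrows arbitrarily far away and Claim \ref{claim:20} would fail. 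None of this is supplied or even sketched by your ``monovariant'' placeholder.

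Second, your bound $\cD(\phi)\le|\Lambda_N|^{O(1)}q_{\mathrm{eff}}^{\,n-1}$ and the claim $\mu(\cA)=1-o(1)$ both presuppose that a droplet costs probability $q^{\Theta(\ell)}$. With the rigid definition this is trivial but useless (see above); with the infectability-based definition it is a genuine theorem, namely Lemma \ref{lem:D4}, whose proof in the paper goes through a comparison with the Duarte bootstrap critical length and invokes the sharp threshold of Bollob\'as--Duminil-Copin--Morris--Smith: if infectable intervals of length $\ell$ were too likely, the bootstrap infection time of the origin would be smaller than its known lower bound $e^{(1-o(1))(\log q)^2/8q}$. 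This input is indispensable --- it is the only place where the value $\ell\asymp|\log q|/q$ (rather than, say, $1/q$) is certified to give density $e^{-\Omega((\log q)^2/q)}$ --- and it is absent from your outline. As written, the proposal is an accurate restatement of the difficulty together with the correct target, but the proof of the theorem is exactly the content you have deferred.
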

Using \eqref{eq:mean-infection}, Theorem \ref{thm:Duarte} and
\cite{MMoT}*{Theorem 2} we get immediately the following
corollary.
\begin{corollary} For the Duarte KCM it holds
  \[
\trel(q,\cU)= e^{\Theta\big((\log q)^4/q^2\big)} \quad \text{as }q\to 0 .
\]
and the same result for $\bbE_\mu(\t_0)$.
\end{corollary}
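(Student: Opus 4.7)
The strategy mirrors the proof of Theorem \ref{thm:rooted}: apply Corollary \ref{cor:basic:bound} with a proxy function of the form $\phi_q = \id_{\cA}/\mu(\cA)^{1/2}$ for a cleverly chosen event $\cA \subset \Omega$. What changes is the geometry of the "energy barrier." The basic physical picture recalled in the introduction says that the correct Duarte analogue of a single empty site (the East droplet) is a vertical interval of $\ell \sim |\log q|/q$ consecutive empties, which is the critical bootstrap droplet for this update family: shorter intervals heal too easily, while longer ones can propagate infection eastward two-by-two (since two empty vertical neighbors satisfy the Duarte rule). Such a droplet has equilibrium density $q_{\text{eff}}=q^\ell = e^{-\Theta((\log q)^2/q)}$, and an East-like logarithmic barrier on the droplet scale produces cost $q_{\text{eff}}^{\Theta(\log(1/q_{\text{eff}}))} = \exp(-\Theta((\log q)^4/q^2))$, which is exactly the target.

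The plan is therefore as follows. Fix a small constant $c$ and set $\ell := \lceil c|\log q|/q\rceil$, $n := \lfloor c'(\log q)^2/q\rfloor$, and $\Lambda_n$ a box centred at the origin whose side grows like $\kappa n2^n$ (analogous to the rooted case, but now on the droplet scale). Call a vertical interval of $\ell$ empties a \emph{droplet}, and define
\[
\cA_n = \Bigl\{\omega \in \Omega : (\omega_{\Lambda_n},\0_{\Lambda_n^c}) \text{ is reachable from } (\1_{\Lambda_n},\0_{\Lambda_n^c}) \text{ by a legal Duarte path } \gamma \text{ along which } \#\{\text{droplets in } \Lambda_n\} \le n-1 \Bigr\}.
\]
Proving Theorem \ref{thm:Duarte} via Corollary \ref{cor:basic:bound} then reduces to three estimates: (a) $\cA_n \cap A = \emptyset$, so that $\phi \in H_A$; (b) $\mu(\cA_n) = 1-o(1)$; (c) $\cD(\phi) \le e^{-\Omega((\log q)^4/q^2)}$. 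Point (b) follows from a union bound: the probability that $\Lambda_n$ contains even one droplet is at most $|\Lambda_n|\, q^\ell = o(1)$ for a suitable choice of $c,c'$. Point (c) follows from the standard bound $\cD(\phi) \le |\Lambda_n|\,\mu(\partial\cA_n)/\mu(\cA_n)$ used in the supercritical rooted proof, together with the observation that any configuration in $\partial\cA_n$ contains at least $n-1$ droplets, and hence at least $(n-1)\ell$ empty sites inside $\Lambda_n$, giving $\mu(\partial\cA_n) \le \exp(-\Omega((\log q)^4/q^2))$ after a combinatorial prefactor that is negligible compared to this exponential.

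The hard part is (a), the Duarte analogue of Lemma \ref{lem:laure}. This is a purely combinatorial statement about legal Duarte paths and has no ready-made reference in \cite{Laure}, since the result there only covers the supercritical rooted (and higher-dimensional non-supercritical-unrooted) classes. I would prove it by a hierarchical/renormalised East-type argument: one should construct a "droplet hierarchy" analogous to the East hierarchy of \cite{SE1,CDG}*{Fact 1} but working at the coarse-grained level of droplets rather than single empties. Concretely, track the westernmost column $-k$ of $\Lambda_n$ that has ever become infected along the path, and show inductively that to decrease $k$ by advancing the infection one droplet-length further west the configuration must simultaneously support a pair of droplets (one above and one below the row of the freshly infected site), because Duarte needs two vertical neighbors empty to flip a westward site. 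Iterating this doubling over the $\log_2$ of the horizontal extent of $\Lambda_n$, and choosing the geometry of $\Lambda_n$ exactly so that this log is $\Theta(n)$, yields the required $n$ simultaneous droplets before the origin can be reached.

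The main obstacle is controlling the non-rigidity of droplets: unlike in East where the droplets are single sites and the hierarchy is literal, Duarte droplets can drift, split, merge and partially heal along the path, so the inductive counting has to be carried out against a notion of "effective droplet" robust enough to survive these deformations. This is essentially the new combinatorial content of the theorem, and most of the proof's length will be devoted to setting up that notion precisely and verifying the doubling step. Once this is done, the remaining measure-theoretic estimates (b) and (c) are routine, and the application of Corollary \ref{cor:basic:bound} gives $\bbE_\mu(\tau_0) = \Omega(\mu(\phi_q)^4/\cD(\phi_q)) \ge e^{\Omega((\log q)^4/q^2)}$.
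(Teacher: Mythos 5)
Your high-level plan (variational bound, droplets of $\ell \sim |\log q|/q$ empties, East-type logarithmic barrier on the droplet scale) is the right heuristic and matches the paper's introduction, but there is a genuine gap exactly where you flag it, and the gap infects not only your step (a) but also (b) and (c). The problem is that you define a droplet as a \emph{rigid} empty vertical interval of length $\ell$ and count such intervals along the path. With that definition the combinatorial claim (a) is false as stated: the Duarte dynamics can infect the origin without ever holding $n$ rigid empty intervals of length $\ell$ simultaneously, because infection in a column can be assembled by the bootstrap mechanism from sparse empty sites spread over many columns to its left (this is precisely the content of Lemma \ref{lem:propagation} and Corollary \ref{cor:duarte-path}). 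If instead you weaken ``droplet'' to ``column in which an interval of length $\ell$ is infectable,'' then (a) becomes plausible but your estimates (b) and (c) break: a configuration on the boundary of $\cA_n$ need not contain $(n-1)\ell$ empty sites, and bounding the probability that a column is infectable to length $\ell$ is no longer a union bound over rigid intervals --- it is the bootstrap-percolation estimate of Lemma \ref{lem:D4}, whose proof requires the sharp threshold $T(\cU)\ge e^{(1-o(1))(\log q)^2/8q}$ for the Duarte bootstrap process from \cite{BCMS-Duarte}.

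The paper resolves the non-rigidity problem with machinery your sketch does not contain: an algorithmic, column-by-column definition of droplets via the restricted process $DB_V^\tau$ on a triangular region $V$ of $N=\lfloor e^{\e(\log q)^2/q}\rfloor$ columns (Definition \ref{def:algo}), producing arrow variables $\Phi(\o)\in\{\downarrow,\uparrow\}^N$ whose droplets occur disjointly (Lemma \ref{lem:D1}); a \emph{second} forbidden event $\cB_2$ (long infectable Duarte paths across $\downarrow$-columns), needed to bound the \emph{range} of a droplet (Lemma \ref{lem:D0}) --- without it a single spin flip could alter arrows arbitrarily far to its right and the East projection would fail; and a coarse-graining of the columns into blocks of size $4n_1n_2$ on which the arrows provably perform an East-like motion (Lemma \ref{lem:D2}, Claim \ref{claim:20}), so that the impossibility of the path follows from the known East combinatorial result of \cite{CDG} rather than from a new doubling argument. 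The part you defer (``setting up that notion precisely and verifying the doubling step'') is therefore not a routine completion: it is the bulk of the proof, and the rigid-droplet formulation you propose to start from would have to be abandoned, not merely refined.
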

Our result provides the first example of critical $\alpha$-rooted KCM for which the conjecture for the divergence of time scales that we put forward in \cite{MMoT}*{Conjecture 3 (a)} together with R. Morris
can be proven. Indeed, as explained in \cite{MMoT}, the Duarte model is a $1$-rooted model and the exponent $2$ that we obtain is in agreement with \cite{MMoT}*{Conjecture 3 (a)}.
In order to prove Theorem \ref{thm:Duarte} 
we will  start  by the variational lower bound of Section \ref{sec:general}, as for the supercritical rooted class.
However, defining the analog of
the set $\cA_n$  together with the test
function $\phi$ satisfying the hypotheses of Corollary \ref{cor:basic:bound} is much more
involved and it requires a subtle algorithmic construction.
Before explaining our construction it is useful to make some
simple observations on how infection propagates in the Duarte
bootstrap process. 
\subsection{Preliminary tools : the Duarte bootstrap process}
\label{sec:tools}

\begin{figure}[h]
\centering
\includegraphics[scale=0.3]{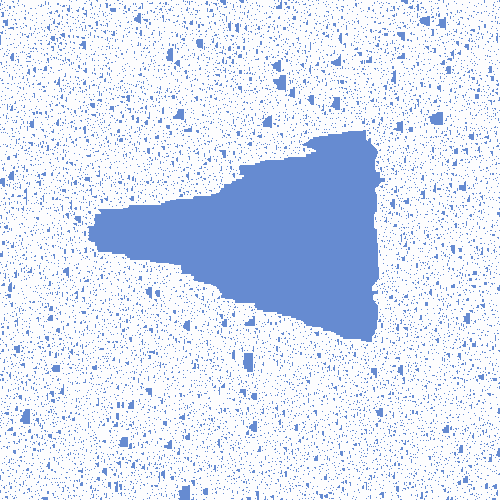}
\caption{A growing droplet under the Duarte bootstrap process (courtesy
  of P. Smith).}
\end{figure}

Let $\vec e_1,\vec e_2$ denote the basis vectors in $\bbR^2.$ Given $\L\subset \bbZ^2$ we write $\partial
\L:=\partial_\parallel\L\cup \partial_\perp\L,$ where
\begin{align*}
\partial_\parallel\L&=\{y\in \L^c \colon y+\vec e_1\in \L\},\\ 
\partial_\perp\L &=\{y\in \L^c \colon \{y+\vec e_2,y-\vec e_2\}\cap
                   \L\neq \emptyset\}.
\end{align*}
A configuration $\t\in \{0,1\}^{\partial \L}$ will be referred to as a
\emph{boundary condition} and we shall write it as
$\t=(\t_{\parallel}, \t_{\perp} )$, where
$\t_{\parallel}:= \t\restriction_{\partial_\parallel\L} $ and similarly for $\t_{\perp}$.
\begin{definition}
\label{def:fin-vol BP}
Given a boundary condition $\t$ and 
$Y\subseteq \L$, let 
\begin{equation*}
Y^\t(t+1) = Y^\t(t) \cup \big\{ x \in \L\,:\, X + x \subseteq Y^\t(t)
\text{ for some } X \in \cU \big\}\quad t\ge 0,
\end{equation*}
where $Y^\t(0)=Y\cup \{x\in \partial\L\colon \t_x=0\}$.
We call the process $Y^\t(t), t\in \bbN,$ the Duarte
bootstrap process in $\L$ with $\t$ boundary condition (for shortness the $DB_\L^\t$-process), and we shall write $[Y]_{\L}^{\t}$ for
$(\bigcup_{t \ge 0}Y^\t(t))\cap \L$. Recall also (see Section \ref{sec:models}) that $[Y]$ is the analogous quantity for the bootstrap process evolving on $\bbZ^2$.
\end{definition}
\begin{remark}
Notice that for the $DB_\L^\t$-process the boundary condition $\t$ does
not change in time. 
\end{remark}
\noindent
{\bf Notation warning.} If $\t\equiv 0$ or $\t\equiv 1$ we shall simply replace
it by a $0$ or a $1$ in our notation. If instead $\t$ is such that
$\t_\parallel\equiv 1$ and $\t_\perp\equiv 0$ then it will be replaced
by a $1,0$ in the notation.   
\begin{lemma}[Screening property]
\label{lem:screening}
Consider a sequence of sites $S:=\{(i,b_i)\}_{i=1}^n$ in $\bbZ^2$ with
$b_{i+1}\le b_i$ for all $i\in
[n-1],$ and let
\[
S_+=\{(i,j)\in \bbZ^2\colon i\in[n], j>b_i\},\quad
S_-=\{(i,j)\in \bbZ^2\colon i\in[n], j<b_i\}.
\] 
Let $Y,Y'$ be two arbitrary subsets of $\bbZ^2$ such that $Y\supseteq
S$ and  $Y\cap S^c_+=Y'\cap S^c_+.$
Then $[Y]\cap S_-=[Y']\cap S_-$. Similarly if we assume that
$b_{i+1}\ge b_i$ for all $i\in
[n-1]$ and we exchange the role of $S_+$ and $S_-$.
\end{lemma}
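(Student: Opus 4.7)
The plan is to prove the screening property by a straightforward induction on time, exploiting the fact that the Duarte update family $\{N,S,W\text{-pairs}\}$ only propagates infection from column $i-1$ to column $i$, never from $i+1$ to $i$. The key geometric observation is that the ``downward'' region $S_-$ together with the staircase $S$ itself and everything to the left of column $1$ forms a region that is closed under the Duarte neighborhood, provided we ``give ourselves $S$ for free''.

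Concretely, first I would introduce the auxiliary set
\[
R := \bigl(\{(i,j)\colon i\le 0\}\bigr)\cup S\cup S_-,
\]
which lies inside $S_+^c$, and check the closure property: for every $x=(i,j)\in R\setminus S$, all three Duarte neighbors $(i,j+1)$, $(i,j-1)$, $(i-1,j)$ again belong to $R$. The only non-obvious case is $x\in S_-$: the North neighbor $(i,j+1)$ has height $\le b_i$, so it lies in $S\cup S_-$; the South neighbor is trivially in $S_-$; and the West neighbor $(i-1,j)$ lies in $R$ either because $i-1\le 0$, or because $i-1\in[n]$ and, using the monotonicity $b_{i-1}\ge b_i>j$, it lies in $S_-$. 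Sites of $S$ itself can in principle have a North neighbor inside $S_+$, but this is harmless because $S\subseteq Y\cap Y'$ by assumption, so no rule ever needs to be invoked at such sites.

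With this in hand, I would prove by induction on $t\ge 0$ the identity
\[
Y^{Y}(t)\cap R \;=\; Y^{Y'}(t)\cap R,
\]
where $Y^Z(t)$ denotes the $\cU$-bootstrap process started from $Z$. The base case $t=0$ follows from the hypothesis $Y\cap S_+^c=Y'\cap S_+^c$ and $R\subseteq S_+^c$. For the inductive step, a site $x\in S$ is in both processes at every time by the assumption $Y\supseteq S$ (which, since $S\subseteq S_+^c$, also forces $Y'\supseteq S$); for $x\in R\setminus S$ the Duarte rule only reads neighbors inside $R$, so the inductive hypothesis immediately transfers. Passing to the limit $t\to\infty$ yields $[Y]\cap R=[Y']\cap R$, and restricting to $S_-\subseteq R$ gives the desired equality. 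The ``increasing staircase'' version $b_{i+1}\ge b_i$ then follows by applying the just-proved statement to the vertically reflected sequence $\{(i,-b_i)\}$, since the Duarte rule is invariant under $\vec e_2\mapsto-\vec e_2$ and this reflection swaps $S_+$ with $S_-$.

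I do not foresee a genuine obstacle; the only subtlety is making sure the non-increasing hypothesis is used precisely where it is needed (in checking that the West neighbor of a point of $S_-$ lies in $S_-$ rather than in $S_+$), and that the role of $Y\supseteq S$ is exactly to protect us from the one case where the closure of $R$ under Duarte neighbors fails, namely at the staircase points themselves.
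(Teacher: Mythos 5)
Your proof is correct and rests on exactly the same geometric observation as the paper's: the monotonicity $b_{i-1}\ge b_i$ forces the N, S, W neighbours of any point of $S_-$ to lie in $S_-\cup S\cup\{(a,b)\colon a\le 0\}$, while $Y\supseteq S$ (hence $Y'\supseteq S$) protects the staircase itself. The only difference is presentational — you run a forward induction over the explicitly closed region $R$, whereas the paper argues by contradiction from the first time a discrepancy appears in $S_-$ — so the two arguments are essentially identical.
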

\begin{proof}
We refer to Figure \ref{fig:3bis} for a visualisation of the geometric
setting. Let $Y,Y'$ be as in the statement and observe that $Y(s)$
and $Y'(s)$ coincide in $\{v\in \bbZ^2\colon v=(a,b),\ a\le 0\}$ for
all $s\in \bbN^*$.
Let $t\in \bbN^*$ be the first time at which there exists $y\in S_-$ such
that either $y\in Y'(t)$ and $y\notin
Y(t)$ or viceversa. W.l.o.g we assume the first case. By construction there exists $z\in \{y\pm \vec e_2,y-\vec
e_1\}$ such that $z\in Y'(t-1)$ and $z\notin Y(t-1)$. Clearly $z$
cannot be of the form $z=(0,b)$ and therefore 
$z \in S_-\cup S$ because $y\in S_-$. Because of the definition of $t$,
$z\notin S_-$ and $z\notin S$ because $S\subseteq Y(s)$ and
$S\subseteq Y'(s)$ for all $s\in \bbN^*.$ 
\begin{figure}[ht]
  \centering
\begin{tikzpicture}[>=latex,scale=0.3]
[x=0.5cm, y=0.5cm]
\begin{scope}
\draw [help lines] (0,0) grid (9,12);
\foreach \i in {2,...,5}{%
\pgfmathparse{round(10-2*(\i^2/(3.2*\i+5))) }
       \let\theIntINeed\pgfmathresult
      \fill (\i-1,\theIntINeed) circle (3pt);
      \fill (\i+4,\theIntINeed-5) circle (3pt);
}
\draw [fill=gray, opacity=0.3] (1,8)--(2,8)--(2,7)--(4,7)--(4,6)--(5,6)--(5,3)--(7,3)--(7,2)--(9,2)--(9,0)--(1,0)--(1,8);
\fill (5,7) circle (3pt);
\node at (2.5,4.5) {$S_-$};
\node at (6.5,9.5) {$S_+$};
\node at (-0.5,9) {$b_1$};
\node at (10,3) {$b_n$};
\node at (1,-1) {$1$};
\node at (9,-1) {$n$};
\node at (4.5,11) {$\tiny x$};
\fill (5,11) circle (2pt);
\fill (0,0) circle (1.5pt);
\draw [fill=gray, opacity=0.3] (1,10)--(3,10)--(3,9)--(5,9)--(5,8)--(6,8)--(6,5)--(8,5)--(8,4)--(9,4)--(9,12)--(1,12)--(1,10);
\end{scope}
\end{tikzpicture}
\caption{The set $S$ (black dots) and the sets $S_\pm$ (shaded
  regions). If the two initial sets $Y,Y'$ of infection contain $S$
  and differ at
  exactly the vertex $x$, it is clear that the initial discrepancy cannot influence the final infection in $S_-$. }
\label{fig:3bis}
\end{figure}
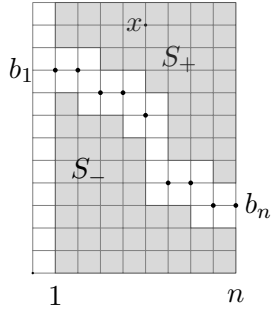
\end{proof}
\begin{lemma}[Monotonicity]
\label{lem:monotonicity}
 Let $\L\subseteq \L'$ be subsets of $\bbZ^2.$ 
\begin{enumerate}[(A)]
  \item Let
 $\t,\t'\in \{0,1\}^{\partial \L}$. If $\t_x\le \t'_x$ for all $x\in \partial\L$
then 
\[
[Y]_{\L}^{\t'}\subseteq [Y]_{\L}^{\t},\quad \forall \ Y\subseteq \L.
\]
\item For all $Y'\subseteq \L'$
\[
[Y']_{\L'}^{0}\cap \L \subseteq [Y'\cap \L]_{\L}^0
\quad\text{and}\quad [Y']_{\L'}^1\cap \L  \supseteq [Y'\cap
\L]^1_{\L}.
\]
\item Suppose that $\L$ and $\L'$ are such that $\partial_\perp
  \L\subseteq \partial_\perp
  \L'$. Then for all $Y'\subseteq \L'$
\[
[Y'\cap \L]_{\L}^{1,0}\subseteq [Y']_{\L'}^{1,0}\cap \L.
\] 
  \end{enumerate}
\end{lemma}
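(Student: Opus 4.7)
All three parts will be established by routine inductions on the bootstrap time $t$, comparing the iterates $Y(t)$ of the two Duarte processes involved in each inclusion. For part (A), the pointwise inequality $\tau \le \tau'$ gives $\{x : \tau'_x = 0\} \subseteq \{x : \tau_x = 0\}$, so $Y^{\tau'}(0) \subseteq Y^\tau(0)$; monotonicity of the update rule then yields $Y^{\tau'}(t) \subseteq Y^\tau(t)$ for all $t \ge 0$, and intersecting with $\Lambda$ and unioning over $t$ concludes.

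For part (B), let $W(t) := (Y' \cap \Lambda)^\tau_\Lambda(t)$ and $Z(t) := (Y')^\tau_{\Lambda'}(t)$. For the $\tau = 0$ inclusion I would prove $Z(t) \cap \Lambda \subseteq W(t)$ by induction on $t$: at $t = 0$ this reduces to $Y' \cap \Lambda \subseteq W(0)$ since $\partial \Lambda'$ is disjoint from $\Lambda$, and for the inductive step any new infection $x \in Z(t+1) \cap \Lambda$ comes from some update $X + x \subseteq Z(t)$ which lies inside $\Lambda \cup \partial \Lambda$, because the Duarte update involves only the neighbors $-\vec e_1, \pm \vec e_2$ of $x$; the $\Lambda$-part of $X + x$ is then in $W(t)$ by the inductive hypothesis, while the $\partial \Lambda$-part is in $W(0) \subseteq W(t)$ since the boundary is fully infected. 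The $\tau = 1$ inclusion $W(t) \subseteq Z(t)$ is symmetric and even easier: $W(0) = Y' \cap \Lambda \subseteq Y' = Z(0)$, and any legal $W$-update at a site $x \in \Lambda \subseteq \Lambda'$ is also a legal $Z$-update.

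Part (C) is the most delicate and exploits the directional structure of the Duarte update. The key observation is that for the mixed boundary $(1, 0)$ we have $W(t) \cap \Lambda^c = W(0) \cap \Lambda^c = \partial_\perp \Lambda$ at every time $t$, because parallel boundary sites are healthy ($\tau_\parallel \equiv 1$) and boundary sites are never updated; the hypothesis $\partial_\perp \Lambda \subseteq \partial_\perp \Lambda'$ then gives $W(t) \cap \Lambda^c \subseteq Z(0) \subseteq Z(t)$. To propagate the inclusion into $\Lambda$ by induction, consider a new infection $x \in \Lambda$ of $W$ triggered by $X + x \subseteq W(t)$. For each $y \in X + x$, either $y \in \Lambda$ (and $y \in Z(t)$ by the inductive hypothesis) or $y \in \partial \Lambda$; in the latter case, the Duarte geometry forces $y \in \partial_\parallel \Lambda$ when $y = x - \vec e_1$, but this is incompatible with $y \in W(t)$ (parallel boundary sites are healthy and never join $W$), so necessarily $y = x \pm \vec e_2 \in \partial_\perp \Lambda \subseteq Z(t)$. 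Hence $X + x \subseteq Z(t)$ and $x \in Z(t+1) \cap \Lambda$, closing the induction.

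The only mild obstacle lies in part (C), where the parallel/perpendicular decomposition of $\partial \Lambda$ must be correctly aligned with the directional structure of the Duarte update, and the hypothesis $\partial_\perp \Lambda \subseteq \partial_\perp \Lambda'$ must be used precisely to transfer the infected perpendicular boundary from $\Lambda$ to $\Lambda'$; parts (A) and (B) are purely routine bookkeeping with the monotone Duarte update rule.
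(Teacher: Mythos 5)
Your proof is correct and follows essentially the same route as the paper's: all three parts are elementary monotonicity arguments on the bootstrap iterates, using that the locality of the Duarte neighbourhood confines any external sites an update can see to $\partial\Lambda$, and that under the $(1,0)$ boundary condition the parallel boundary stays healthy while the perpendicular boundary transfers via $\partial_\perp\Lambda\subseteq\partial_\perp\Lambda'$. The paper merely packages part (B) via an auxiliary fully-infected set $Z=(Y'\cap\Lambda)\cup(\Lambda'\setminus\Lambda)$ and part (C) as a two-step comparison, which are terser versions of the inductions you carry out explicitly.
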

\begin{proof}\ 
  \begin{enumerate}[(A)]
\item It follows immediately from the fact that the
$DB_\L^\t$-process runs with more initial infection than the
$DB_\L^{\t'}$-process. 
\item To prove the first inclusion 
let $Z= (Y'\cap \L)\cup (\L'\setminus \L)$. Clearly $[Y']_{\L'}^0
\subseteq [Z]_{\L'}^0$ because $Y'\subseteq Z$.
It is now sufficient
to observe that, by definition, 
\[
[Z]_{\L'}^0\cap \L= [Y'\cap \L]_{\L}^0.
\]
Similarly
one proceeds for the second inclusion with $Z=Y'\cap \L$.
\item Clearly $[Y'\cap \L]_{\L'}^{1,0}
\subseteq [Y']_{\L'}^{1,0}.$ 
We claim that 
\[
[Y'\cap \L]_{\L'}^{1,0}\cap \L \supseteq [Y'\cap \L]_{\L}^{1,0}.
\]
That follows immediately from the assumption that
$\partial_\perp\L'\supseteq \partial_\perp\L$ and the fact that the
vertices of $\partial_\parallel\L\cap \L'$ (if any) are constrained to be healthy
for all times under  the $DB_\L^{1,0}$-process while they are
unconstrained for the $DB_{\L'}^{1,0}$-process.
\end{enumerate}
\end{proof}
\begin{lemma}[Propagation of infection]
\label{lem:propagation}
Let $I$ be a vertical interval, \ie $I=\{a,a+\vec e_2,\dots,a+n\vec
e_2\}, a\in \bbZ^2$, and let $v=x+\vec e_1$ for some $x\in
I$. Suppose that $I\cup \{v\}\subseteq [Y]$ where $Y$ is the initial set of
infection. Then $I+\vec e_1\subseteq  [Y]$. In particular, if $[Y]$ contains $[n]\times \{1\}$ and
$\{1\}\times [m]$ then
$[n]\times [m]\subseteq [Y]$. 
\end{lemma}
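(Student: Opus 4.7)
The plan is to apply the Duarte update rule in a purely local, inductive fashion, using that $\cU$ consists of all $2$-subsets of the West, North and South neighbours of the origin. The key observation is that any site whose West neighbour already belongs to $[Y]$ becomes infected as soon as either its North or its South neighbour is in $[Y]$, via the rule $\{-\vec e_1,\vec e_2\}$ or $\{-\vec e_1,-\vec e_2\}$ respectively.

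First I would translate so that $I=\{0\}\times\{0,1,\ldots,n\}$ and $v=(1,k)$ for some $k\in\{0,\ldots,n\}$. Then the claim $I+\vec e_1\subseteq [Y]$ is proved by a two-sided induction outward from $v$: for $j=1,2,\ldots,k$, the site $(1,k-j)$ has its West neighbour $(0,k-j)\in I\subseteq[Y]$ and its North neighbour $(1,k-j+1)$ in $[Y]$ by the inductive hypothesis, so the update rule $\{-\vec e_1,\vec e_2\}$ fires and $(1,k-j)\in [Y]$; symmetrically, for $j=1,\ldots,n-k$, the site $(1,k+j)$ is infected using the rule $\{-\vec e_1,-\vec e_2\}$. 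This exhausts $I+\vec e_1$.

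For the ``In particular'' part I would iterate the first claim column by column. Proceeding by induction on $i\in\{1,\ldots,n\}$, assume $\{i\}\times[m]\subseteq[Y]$ (given for $i=1$). Apply the first claim with the vertical interval $I_i=\{i\}\times[m]$ and the vertex $v_i=(i+1,1)$: since $v_i\in [n]\times\{1\}\subseteq [Y]$ and $v_i=(i,1)+\vec e_1$ with $(i,1)\in I_i$, the hypotheses of the first claim are met, and we conclude $I_i+\vec e_1=\{i+1\}\times[m]\subseteq[Y]$. This completes the induction and yields $[n]\times[m]\subseteq[Y]$.

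I do not expect any serious obstacle: the argument is a bookkeeping exercise on the two Duarte rules containing the West neighbour. The only point worth emphasising is that, because $v$ can lie anywhere inside $I$, the propagation along $I+\vec e_1$ has to be carried out in both vertical directions starting from $v$, which is precisely why both rules $\{-\vec e_1,\vec e_2\}$ and $\{-\vec e_1,-\vec e_2\}$ enter the argument.
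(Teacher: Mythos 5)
Your proof is correct, and the paper in fact omits any proof of this lemma, treating it as an evident consequence of the two Duarte rules containing the West neighbour; your two-sided induction from $v$ along $I+\vec e_1$, followed by the column-by-column iteration, is exactly the verification the authors leave implicit. The one point you rightly handle (and which is worth stating, as you do) is that the hypothesis is on $[Y]$ rather than $Y$, so the iteration in the second part is legitimate because $[[Y]]=[Y]$.
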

As a corollary of the
above simple property, let $x,y\in \bbZ^2$ and suppose that there
exists a \emph{Duarte path} $\G$ between $x$
and $y,$ \ie $\Gamma:=(x^{(1)},\dots,
x^{(n)})\subseteq \bbZ^2$ with $x^{(1)}=x,\ x^{(n)}=y$ and
$x^{(i+1)}-x^{(i)}\in \{\vec e_1, \pm
  \vec e_2\}\ \forall i\in [n-1].$  Let also $I_{\G}$ be the horizontal
  interval starting at $x$ and reaching the vertical line through $y$ (see Figure \ref{fig:3}).
\begin{corollary}
\label{cor:duarte-path}  
Suppose that $\G\subseteq [Y]$. Then $I_{\G}\subseteq
  [Y].$
\end{corollary}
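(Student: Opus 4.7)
The plan is to iterate Lemma \ref{lem:propagation} column by column along $\G$. Write $\G=(x^{(1)},\dots,x^{(n)})$ with $x^{(i)}=(a_i,b_i)$, so that $x=x^{(1)}$, $y=x^{(n)}$, and the assumption $x^{(i+1)}-x^{(i)}\in\{\vec e_1,\pm\vec e_2\}$ means the column coordinate is non-decreasing and changes by at most one at each step. The set of columns visited is therefore exactly $\{a_1,a_1+1,\dots,a_n\}$, and within each such column $j$ the restriction of $\G$ is a $\pm\vec e_2$ walk, hence fills the whole vertical interval
\[
V_j:=\{j\}\times[\min C_j,\max C_j],\qquad C_j:=\{b_i:a_i=j\}.
\]
Since $\G\subseteq [Y]$, each $V_j\subseteq [Y]$ as well.

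Next, I would prove by induction on $j\in\{a_1,\dots,a_n\}$ that the enlarged vertical interval
\[
W_j:=\{j\}\times\Big[\min_{a_1\le k\le j}\min C_k,\ \max_{a_1\le k\le j}\max C_k\Big]
\]
lies in $[Y]$. The base case $j=a_1$ is just $W_{a_1}=V_{a_1}$. For the inductive step, let $b_j^*$ denote the row at which $\G$ crosses from column $j$ to column $j+1$; then $b_j^*\in[\min C_j,\max C_j]\cap[\min C_{j+1},\max C_{j+1}]$, so in particular $(j,b_j^*)\in W_j$ and $(j+1,b_j^*)\in V_{j+1}\subseteq [Y]$. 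Applying Lemma \ref{lem:propagation} to the vertical interval $W_j$ with east neighbour $(j+1,b_j^*)$ gives $W_j+\vec e_1\subseteq[Y]$. Taking the union with $V_{j+1}$, which overlaps $W_j+\vec e_1$ at row $b_j^*$, produces exactly the next vertical interval $W_{j+1}$, completing the step.

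The conclusion is then immediate: the starting row $b_1$ lies in $[\min C_{a_1},\max C_{a_1}]\subseteq W_j$ for every $j\in[a_1,a_n]$, hence $(j,b_1)\in[Y]$ for every such $j$, which is exactly $I_\G\subseteq[Y]$. The only subtle point is that Lemma \ref{lem:propagation} demands a \emph{connected} vertical interval, so one cannot propagate each $V_j$ in isolation and reunite at the end; the correct object to carry through the induction is the monotonically growing $W_j$, and the overlap at the transition row $b_j^*$ is precisely what keeps $(W_j+\vec e_1)\cup V_{j+1}$ a single vertical interval at each step, which is what makes the iteration close.
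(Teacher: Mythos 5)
Your argument is correct and is exactly the argument the paper leaves implicit: the corollary is stated without a written proof, with Figure \ref{fig:3} depicting precisely your growing intervals $W_j$ (the shaded rectangles) obtained by iterating Lemma \ref{lem:propagation} column by column and absorbing the new vertical stretch of $\G$ at each transition row. Your observation about carrying the merged interval $W_j$ through the induction, rather than propagating each $V_j$ separately, is the right way to make the figure's ``clearly'' rigorous.
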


\begin{figure}[ht]
  \centering
\begin{tikzpicture}[>=latex,scale=0.6]
[x=0.5cm, y=0.5cm]
\begin{scope}
\draw [help lines] (0,-2) grid [step=0.5] (10,2);
\draw [ultra thick] (0,0)--(2,0)--(2,2)--(3,2)--(3,-1)--(6,-1)--(6,0)--(7,0)--(7,2)--(8,2)--(8,-1.5)--(10,-1.5);
\draw [ultra thick, dotted] (0,0)--(10,0);
\draw [opacity=0.3,pattern=crosshatch] (2,0) rectangle (3,2);
\draw [opacity=0.3,pattern=crosshatch] (3,-1) rectangle (8,2);
\draw [opacity=0.3,pattern=crosshatch] (8,-1.5) rectangle (10,2);
 \node at (-0.3,0) {$x$};
 \node at (4.8,0.5) {$I_{\G}$};
 \node at (4.8,-1.5) {$\G$};
\node at (10.3,-1.5) {$y$};
\end{scope}
\end{tikzpicture}
\caption{A Duarte path $\G$ (thick polygonal line) and the corresponding horizontal
  interval $I_{\G}$ (dotted line). Clearly, $\G\subseteq [Y]$ implies
  that $[Y]$ contains the shaded region. In
  particular $I_{\G}\subseteq [Y]$.}
\label{fig:3}
\end{figure}

\subsection{Algorithmic construction of the test function and proof of Theorem \ref{thm:Duarte}}
\label{sec:Algo}  
Fix $\epsilon$ a small positive constant that will be chosen later on and
let 
\begin{equation}\label{def:ell}
 \quad \ell = \Big\lfloor \frac{1}{\eps q}\log(1/q) \Big\rfloor.
\end{equation}
Suppose that a vertical interval $I$ of length $\ell$  is completely infected. Notice that, with $\mu$-probability going to $1$ as $q\downarrow 0$, there is an infected site on  the vertical interval sitting on the right, $I+\vec e_1$. Therefore,  thanks to Lemma \ref{lem:propagation}, with high probability the infection can propagate to infect $I+\vec e_1$.  Notice that instead the infection on $I$ does not help infecting the interval on its left, $I-\vec e_1$.  At this point, recalling the explanation given in the Introduction, one might think that the droplets that undergo an East like dynamics \footnote{namely a dynamics in which droplets appear/disappear only if there is a droplet on their left, as it occurs for the single empty sites in the East one-dimensional model.} are the {\sl empty vertical intervals of length at least $\ell$}.
However this is far from true, since these empty intervals might also appear (or disappear) without being facilitated by the presence of an empty interval on their left. For example, if there is an empty interval of length $\ell -1$ and the site just above has the constraint satisfied, a single legal move may turn it into an empty interval of height $\ell$.  We have therefore to find a more flexible definition of the droplets respecting three key properties: (i) East like dynamics ; (ii) disjoint occurrence under the equilibrium measure $\mu$ and (iii) the density of droplets should scale as $q_{\mbox{\tiny{eff}}}=q^{\ell}$ \footnote{Indeed, since the density of droplets will play the role of the density of empty sites for East, it is natural to expect that the lower bound obtained using the droplets will be of the form  \eqref{eq:East} with $q_{\mbox{\tiny{eff}}}$ replacing $q$. This in turn yields the result of Theorem \ref{thm:Duarte} if  $q_{\mbox{\tiny{eff}}}=q^{\ell}$.}. Our solution to the problem is the construction of an algorithm that sequentially searches for properly defined droplets on a finite volume, $V$, containing the origin.
We let
\begin{equation}\label{def:N}N= \big\lfloor e^{\e (\log q)^2/q}\big\rfloor  \quad {\mbox{ and }} \quad V:=V_N=\cup_{i=1\,}^{N} \mathcal{C}_i, 
\end{equation}
where
$$\cC_i= 
 \{(i,j)\in \bbZ^2 \colon |j| < N^2 -(i-1)N \}-N\vec e_1.
$$
as in Figure \ref{fig:5}.
In the sequel we shall write $\bar V$ for set $V\cup \partial_\perp V$
and we shall refer
to $\bar \cC_i:=\cC_i\cup \partial_\perp\cC_i$ as the $i^{th}$-column of $\bar V$.
By construction the origin coincides with the midpoint of
the last column (see Figure \ref{fig:5}). 
The core of our
algorithmic construction (see Definition \ref{def:algo}) consists in associating to each $\o \in \O$ an
element $\Phi(\o)\in \{\downarrow,\uparrow\}^N$ via an iterative 
procedure based on the   $DB_{\L}^{\tau}$-process. These arrow variables are those that
satisfy the three key properties announced above, with $\Phi(\o)_i=\uparrow$ corresponding to the occurrence of a droplet in column $i$, and we will use them to construct an efficient test function.
\begin{figure}[ht]
  \centering
\begin{tikzpicture}[>=latex,scale=0.3]
[x=0.5cm, y=0.5cm]

\begin{scope}
\end{scope}
\begin{scope}
\foreach \i in {1,2,3,4,5}{%
\draw ({-10+(\i-1)*1.3},{-10+2*(\i-1)+0.7}) rectangle ({-10+(\i-1)*1.3+1},{10-2*(\i-1)-1.2});
\pgfmathsetmacro{\z}{2*(20-4*(\i-1)-1)}
\fill ({-10+(\i-1)*1.3+0.5}, {-10+2*(\i-1) +\z/2}) circle (5pt);
\fill ({-10+(\i-1)*1.3+0.5}, {-10+2*(\i-1) +1/2}) circle (5pt);

\foreach \j in {1,...,\z}{%
\fill [opacity=0.8] ({-10+(\i-1)*1.3+0.5}, {-10+2*(\i-1) +\j/2}) circle (3pt);
}%
}%

\draw [thick,dotted] (-11,0)--(-16,0);   
\draw[decorate,decoration={brace,mirror}] (-4.8,0.9)--(-4.8,2.7);
\node at (-3.5,2) {$N$};
\node at (-4,-2.8) {$\cC_N$};
\end{scope}
\end{tikzpicture}
\caption{A sketchy drawing of the last few columns of the set $V$. The black
  dots represents sites belonging to $\partial_\perp V$. 
}
\label{fig:5}
\end{figure}
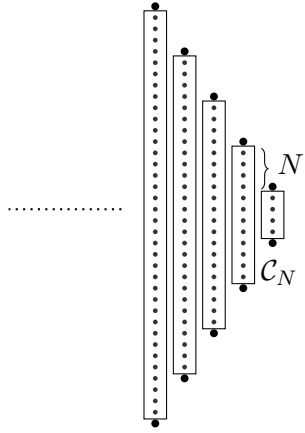
\begin{definition}
\label{def:UV}
Given a boundary condition $\t$ and $\o\in \O,$ we shall say that
$I\subseteq V$
is $(\o,\t)$-infectable if $I\subseteq\big [Y(\o)\cap
V\big]_{V}^{\t},$ where we recall that
$Y(\o)$ is the set of empty vertices of $\o$.
\end{definition}
Before defining the algorithm  leading to the construction of an
effective test function for the Duarte KCM process,
it is useful to notice two simple properties of the $DB_V^\t$-
process.  
\begin{enumerate}[(i)]
\item Let $I\subseteq \cup_{i=1}^k \cC_i, k\le N.$ Then the property of being
  $(\o,\t)$-infectable for $I$ depends only on the infection of the
  pair $(\o,\t)$ in $\cup_{i=1}^k \bar \cC_i$ and on $\t_\parallel$.  
\item If $\bar\cC_i$ is healthy at time $t=0$ (including the
  contribution of $\t$ at its top and bottom boundary sites), then it
  will remain healthy at any later time. 
\end{enumerate}

\begin{definition}[The algorithm]
\label{def:algo}
Given $\o\in \O$ and $\t\in \{0,1\}^{\partial
  V}$ such that $\t_\perp \equiv 0$ and
$\t_\parallel\equiv 1$, the algorithm outputs recursively a sequence $\psi^{(k)}:=(\o^{(k)},
\t^{(k)}),\ k\in \{0,\dots,N\},$ where $\o^{(k)}\in \O$ and
$\t^{(k)}\in \{0,1\}^{\partial V}$ is such
that $\t^{(k)}_\parallel\equiv 1$.  The pair $\psi^{(0)}$ coincides
with $(\o,\t)$ and $\psi^{(k)}$ is obtained
from $\psi^{(k-1)}$ by healing suitably chosen infected
vertices. The iterative step goes as follows. Fix $\ell\in [N]$  and assume that
$\psi^{(j)}$ has been defined for all
$j=0,\dots, k-1, k\in [N].$ Then: 
\begin{enumerate}[(i)]
\item if $\bar \cC_k$ contains an interval $I$ of length
    at least $\ell$ which is 
    $\psi^{(k-1)}$-infectable, we let $\xi_k:=\xi_k(\o)\le k$ be
  the largest integer such that, by removing all the
  empty vertices of the pair $\psi^{(k-1)}$ contained in
  $\cup_{i=1}^{\xi_k-1}\bar \cC_i$, the above property still holds. We
  then set both $\o^{(k)}$ and $\t^{(k)}$ identically equal to one (\ie with no
  infection) on $\bar \cC_{\xi_k},\dots, \bar \cC_k$ and equal to
  $\o^{(k-1)}$ and $\t^{(k-1)}$ elsewhere; 
\item if not we set $\psi^{(k)}=\psi^{(k-1)}.$
 \end{enumerate}
 \end{definition}
 \begin{remark}
   Clearly the above construction depends on the initial $\o$ and we
   shall sometimes write $\psi^{(k)}(\o)$ to outline this dependence. 
 \end{remark}
 \begin{definition}[Droplets and their range]
\label{def:droplets} Given $k$ such that $\psi^{(k)}(\o)\neq \psi^{(k-1)}(\o),$ we define the
\emph{droplet} $D_k(\o)$ and the \emph{range} $r_k(\o)$ of the
$k^{th}$-column in $\o$ as the set
$\cup_{i=\xi_k}^k \bar \cC_i$ and the integer $k-\xi_k(\o)$
respectively. If instead $\psi^{(k)}(\o)=\psi^{(k-1)}(\o),$ we let $D_k(\o)=
\emptyset$ and $r_k(\o)=0$.
 \end{definition}
Observe that, by construction,
\begin{equation}
  \label{eq:10}
\psi^{(j)}(\o)\restriction_{\bar V\setminus
  \cup_{i=1}^{j}D_i(\o)}=\psi^{(0)}(\o)\restriction_{\bar V\setminus
  \cup_{i=1}^{j}D_i(\o)}.
\end{equation}
\begin{definition}[The mapping $\Phi$] Having defined the sequence
  $\{\psi^{(k)}\}_{k=1}^N,$ we set 
  \begin{equation*}
\Phi(\o)_k = 
\begin{cases}
\uparrow &\text{ if $\psi^{(k)}(\o)\neq \psi^{(k-1)}(\o),$}\\
\downarrow &\text{ otherwise,}  
\end{cases}
  \end{equation*}
and $N_{\uparrow}(\o)=\#\{i\in [N]\colon \Phi(\o)_i=\uparrow\}$.
\end{definition}
\begin{remark}
\label{rem:algo}Suppose that $\o,\o'$ are such that they coincide over the first $i$
columns. Then
$\Phi(\o)_k=\Phi(\o')_k$ for all $k\in [i]$.  
\end{remark}
In the sequel two events will play an important role. 
The
first one, $\cB_1(n),$ collects all the
$\o'$s whose image $\Phi(\o)$ has more than $n$ up-arrows, with $n\in [N]$:  
\begin{equation}
  \label{eq:B1}
\cB_1(n)=\{\o\in \O\colon N_{\uparrow}(\o)\ge n \}.
\end{equation}
The event $\cB_2(n)$, again with  $n\in [N]$, collects instead all the $\o\in \O$ such that there
exists $n$ consecutive $\downarrow$-columns which are traversed by
an infectable Duarte path. More precisely, for $1\le i<j\le N,$ let
\begin{equation}\label{def:Vij}V_{i,j}=\cup_{k=i}^j \cC_k\end{equation} and let  
\begin{equation}
  \label{eq:B2}
\cB_2(n)=\cup_{j-i\ge n-1}\big(
\cap_{k=i}^j\{\o\in \O\colon \Phi(\o)_k=\downarrow\}\cap\cG_{i,j}\big), 
\end{equation}
where
\begin{equation}
  \label{eq:B2bis}
\cG_{i,j}=\big\{\o\in \O\colon \exists \text{ a Duarte
  path }\G \text{ from }\cC_i \text{ to }\cC_j \text{ such that }
\G\subseteq [Y(\o)\cap V_{i,j}]^{1,0}_{V_{i,j}}\big\}
\end{equation}
We are now ready to define our test function.
\begin{definition}[The test function]
\label{def:tf}
Let $I_0=\{(0,k)\colon |k|\le \ell\}$ and 
\begin{align}
  \label{eq:8}
   n_1= \e (\log q)^2/2q,\quad n_2=1/q^6
\end{align}
where $\e$ is the same as in the definition of N \eqref{def:N}. Let also
$$\O_{\downarrow}=\{\o\in \O: \Phi(\o)=(\downarrow,\dots,\downarrow)\},$$
\[
\O_g=\O_{\downarrow}\cap\{\o\in \O\colon
  \o_{I_0}=1\},
\]
\begin{equation}
  \label{eq:7}
  \begin{split}
  \cA_{\epsilon,q}:=\cA_{N,\ell,n_1,n_2}
  =\{\o\in\O\colon \exists \text{ a legal path } \gamma \text{ connecting } \Omega_g \text{ to } \o \text{ s.t. }\\
   \g\cap  \cB_1(n_1-1)=\emptyset \text{ and }  \g\cap
    \cB_2(n_2-1)=\emptyset \}.   
  \end{split}
\end{equation}
where legal paths have been defined in Definition \ref{def:legal} and, for any  $\mathcal B\subset \Omega$, we set $\g\cap \mathcal B=\emptyset$ iff none of the configurations of the path $\gamma$ belongs to $\mathcal B$.
Then we choose as test function
\[
\phi(\cdot):= \phi_q(\cdot)=\id_{\cA_{\epsilon,q}}(\cdot)/\mu(\cA_{\epsilon,q})^{1/2},
\]
\end{definition}
The rest of the paper is devoted to prove that (i) $\phi$ satisfies all the hypotheses of Corollary \ref{cor:basic:bound}, namely $\phi\in H_A$ and the conditions \eqref{eq:5} are satisfied; (ii) $\phi$ is an efficient proxy function, namely the bound
\eqref{eq:2} prove the sharp lower bound of Theorem \ref{thm:Duarte}. 
More precisely we need to prove the following key propositions:
\begin{proposition}
\label{prop:P1}
There exists $\e_0>0$ such that, for all $\e\in (0,\e_0)$ there exists
$q_\epsilon$ small enough such that, for all $q\in (0,q_\epsilon)$,    
\[
\cA_{\e,q}\cap A=\emptyset.
\]
In particular, $\phi\in H_A$.
\end{proposition}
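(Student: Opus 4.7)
The plan is to proceed by contradiction: assume that some $\o\in \cA_{\e,q}$ satisfies $\o_0=0$, and fix, via Definition~\ref{def:tf}, a legal path $\g=(\o^{(0)},\dots,\o^{(m)})$ from $\eta:=\o^{(0)}\in \O_g$ to $\o^{(m)}=\o$ with $\g\cap \cB_1(n_1-1)=\g\cap\cB_2(n_2-1)=\emptyset$. Since $\eta_{I_0}\equiv 1$ forces $\eta_0=1$, there is a first time $t^*\in[1,m]$ at which the origin becomes empty. The legal update at step $t^*$ must then take place at the origin, and by the Duarte rule $\o^{(t^*-1)}$ contains at least two empty sites among $\{(0,1),(0,-1),(-1,0)\}$; equivalently, the origin lies in the one-step Duarte bootstrap closure of $Y(\o^{(t^*-1)})\cap V$ with $(1,0)$ boundary.

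The first ingredient is a pigeonhole on $\Phi(\o^{(t^*-1)})$. The constraint $N_\uparrow(\o^{(t^*-1)})\le n_1-2$ implies that the $\uparrow$-columns split $[N]$ into at most $n_1-1$ maximal runs of $\downarrow$-columns. With $N=\lfloor e^{\e(\log q)^2/q}\rfloor$, $n_1=\e(\log q)^2/(2q)$ and $n_2=1/q^6$, we have $N\gg (n_1-1)(n_2-1)$ as $q\to 0$, so the longest maximal $\downarrow$-run has length at least $n_2$. Call it $[i_*,j_*]$ and write $V_*:=V_{i_*,j_*}$.

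The core step is to show that the infection carried by $\o^{(t^*-1)}$ produces, inside $V_*$, a Duarte path $\G\subseteq[Y(\o^{(t^*-1)})\cap V_*]^{1,0}_{V_*}$ connecting $\cC_{i_*}$ to $\cC_{j_*}$; this directly yields $\o^{(t^*-1)}\in\cG_{i_*,j_*}\cap\bigcap_{k=i_*}^{j_*}\{\Phi_k=\downarrow\}\subseteq\cB_2(n_2-1)$, contradicting the choice of $\g$. To construct $\G$, I would argue that: (i) the infection transport of the $DB_V^{1,0}$-process from the free boundary $\partial_\perp V$ to the origin in $\cC_N$ must cross $V_*$, since each $\uparrow$-column $k$ to the right of $j_*$ can only absorb a bounded left-range of infection, namely $k-\xi_k$ (Definition~\ref{def:droplets}), and the cumulative absorption of the at most $n_1-2$ such columns is far smaller than the width $j_*-i_*+1\ge n_2$ of $V_*$; (ii) the $\downarrow$-status of every column of $[i_*,j_*]$ precludes any length-$\ell$ infectable vertical interval inside any $\bar\cC_k$, $k\in[i_*,j_*]$, and therefore forbids the Duarte bootstrap from expanding vertically inside $V_*$ beyond height $\ell-1$ in any single column; (iii) the screening Lemma~\ref{lem:screening} together with the volume comparison Lemma~\ref{lem:monotonicity}(C) let me confine the relevant bootstrap computation to $V_*$ with the $(1,0)$ boundary condition; and (iv) Corollary~\ref{cor:duarte-path} then identifies the resulting thin rightward-spreading front with a Duarte path, as required.

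The main obstacle I anticipate is the bounded-absorption property of $\uparrow$-columns used in (i): formally ruling out that the bootstrap is routed entirely through the $\uparrow$-columns to the right of $j_*$ requires a careful inductive analysis of the sequence $\psi^{(k)}(\o^{(t^*-1)})$, exploiting Remark~\ref{rem:algo} (so that the algorithmic decomposition depends only on columns already processed) and the identity~\eqref{eq:10} (so that $\psi^{(k)}$ differs from $\o^{(t^*-1)}$ only inside healed droplets). Concretely, one needs a quantitative version of Lemma~\ref{lem:propagation} showing that under the $\downarrow$-constraint across $[i_*,j_*]$ any rightward crossing of $V_*$ by the Duarte bootstrap must in fact follow a thin Duarte path; this combinatorial step is the crux of the proposition, and the place where the specific geometry of the Duarte update family (2-subsets of North, South, West neighbours) is used in an essential way.
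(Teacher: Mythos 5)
Your proposal has a genuine gap at its core step, and it misses the idea that actually drives the paper's proof. You try to derive a contradiction from the \emph{single} configuration $\o^{(t^*-1)}$ at the first time the origin is emptied, by arguing that its Duarte bootstrap closure $[Y(\o^{(t^*-1)})\cap V_*]^{1,0}_{V_*}$ must contain a Duarte path crossing the long $\downarrow$-run $V_*$. This is false: the legal flip at the origin only requires two empty sites among $\{(0,1),(0,-1),(-1,0)\}$ at time $t^*-1$, and nothing in the definition of $\cA_{\e,q}$ forces $\o^{(t^*-1)}$ to retain any long-range infected structure — the infection reaching the origin is built up over many steps of the path with healing in between, precisely because the KCM is non-monotone. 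A static percolation statement about one configuration cannot detect the obstruction; the events $\cB_1,\cB_2$ constrain \emph{every} configuration along the path, and the contradiction must be dynamic. (Your step (i), about the bootstrap ``transport from $\partial_\perp V$ to the origin'' having to cross $V_*$, conflates the KCM path with a single bootstrap run; this is exactly the distinction between KCM and bootstrap lower bounds that the whole paper is about.)

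The paper's proof is structured entirely differently. First (Lemma \ref{lem:pers}) one upgrades the hypothesis: infecting the origin forces the path to sequentially empty a full half-interval $I_0^{\pm}$ of length $\ell$, i.e.\ to turn the last column into an $\uparrow$-column, while still avoiding $\cB_1(n_1)$ and $\cB_2(n_2)$. Then one coarse-grains $[N]$ into $M=N/m$ blocks of size $m=4n_1n_2$ and shows (Claim \ref{claim:20}, via Lemmas \ref{lem:D0}, \ref{lem:D2} and Corollary \ref{cor:D1}) that the block variables $\eta_i$ evolve along the path as a \emph{legal East path} on $[M]$: a block can change only if the previous block is up, because a single legal flip can alter $\Phi$ only to the right of an existing $\uparrow$ (Lemma \ref{lem:D2}(a)), and its influence is confined to $O(n_2)$ columns by the range bound coming from $\cB_2^c(n_2)$. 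Finally the combinatorial result of \cite{CDG} for the East chain forces at least $\log_2(M+1)\sim \e(\log q)^2/(q\log 2)$ simultaneous up-blocks, which exceeds the cap $n_1=\e(\log q)^2/(2q)$ imposed by $\cB_1(n_1)$ — contradiction. Your proposal never invokes this $\log_2$ barrier, and your pigeonhole (existence of a long $\downarrow$-run at one fixed time) does not by itself produce any contradiction with the definition of $\cA_{\e,q}$. The missing ingredients are therefore: (i) the reduction showing the last column must become $\uparrow$; (ii) the East-like monotonicity of the arrow variables under single legal flips; and (iii) the $\log_2$ counting argument that makes $n_1$ too small.
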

\begin{proposition}
\label{prop:P2}
There exists $\e_0>0$ such that, for all $\e\in (0,\e_0),$ 
\[
\mu(\phi)\ge q^{O(1)} \quad \text{and}\quad \cD(\phi)\le
e^{-\O(\log(q)^4/q^2)} \quad \text{as }q\to 0.
\] 
\end{proposition}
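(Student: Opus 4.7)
The strategy is to verify both estimates directly for $\phi=\id_{\cA_{\epsilon,q}}/\mu(\cA_{\epsilon,q})^{1/2}$: lower bound $\mu(\cA_{\epsilon,q})$ via the inclusion $\O_g\subseteq \cA_{\epsilon,q}$, and upper bound $\cD(\phi)$ through the $\mu$-mass of the single-flip boundary of $\cA_{\epsilon,q}$.

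For the first estimate, since the length-zero path is legal and trivially disjoint from $\cB_1(n_1-1),\cB_2(n_2-1)$, we have $\O_g\subseteq\cA_{\epsilon,q}$ and thus
\[
\mu(\cA_{\epsilon,q}) \ge \mu(\O_g) \ge \mu(\o_{I_0}\equiv 1) - \mu(\O_\downarrow^c).
\]
The first term equals $(1-q)^{2\ell+1}\ge q^{O(1/\e)}$ by \eqref{def:ell}. For the second, the monotonicity of the Duarte bootstrap under the healing steps of the algorithm (Lemma \ref{lem:monotonicity}(A)) ensures that $\{\Phi(\o)_k=\uparrow\}\subseteq E_k$, where $E_k$ is the event that $\bar\cC_k$ contains an $(\o,\t)$-infectable interval of length $\ell$. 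A standard Duarte bootstrap estimate---essentially that an infectable length-$\ell$ interval forces $\O(\ell)$ originally empty sites in its neighbourhood---gives $\mu(E_k)\le |V|^{O(1)}q^{c\ell}$ for some $c>0$; a union bound over $k\in [N]$ then yields $\mu(\O_\downarrow^c)\le e^{-\O((\log q)^2/q)}$ for $\e$ small enough, negligible next to $q^{O(1/\e)}$, so $\mu(\phi)\ge q^{O(1)}$.

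For the Dirichlet form, $\phi$ depends only on $\o\restriction_V$, and a standard computation (cf.\ \cite{CFM3}*{Section 3.5}) gives
\[
\cD(\phi) \le \frac{C\,|V|}{q\,\mu(\cA_{\epsilon,q})}\,\big(\mu(\cB_1(n_1-1))+\mu(\cB_2(n_2-1))\big),
\]
because by the very definition of $\cA_{\epsilon,q}$ any legal flip leaving it must land in $\cB_1(n_1-1)\cup\cB_2(n_2-1)$, modulo an $O(1/q)$ Radon--Nikodym factor absorbed when changing variable under the flip. The bound on $\cB_2$ is comparatively painless: an infectable Duarte path across $n_2=q^{-6}$ consecutive $\downarrow$-columns requires $\O(n_2)$ originally empty sites, so $\mu(\cB_2(n_2-1))\le N^2 e^{-\O(n_2\log(1/q))}\le e^{-\O(q^{-6}\log(1/q))}$, much stronger than needed.

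The heart of the argument, and the main technical obstacle, is the bound on $\mu(\cB_1(n_1-1))$. The plan is to exploit the fact that, by the algorithmic construction (Definition \ref{def:algo}) and the invariance property \eqref{eq:10}, the column-ranges $[\xi_k,k]$ of distinct $\uparrow$-columns are pairwise disjoint, so that $N_\uparrow\ge n_1$ forces $n_1$ infectable length-$\ell$ intervals sitting on pairwise disjoint portions of $\bar V$ that thus rely on disjoint supplies of originally empty sites. A disjoint-occurrence (BK-type) argument combined with the single-column estimate $\mu(E_k)\le q^{c\ell}$ then produces
\[
\mu(\cB_1(n_1-1))\le \binom{N}{n_1-1}\, q^{c\ell(n_1-1)}\le \exp\!\Big[(\e^2 - c)\,\tfrac{(\log q)^4}{2q^2}\Big]= e^{-\O((\log q)^4/q^2)}
\]
for $\e$ sufficiently small, using $\ell(n_1-1)|\log q|\sim (\log q)^4/(2q^2)$ and $n_1\log N\sim \e^2(\log q)^4/(2q^2)$. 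Combining this with $\mu(\cA_{\epsilon,q})\ge q^{O(1)}$ and the sub-exponential factor $|V|/q=e^{O((\log q)^2/q)}$ yields $\cD(\phi)\le e^{-\O((\log q)^4/q^2)}$. The hardest parts of the full proof are the rigorous disjoint-occurrence/decoupling step leveraging \eqref{eq:10} and the per-column Duarte bootstrap estimate $\mu(E_k)\le q^{c\ell}$, both of which demand non-trivial combinatorial analysis of the Duarte closure.
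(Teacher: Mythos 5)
Your overall architecture coincides with the paper's: the lower bound on $\mu(\phi)$ via (essentially) $\O_g\subseteq \cA_{\e,q}$, the reduction of $\cD(\phi)$ to $\mu(\cB_1(n_1-1))+\mu(\cB_2(n_2-1))$ through the observation that a legal flip exiting $\cA_{\e,q}$ must land in $\cB_1(n_1-1)\cup\cB_2(n_2-1)$, and the bound $\binom{N}{n}(\max\mu(\cQ^{1,0}_{V_{i,j}}))^n$ exploiting independence of the single-block events over disjoint column ranges. (Two small slips: the correct inclusion is $\O_g\cap\cB_2(n_2-1)^c\subseteq\cA_{\e,q}$, since a configuration of $\O_g$ may itself lie in $\cB_2(n_2-1)$; and you also need $\mu(\O_\downarrow^c)$ small just to get started, which already requires the single-column estimate.)

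The genuine gap is in the two inputs you treat as ``standard'': the single-block estimate $\mu(\cO^{1,0}_{V_{i,j}}(I))\le e^{-c(\log q)^2/q}$ (the paper's Lemma \ref{lem:D4}) and the bound on $\cB_2$ (Lemma \ref{lem:D5}). You propose to obtain both from deterministic claims of the form ``infecting a length-$\ell$ interval (resp.\ spanning $n_2$ columns) forces $\O(\ell)$ (resp.\ $\O(n_2)$) initially empty sites'', followed by a first-moment count. Even granting the deterministic claims, the count fails: the required empty sites may sit anywhere in $V_{i,j}$, whose cardinality is of order $N^3$ with $\log N=\e(\log q)^2/q\gg\log(1/q)$, so the entropy factor $\binom{|V_{i,j}|}{c\ell}\approx e^{3c\ell\log N}=e^{3c(\log q)^3/q^2}$ overwhelms the gain $q^{c\ell}=e^{-c(\log q)^2/(\e q)}$ (and similarly for $\cB_2$). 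The paper's proofs are structurally different and constitute the technical core of the section: Lemma \ref{lem:D4} is proved by contradiction, showing via FKG and a renormalisation over $O(p(N,\ell)^{-1})$ horizontal translates of $\L_{1,j}$ (Claim \ref{claim:1}, using the empty-stair construction) that a too-large value of $\mu(\cO^{1}_{\L_{1,j}}(I))$ would make the full-plane Duarte bootstrap infect the origin in time $e^{(\log q)^2/16q}$, contradicting the lower bound $T(\cU)\ge e^{(1-o(1))(\log q)^2/8q}$ of \cite{BCMS-Duarte}; and Lemma \ref{lem:D5} uses Corollary \ref{cor:duarte-path} to extract from the Duarte path a fully infected horizontal segment of length $\Theta(n_2)$, which with probability $1-e^{-\O(qn_2)}$ meets an upward empty stair and would then create an up-arrow among the supposedly $\downarrow$ columns --- a contradiction, yielding only $e^{-\O(1/q^5)}$ rather than your claimed $e^{-\O(n_2\log(1/q))}$. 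Without these two lemmas (or a genuinely different proof of them), your argument does not close.
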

Once the above propositions are proven, the main result of this section easily follows
\begin{proof}[Proof of Theorem \ref{thm:Duarte}]
The result follows at once using Propositions
\ref{prop:P1} and \ref{prop:P2}, together with the general lower bound
on $\bbE_\mu(\t_0)$ given in \eqref{eq:2}.  
\end{proof}

Let us start with an easy result which will be used in the proof of both propositions
\begin{lemma}[Disjoint occurrence of the droplets]
\label{lem:D1}
For any $\o\in \O$ and any $k\neq j,$ $D_k(\o)\cap D_j(\o)=\emptyset$.
\end{lemma}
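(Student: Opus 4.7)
The plan is to show, without loss of generality, that for $k<j$ with $\Phi(\omega)_k=\Phi(\omega)_j=\uparrow$ (otherwise one of the droplets is empty and the claim is trivial), one has $\xi_j\ge k+1$; since $D_m=\cup_{i=\xi_m}^m\bar\cC_i$ is a union of pairwise disjoint columns, this is equivalent to $D_k\cap D_j=\emptyset$.

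First I would record the bookkeeping fact that step $k$ of the algorithm sets both $\omega^{(k)}$ and $\tau^{(k)}$ identically equal to $1$ on $\bar\cC_{\xi_k}\cup\cdots\cup\bar\cC_k$ and that subsequent iterations never reintroduce infection (they only perform further healings). A straightforward induction on the iteration index then shows that, in $\psi^{(j-1)}=(\omega^{(j-1)},\tau^{(j-1)})$, both components are identically $1$ on the strip $\bar\cC_{\xi_k}\cup\cdots\cup\bar\cC_k$; moreover $\tau_\parallel^{(j-1)}\equiv 1$ throughout the algorithm, since the algorithm never modifies sites in $\partial_\parallel V$.

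The heart of the argument is a barrier statement: in the $DB_V^{\tau^{(j-1)}}$-process started from $Y(\omega^{(j-1)})\cap V$, no site of the strip $\bar\cC_{\xi_k}\cup\cdots\cup\bar\cC_k$ ever becomes infected. I would prove this by induction on the bootstrap time $t$; the base case $t=0$ is the previous observation. For the inductive step consider $x\in\cC_c$ with $c\in[\xi_k,k]$: its $N$ and $S$ neighbors lie in $\bar\cC_c$ and are healthy, either by the inductive hypothesis (if they belong to $\cC_c$) or because $\tau_\perp^{(j-1)}=1$ on $\partial_\perp\cC_c$; its $W$ neighbor lies in $\bar\cC_{c-1}$ when $c\in[\xi_k+1,k]$ (healthy by IH) or in $\partial_\parallel V$ when $c=1$ (healthy since $\tau_\parallel^{(j-1)}\equiv 1$). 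Hence only when $c=\xi_k\ge 2$ can the $W$ neighbor possibly be infected, and in every case at most one of the three $\{W,N,S\}$-neighbors of $x$ is infected; thus no $2$-subset---i.e.\ no element of the Duarte update family---is fully infected at $x$, and $x$ stays healthy.

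Granting the barrier, the conclusion is immediate. Since infection originating strictly west of the strip $\cup_{i=\xi_k}^k\bar\cC_i$ can never cross it, the infectability of any length-$\ell$ interval in $\bar\cC_j$ under $\psi^{(j-1)}$ depends only on the restriction of $\omega^{(j-1)}$ to columns $k+1,\ldots,j$ (and on the fixed boundary $\tau^{(j-1)}$). Healing all empty vertices of $\psi^{(j-1)}$ contained in $\cup_{i=1}^k\bar\cC_i$ neither alters that restriction nor destroys the barrier, so it preserves the infectability of every length-$\ell$ interval in $\bar\cC_j$. Since $\Phi(\omega)_j=\uparrow$ guarantees that some such interval is $\psi^{(j-1)}$-infectable, the maximality in the definition of $\xi_j$ forces $\xi_j\ge k+1$. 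The main (and really only) obstacle is the barrier induction, which is where the specific Duarte geometry truly enters: the absence of an East neighbor in the update family is precisely what prevents infection from ever traversing a fully healthy vertical strip from west to east.
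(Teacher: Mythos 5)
Your proof is correct and follows essentially the same route as the paper: the paper's argument likewise rests on the observation that a fully healed column is an impenetrable barrier for the $DB_V^\t$-process (stated there as property (ii) before the algorithm), so that by maximality of $\xi_j$ a droplet can never contain, and hence never overlap, a previously healed one. The only difference is cosmetic: you spell out the barrier induction using the Duarte geometry, which the paper records as an unproved observation.
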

\begin{proof}
Let $k_1,\dots,k_\nu$ be the labels of the columns which are of type
$\uparrow$ in $\Phi(\o)$ (for all the other columns the droplets are
the empty set). Using property (ii) of the $DB_V^\t$-process, $D_{k_\nu}(\o)$ cannot contain a column which
    is healthy for the pair $\psi^{(k_\nu-1)}$ because any infection
    to the left of an healthy column cannot cross the healthy column
    itself. On the other hand, all the columns of the
droplets $D_{k_1},\dots,D_{k_{\nu-1}}$ are healthy for $\psi^{(k_\nu-1)}$. Thus $D_{k_\nu}\cap
    D_{k_{j}}=\emptyset$ for all $j\in [\nu-1].$ The same reasoning applies to all the
    other droplets.
 \end{proof}

\subsection{East-like motion of the arrows and proof of Proposition \ref{prop:P1}}
Let 
\[
A_\ell= \{\o\in \O\colon \o_{I^+_0}\equiv 0\}\cup \{\o\in
\O\colon \o_{I^-_0}\equiv 0\},
\]
where $I_0^\pm=\{(0,\pm 1),\dots,(0,\pm \ell)\}.$
Then it holds
\begin{lemma}\label{lem:pers}
If $\cA_{\e,q} \cap A\neq \emptyset$ 
then there exists $\o\in A_\ell$ and a legal path $\g$
connecting $\O_g$ to 
$\o$ such that 
$\g\cap
\cB_i(n_i)=\emptyset,\ i=1,2.$
\end{lemma}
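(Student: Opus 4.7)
The plan is to argue by extension. Given $\o^*\in \cA_{\e,q}\cap A$ together with a legal path $\g^*$ from $\O_g$ to $\o^*$ with $\g^*\cap\cB_i(n_i-1)=\emptyset$, the path $\g^*$ automatically satisfies $\g^*\cap\cB_i(n_i)=\emptyset$ since $\cB_i(n_i)\subset\cB_i(n_i-1)$ for $i=1,2$. The substance of the lemma is therefore to append to $\g^*$ further legal moves so that the concatenated path $\g$ terminates at some $\o\in A_\ell$ and still avoids $\cB_i(n_i)$. This slack (increasing the allowed $N_\uparrow$ by one and the allowed length of consecutive $\downarrow$-runs with an infectable Duarte path by one) is exactly what is needed to upgrade the conclusion from ``origin infected'' to ``$I_0^+$ or $I_0^-$ fully infected''.

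First I would construct the extension as a deterministic one-dimensional cascade in $\bar\cC_N$ aimed, WLOG, at $I_0^+$. Since $\o^*_{(0,0)}=0$, the flip of $(0,1)$ is Duarte-legal as soon as one of $(-1,1),(0,2)$ is infected; inductively, given $(0,k-1)=0$, the flip of $(0,k)$ is legal provided that one of $(-1,k),(0,k+1)$ is infected. If neither helper is present in the current configuration, I would perform auxiliary legal flips entirely inside $\bar\cC_N$ (using the boundary condition $\t_\perp\equiv 0$ available on $\partial_\perp\cC_N$) to build the missing infected neighbour. Existence of such auxiliary flips is controlled by the Duarte bootstrap closure $[Y(\o^*)\cap V]_V^{1,0}$ via the screening and monotonicity Lemmas \ref{lem:screening} and \ref{lem:monotonicity}; the initial presence of $(0,0)\in Y(\o^*)$ together with reachability of $\o^*$ from $\O_g$ must guarantee that at least one of $I_0^\pm$ lies inside that closure, and Lemma \ref{lem:propagation} then turns the bootstrap cascade into an actual legal-path cascade.

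Next I would verify $\g\cap\cB_i(n_i)=\emptyset$. All new flips lie inside $\bar\cC_N$, so Remark \ref{rem:algo} guarantees $\Phi(\cdot)_k$ is constant along the extension for every $k\le N-1$. Only $\Phi_N$ can change, and it flips from $\downarrow$ to $\uparrow$ at most once, namely when an infectable interval of length $\ell$ first appears in $\bar\cC_N$. Thus $N_\uparrow$ grows by at most one along the extension, staying $\le n_1-1$, so $\cB_1(n_1)$ is avoided. For $\cB_2(n_2)$, any new infectable Duarte path produced by the extension must end in $\cC_N$: as long as $\Phi_N=\downarrow$, the corresponding run of consecutive $\downarrow$-columns ending at $N$ has length at most $n_2-1$ by the bound satisfied along $\g^*$; once $\Phi_N=\uparrow$, column $N$ no longer belongs to any such run, so the length of the run is bounded by the bound along $\g^*$ minus one, and again $\cB_2(n_2)$ is avoided.

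The main obstacle will be the feasibility step: a priori $\o^*$ could fail to contain the infections in $\bar\cC_N$ or in its interface with $\cC_{N-1}$ needed to drive the cascade to either $I_0^+$ or $I_0^-$. The resolution is that the very existence of $\g^*$ reaching $(0,0)=0$ from $\O_g$, while respecting the $\cB_i(n_i-1)$ budget, forces a sufficient reservoir of infection near the origin; quantifying this through the boundary-friendly bootstrap closure $[Y(\o^*)\cap V]_V^{1,0}$ and extracting the legal-path realisation of the cascade from it is where most of the work in the proof sits.
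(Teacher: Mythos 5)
Your route is genuinely different from the paper's, and it contains a gap that I do not think can be repaired in the form you propose. You try to \emph{extend} the path forward from the terminal configuration $\o^*\in\cA_{\e,q}\cap A$, and everything rests on the ``feasibility step'' you flag at the end: that reachability of $\o^*$ from $\O_g$ forces $\o^*$ to retain a reservoir of infection near the origin sufficient to drive a legal cascade up $I_0^+$ (or down $I_0^-$). This is false. Legal healings obey exactly the same constraint as legal infections, so a path may infect the origin and then immediately heal, in reverse order, everything it created except $(0,0)$ and the two helpers used at the last step; the resulting $\o^*$ is still reachable from $\O_g$ by a path avoiding $\cB_1(n_1-1)$ and $\cB_2(n_2-1)$, yet in a large neighbourhood of the origin only three sites are empty, no site adjacent to them has two empty neighbours among its North/South/West, and the cascade is stuck at height $1$. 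Nothing in the definition of $\cA_{\e,q}$ excludes such endpoints. A second concrete error: you invoke ``the boundary condition $\t_\perp\equiv 0$ on $\partial_\perp\cC_N$'' to produce auxiliary legal flips, but that boundary condition belongs to the auxiliary bootstrap process $DB^{\t}_{V}$ used to define $\Phi$, not to the KCM, which runs on all of $\bbZ^2$ with no boundary condition; the sites of $\partial_\perp\cC_N$ are ordinary sites whose state is whatever $\o$ prescribes.

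The paper's proof goes in the opposite direction: it exploits the \emph{history} of the path rather than its endpoint. One first observes that any legal path from $\O_g$ (where all of $I_0$ is occupied) that ends by infecting the origin must, along the way, have sequentially infected the vertices of $I_0^+$ starting from $(0,\ell)$, or those of $I_0^-$ starting from $(0,-\ell)$, possibly healing them afterwards. One then defines $\g$ by deleting from $\tilde\g$ every transition that heals a vertex of $I_0^+$: the modified path is still legal because at each step it carries at least as much infection in the last column as $\tilde\g$ did, it terminates in $A_\ell$, and it agrees with $\tilde\g$ on $\cC_1,\dots,\cC_{N-1}$, so the relaxation from $\cB_i(n_i-1)$ to $\cB_i(n_i)$ absorbs the single extra up-arrow and the single extra column that the added infection in $\cC_N$ can create. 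You correctly identified this bookkeeping role of the slack $n_i-1\to n_i$, and your verification that moves confined to column $N$ cannot violate $\cB_1(n_1)$ or $\cB_2(n_2)$ is essentially the right one, but it is attached to a construction whose first step cannot be carried out.
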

\begin{proof}
Fix $\o\in \cA_{\e,q} \cap A$, recall Definition \ref{def:tf} and let
$\tilde\g$ be a legal path connecting
$\O_g$ to $\o$
such that $\tilde \g\cap
\cB_1(n_1-1)=\emptyset$ and $\tilde \g\cap
\cB_2(n_2-1)=\emptyset.$ W.l.o.g., we can assume that $\tilde\g$ 
ends as soon as it
enters $A$. It is easy to verify that $\tilde\g$ must be able to sequentially infect (and
possibly heal later on) the ordered vertices of either
$I_0^+$ starting from $(0,\ell)$ or those of $I_0^-$ starting from $(0,-\ell)$. For simplicity we assume that the first
option holds and we let $\g$ be the path obtained from $\tilde\g$ by
deleting all the transitions in which a vertex of $I_0^+$ is
healed. 

By construction, the final configuration of $\g$
belongs to $A_\ell$. Moreover,
$\g$ is a legal path because at each step
the infection in the last column of $V$ is larger than or equal to the
infection of the corresponding step of $\tilde\g$. Finally the restriction
to $\cC_1,\dots,\cC_{N-1}$ of any step of $\g$ coincides
with the same restriction of the appropriate step of $\tilde\g$. Using that $\tilde\g\cap
\cB_1(n_1-1)=\emptyset$ and $\tilde\g\cap
\cB_2(n_2-1)=\emptyset,$ we deduce that $\g\cap
\cB_1(n_1)=\emptyset$ and $\g\cap
\cB_2(n_2)=\emptyset$. 
\end{proof} 
The above Lemma says that, if there exists a configuration in $\Omega_g$ for which we can infect the origin performing a legal path never crossing either $\mathcal B_1(n_1-1)$ or $\cB_2(n_2-1)$, 
then necessarily there exists a legal path never crossing either $\mathcal B_1(n_1)$ or $\cB_2(n_2)$ and connecting a configuration $\omega$ with all columns being $\downarrow$ to a configuration $\omega$ with a $\uparrow$ in the $N$-th column.
 In order to conclude that $\mathcal A_{\epsilon,q}\cap A=\emptyset$ and thus prove our Proposition \ref{prop:P1}, we will now 
show that {\sl the existence of a legal path with the above
properties is impossible}. It is here that the East-like motion of the droplets emerges and plays a key role.
Recall the definitions  \eqref{def:N}, \eqref{eq:8} and let $m=4n_1 n_2$ and, for simplicity, let us suppose that
$m$ divides $N$. We partition $[N]$ into $M=N/m$ disjoint consecutive blocks
$\{B_i\}_{i=1}^M$ of equal cardinality and, with a slight abuse of
notation, we identify the columns $\cup_{k\in B_i}\cC_k$ with the
block $B_i$ itself. Given $\o\in \O$ we write
\[
\eta_i(\o):=\id_{\{\exists \,j\,\in B_i\colon \Phi(\o)_j=\uparrow\}},
\]
and we denote by $\eta(\o)$ the collection 
$\{\eta_i(\o)\}_{i=1}^M.$
\begin{claim}
\label{claim:20} Given a legal path $\g$ with the properties stated in Lemma \ref{lem:pers},  it is possible to
construct a path
$\varphi(\g):=(\eta^{(0)},\dots,\eta^{(k)})$ in the space $\{0,1\}^M$
with the following properties:
\begin{enumerate}[(1)]
\item $\eta^{(0)}_i=0$ for all $i\in [M]$ and $\eta^{(k)}_M=1$,
\item $\#\{i\in [M]\colon \eta_i=1\}\le n_1$ for all $\eta\in
  \varphi(\g)$, 
\item for any edge $(\eta,\eta')$ of $\varphi(\g),$ the
  configuration $\eta'$ differs from $\eta$ in exactly one
  coordinate. Moreover, if the discrepancy between $\eta$ and $\eta'$
  occurs at the $i^{th}$-coordinate and $i\neq 1,$ then $\eta_{i-1}=1$.
\end{enumerate}
\end{claim}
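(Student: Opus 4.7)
The plan is to define $\varphi(\g)$ by projecting each configuration in $\g=(\o^{(0)},\dots,\o^{(n)})$ to its block signature $\tilde\eta^{(j)}:=\eta(\o^{(j)})$ and then refining this sequence by removing duplicate consecutive entries and inserting intermediate states where necessary. Properties (1) and (2) will be inherited essentially directly from the hypotheses of Lemma \ref{lem:pers}, while property (3), the East-compatibility, is the technical core.

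For (1), $\o^{(0)}\in\O_g\subseteq\O_\downarrow$ gives $\eta(\o^{(0)})=\mathbf{0}$, and $\o^{(n)}\in A_\ell$ contains a fully empty vertical interval of length $\ell$ inside $\bar \cC_N$ (either $I_0^+$ or $I_0^-$). That interval is trivially $(\psi^{(N-1)},\t)$-infectable, so $\Phi_N(\o^{(n)})=\uparrow$ and $\eta_M(\o^{(n)})=1$. For (2), the hypothesis $\g\cap\cB_1(n_1)=\emptyset$ yields $N_\uparrow(\o)<n_1$ along $\g$, and $\sum_i \eta_i(\o)\le N_\uparrow(\o)$; the inserted flips will be ordered so that the count of ones in the intermediate $\eta$ never exceeds the maximum of the two endpoint counts, hence remains below $n_1$.

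For (3), each consecutive pair $\o^{(j)}\to\o^{(j+1)}$ is a legal spin flip at some vertex $v\in\cC_k$ with $k$ in a block $B_{i_0}$. By Remark \ref{rem:algo}, the flip cannot affect $\Phi_{j'}$ for $j'<k$, so $\tilde\eta^{(j)}$ and $\tilde\eta^{(j+1)}$ agree on the first $i_0-1$ coordinates. Let $i_0\le j_1<\dots<j_r$ be the block-indices where they differ; I would insert $r-1$ intermediate configurations flipping one coordinate at a time, in an order to be chosen. The only non-trivial condition is then that whenever a coordinate $i>1$ is flipped, the current state satisfies $\eta_{i-1}=1$.

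The main obstacle is establishing this East support. The intended mechanism is that an $\uparrow$ at a column $m$ records the existence of an $\ell$-length $(\psi^{(m-1)},\t)$-infectable interval in $\bar\cC_m$; its appearance or disappearance after a flip at column $k$ must, through the screening (Lemma \ref{lem:screening}) and propagation (Corollary \ref{cor:duarte-path}) properties, be mediated by a Duarte path running from $\cC_k$ to $\cC_m$. If some differing block $j_s\ge i_0+2$ were separated from $i_0$ by entirely-$\downarrow$ blocks, the induced sub-Duarte-path would traverse at least $m-1 > n_2$ consecutive $\downarrow$ columns and witness the event $\cG_{i,j}$ for a suitable pair $i<j$ with $j-i\ge n_2-1$, hence membership in $\cB_2(n_2)$, contradicting $\g\cap\cB_2(n_2)=\emptyset$. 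Consequently, at least one intermediate block must carry an $\uparrow$, providing the East support needed at each stage of the decomposition. Making the correspondence between algorithmic changes in $\eta$ and Duarte-path propagation rigorous, using the monotonicity tools of Section \ref{sec:tools}, is where I expect the bulk of the technical work to lie.
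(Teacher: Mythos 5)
There is a genuine gap. Your plan hinges on two steps that are not established and, as formulated, would not go through. First, you propose to interpolate when a single spin flip changes several block coordinates of $\eta$, but property (3) then requires each \emph{inserted} single-coordinate flip to be East-legal, and nothing in your argument provides this; your conclusion that ``at least one intermediate block must carry an $\uparrow$'' is strictly weaker than what is needed, namely that the block \emph{immediately to the left} of the changed block is in state $1$. The paper avoids interpolation altogether: it proves that each edge of $\g$ that alters $\eta$ alters it in \emph{exactly one} coordinate. This localization does not come from $\cB_2$ alone, as you suggest. The event $\cB_2(n_2)^c$ only bounds the range of a \emph{single} droplet (Lemma \ref{lem:D0}); a flip at column $a$ can still perturb $\Phi$ at a far-away column through a \emph{chain} of droplets. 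The point of Lemma \ref{lem:D2}(b) and Corollary \ref{cor:D1} is that each link of such a chain creates a fresh up-arrow, so a discrepancy propagating across a whole block of size $m=4n_1n_2$ would force at least $2n_1$ up-arrows in $\o$ or $\o^x$, contradicting $\g\cap\cB_1(n_1)=\emptyset$. It is this interplay between $\cB_1$, $\cB_2$ and the choice of block length $m$ that confines the discrepancy to a single block coordinate; your proposal never invokes $m$ or the up-arrow count in this role.

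Second, the actual East support $\eta_{i-1}=1$ does not come from Duarte paths or screening, but from Lemma \ref{lem:D2}(a): a \emph{legal} flip at $x\in\cC_a$ with $a>1$ that changes $\Phi$ forces $\Phi(\o)_{a-1}=\Phi(\o^x)_{a-1}=\uparrow$. Combined with the localization above, one then checks the two cases ($a$ interior to its block, so $\eta_j=\eta'_j=1$ and the change is in block $j+1$; or $a$ at the start of its block, so $\eta_{j-1}=\eta'_{j-1}=1$ and the change is in block $j$), plus the unconstrained case $a=1$. This single-column facilitation mechanism is absent from your proposal, and without it property (3) cannot be verified. Your treatment of properties (1) and (2) is essentially correct (modulo the ordering issue for inserted flips, which disappears once one knows only one coordinate changes per step).
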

\begin{remark}
The path $\varphi(\g)$ for the coarse-grained variables
$\{\eta_i\}_{i=1}^M$ can be viewed as a legal path for the one dimensional East chain on $[M],$ with
facilitating vertices those for which $\eta_i=1$ (see
e.g. \cite{East-review}).  
\end{remark}
The proof of our Proposition \ref{prop:P1} then follows by using this connection with the East chain, our choices \eqref{def:N}, \eqref{eq:8} of the parameters $N,n_1,n_2$
and the combinatorial result for the East model \cite{SE1,CDG} that we explained in the Introduction. More precisely 
\begin{proof}[Proof of Proposition \ref{prop:P1}] In
\cite{CDG} it was proved that a path like $\varphi(\g)$ above exists
iff $n_1\ge \log_2(M +1).$ With our choice \eqref{eq:8} of the scaling as $q\to 0$ of
$n_1,n_2,N,$ the latter condition becomes
\[
n_1\ge \frac{1}{\log 2}(1+o(1))\e (\log q)^2/q,\quad \text{as } q\to 0,
\]
violating our choice $n_1=\e (\log q)^2/2q$. Thus $\varphi(\g)$
cannot exist as well as the path $\g$. 
\end{proof}

We are therefore left with proving Claim \ref{claim:20}.
To this aim we start by stating two preparatory results, Lemma \ref{lem:D0} and Lemma \ref{lem:D2}, which will be the key ingredients for the proof of  Claim \ref{claim:20}.

\begin{lemma}
\label{lem:D0}
For any $\o\in \cB_2^c(n_2)$ the maximum range of a droplet of $\o$ is $n_2-1$.  
\end{lemma}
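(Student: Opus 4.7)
The plan is to argue by contrapositive. Suppose $\o$ admits a droplet $D_k(\o)$ of range $r_k(\o)=k-\xi_k(\o)\ge n_2$; I will exhibit indices $i\le j$ with $j-i\ge n_2-1$ witnessing $\o\in\cB_2(n_2)$, namely $i:=\xi_k$ and $j:=\xi_k+n_2-1\le k-1$. By the disjointness of droplets (Lemma~\ref{lem:D1}), no column $m\in[\xi_k,k-1]$ can be of type $\uparrow$, since its droplet $D_m$ would contain $\bar\cC_m\subseteq D_k$; the same argument rules out $m=\xi_k$. Hence every column of $[i,j]$ is a $\downarrow$-column.

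Next I reduce the picture to the $DB^{1,0}_{V_{\xi_k,k}}$-process. Since all droplets produced by the algorithm before step $k$ lie inside $\bigcup_{m<\xi_k}\bar\cC_m$, the pair $(\o^{(k-1)},\t^{(k-1)})$ agrees with $(\o,\t)$ on $\bar V_{\xi_k,k}$. Because Duarte updates only involve W, N and S neighbours, infection in any column $m\le k$ depends only on the initial data in columns $\le m$. Combining these two observations, the length-$\ell$ $\psi^{(k-1)}$-infectable interval $I\subseteq \bar\cC_k$ certifying $\Phi(\o)_k=\uparrow$ satisfies $I\cap V\subseteq [Y(\o)\cap V_{\xi_k,k}]^{1,0}_{V_{\xi_k,k}}$. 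The maximality of $\xi_k$ further tells us that after also setting $\bar\cC_{\xi_k}$ healthy no length-$\ell$ interval in $\bar\cC_k$ remains infectable, so by monotonicity (Lemma~\ref{lem:monotonicity}(A)) there must exist $v^*\in I\cap\cC_k$ lying in the original closure but not in the reduced one; equivalently, every infection tree of $v^*$ uses at least one initially infected vertex $u\in \bar\cC_{\xi_k}$.

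The final step extracts the Duarte path. Tracing a root-to-leaf path in such an infection tree from $v^*$ to a leaf $u\in\bar\cC_{\xi_k}$ (at each step descending from an infected vertex to one of the two N, S, W neighbours responsible for its infection) and reversing the resulting sequence yields a Duarte path $\G$ from $u$ to $v^*$ inside $[Y(\o)\cap V_{\xi_k,k}]^{1,0}_{V_{\xi_k,k}}$, because the allowed backward moves reverse precisely to $\vec e_1$ and $\pm\vec e_2$. If $u\in\partial_\perp\cC_{\xi_k}$ I simply drop the first vertex of $\G$, since $u$ can only trigger an infection through its unique neighbour inside $\cC_{\xi_k}$. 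Using the fact that the horizontal coordinate along a Duarte path is non-decreasing, I then truncate $\G$ at its first visit to $\cC_j$, obtaining a Duarte path from $\cC_i$ to $\cC_j$ contained in $V_{i,j}$; the same no-leftward-propagation property shows that this truncated path lies in $[Y(\o)\cap V_{i,j}]^{1,0}_{V_{i,j}}$. Together with the first step this gives $\o\in\cG_{i,j}\cap\bigcap_{m=i}^{j}\{\Phi(\o)_m=\downarrow\}\subseteq\cB_2(n_2)$, contradicting $\o\in\cB_2^c(n_2)$.

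The main technical obstacle will be the infection-tree back-tracing: I have to argue carefully that every infection tree of the lost vertex $v^*$ genuinely contains a leaf in $\bar\cC_{\xi_k}$, and then handle the minor nuisance that this leaf may sit in the perpendicular boundary $\partial_\perp\cC_{\xi_k}$ rather than in $\cC_{\xi_k}$ itself.
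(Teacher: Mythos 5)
Your proposal is correct and follows essentially the same route as the paper's proof: argue by contrapositive, use disjointness of droplets (Lemma \ref{lem:D1}) to get the $\downarrow$-columns, use \eqref{eq:10} to replace $\psi^{(k-1)}$ by $\psi^{(0)}$ on the droplet, and trace the infection of a vertex that is lost upon healing $\bar \cC_{\xi_k}$ backwards to produce the Duarte path witnessing $\cG_{i,j}$. The only (immaterial) difference is bookkeeping in the backward trace: the paper iterates inside the set difference $[Y(\o)\cap V_{i,j}]_{V_{i,j}}^{1,0}\setminus[Y(\o)\cap V_{i+1,j}]_{V_{i+1,j}}^{1,0}$, which automatically terminates at an initially empty site of $\bar\cC_i$, whereas you follow a root-to-leaf branch of an infection tree after observing that some leaf must lie in $\bar\cC_{\xi_k}$.
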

\begin{proof}
Let $\omega \in \Omega$ such that there exists $j \in [N]$ with $r_j(\omega) \geq n_2$. 
   Denote $i = \xi_j(\omega)$.    By the definition of $\xi_j(\omega)=i$, $\bar{\mathcal{C}}_j$ contains an interval $I$ 
   of length at least $\ell$ which is $\psi^{(j-1)}$-infectable by the empty sites in 
   $\bigcup_{k=i}^j \bar{\mathcal{C}}_k$, but not by the empty sites in 
   $\bigcup_{k=i+1}^j \bar{\mathcal{C}}_k$. Definition \ref{def:UV} implies that 
   any $\psi^{(j-1)}$-infectable site is in $V$, hence $I \subseteq \mathcal{C}_j$. 
   Furthermore, for all $k \in \{i,\dots,j-1\}$, $\Phi(\omega)_k=\downarrow$
   (since thanks to Lemma \ref{lem:D1} the droplets are disjoint), so 
   by (\ref{eq:10}) $\psi^{(j-1)}$ and $\psi^{(0)}$ coincide on $\bigcup_{k=i}^j \bar{\mathcal{C}}_k$. 
   Therefore $I$ is $\psi^{(0)}$-infectable by the empty sites in 
   $\bigcup_{k=i}^j \bar{\mathcal{C}}_k$, but not by the empty sites in 
   $\bigcup_{k=i+1}^j \bar{\mathcal{C}}_k$. We deduce that $I \subseteq 
   [Y(\omega)\cap V_{i,j}]_{V_{i,j}}^{1,0}$, but $I \not\subseteq 
   [Y(\omega)\cap V_{i+1,j}]_{V_{i+1,j}}^{1,0}$, see \eqref{def:Vij} for the definition of $V_{i,j}$. Thus, there exists $z \in \mathcal{C}_j$ 
   such that $z \in [Y(\omega)\cap V_{i,j}]_{V_{i,j}}^{1,0} \setminus [Y(\omega)\cap V_{i+1,j}]_{V_{i+1,j}}^{1,0}$. Hence
   $z$ can not be initially empty for the Duarte bootstrap process in $V_{i,j}$, otherwise it would also be 
   empty for the process in $V_{i+1,j}$, hence the process in $V_{i,j}$ infects $z$ with an update rule, 
   so there exists $z' \in \{z-\vec{e_1},z\pm\vec{e_2}\}$ in 
   $[Y(\omega)\cap V_{i,j}]_{V_{i,j}}^{1,0} \setminus [Y(\omega)\cap V_{i+1,j}]_{V_{i+1,j}}^{1,0}$. 
   We can iterate, creating a Duarte path in $[Y(\omega)\cap V_{i,j}]_{V_{i,j}}^{1,0} \setminus 
   [Y(\omega)\cap V_{i+1,j}]_{V_{i+1,j}}^{1,0}$. There can be only a finite number of iterations because 
   there is a finite number of sites in $V_{i,j}$, so we will stop, and the site at which we stop 
   has to be initially empty for the process in $V_{i,j}$, but not for the process in 
   $V_{i+1,j}$, therefore it is in $\bar{\mathcal{C}}_i$. This implies the Duarte path can reach 
   $\mathcal{C}_i$. Consequently, there is a Duarte path in $[Y(\omega)\cap V_{i,j}]_{V_{i,j}}^{1,0} \setminus 
   [Y(\omega)\cap V_{i+1,j}]_{V_{i+1,j}}^{1,0}$ going from $\mathcal{C}_i$ to $\mathcal{C}_j$. 
   We deduce that there exists a Duarte path in $[Y(\omega)\cap V_{i,j-1}]_{V_{i,j-1}}^{1,0}$ 
   from $\mathcal{C}_i$ to $\mathcal{C}_{j-1}$, which is $\mathcal{G}_{i,j-1}$. 
   Since $(j-1)-i \geq n_2-1$, $\omega \in \mathcal{B}_2(n_2)$.

\end{proof}
The next lemma is the basic technical step connecting the evolution of
the coarse-grained variables $\{\Phi(\o)_i\}_{i=1}^N$ under the
Duarte KCM process to an East-like
process. Given $\o\in \O$ and $x\in V,$ let $\o^x$ denote the
configuration $\o$ flipped at $x.$ We say that $x$ is
$\psi^{(k)}(\o)$-unconstrained (or infectable in one step) if $\exists X\in \cU$ such that
$X+x$
is infected for the pair $(\o^{(k)},\t^{(k)})$. 

 \begin{lemma}[East like motion of the arrows]
 \label{lem:D2}  
Fix $\o\in \O$ and let $x\in
\cC_j$. Then: 
\begin{enumerate}[(a)]
\item Suppose that $x$ is $\psi^{(0)}(\o)$-unconstrained. Then $\Phi(\o^x)\neq
  \Phi(\o)$ implies that $j>1$ and $\Phi(\o)_{j-1}=\uparrow;$      
\item For $i>j$ suppose that 
$\Phi(\o)_i=\uparrow, \Phi(\o^x)_i=\downarrow$ and that $D_i(\o)\not\ni x$. Then there exists
$k$ such that $\bar \cC_k\subseteq D_i(\o)\setminus \bar \cC_i$ and 
 $\Phi(\o^x)_k=\uparrow,\Phi(\o)_k=\downarrow$ .
\end{enumerate}
 \end{lemma}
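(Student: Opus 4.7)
The overall approach is to run the algorithm of Definition \ref{def:algo} in parallel on $\o$ and on $\o^x$ and to track how the single-site discrepancy at $x\in\cC_j$ propagates (or not) to the coarse-grained variable $\Phi$. Remark \ref{rem:algo} already matches $\Phi(\o)_k$ and $\Phi(\o^x)_k$ for $k<j$, so in both parts it suffices to analyse what happens from column $j$ onwards.

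For part (a), I would argue by induction on $k\ge j$ that $\Phi(\o)_k=\Phi(\o^x)_k$, maintaining the invariant that the pair $\big(\psi^{(k)}(\o),\psi^{(k)}(\o^x)\big)$ either coincides or differs only at $x$, with $x$ still unconstrained in both. Under this invariant, flipping $x$ does not change the bootstrap closure on $V$, so the infectability test at step $k$ gives the same answer in the two algorithms and, if positive, the same $\xi_k$; hence the decisions and the droplets coincide. If the droplet at step $k$ contains $x$, then $x$ itself is healed and the two iterates become equal; otherwise one must certify that $x$ remains unconstrained. Since the Duarte update family only involves the W, N and S neighbours of $x$, the only sites whose healing could destroy the unconstrainedness of $x$ lie in $\cC_j$ (where $x$ itself lives) or in $\cC_{j-1}$ (for $j=1$ the W site is in $\partial_\parallel V$, frozen at $1$ by $\t_\parallel$, and plays no role). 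Healing $\cC_j$ heals $x$ too and is harmless; healing $\cC_{j-1}$ without healing $\cC_j$ requires a droplet $D_m$ with $j-1\in[\xi_m,m]$ but $j\notin[\xi_m,m]$, which forces $m=j-1$ and hence $\Phi(\o)_{j-1}=\uparrow$, contradicting the hypothesis of (a).

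For part (b), the first observation is that the disjointness of droplets (Lemma \ref{lem:D1}), together with $D_m(\o)\subseteq\bigcup_{r\le m}\bar\cC_r$, forces $\Phi(\o)_k=\downarrow$ for every $k\in[\xi_i(\o),i-1]$: an up-arrow at such a $k$ would yield a droplet $D_k(\o)$ whose columns overlap those of $D_i(\o)$. It is therefore enough to exhibit some $k\in[\xi_i(\o),i-1]$ with $\Phi(\o^x)_k=\uparrow$. Assume, for contradiction, that $\Phi(\o^x)_k=\downarrow$ throughout this range. Applying the same disjointness argument to the $\o^x$-algorithm, no droplet of $\o^x$ up to step $i-1$ meets $\bigcup_{m=\xi_i(\o)}^i\bar\cC_m$; combined with $D_i(\o)\not\ni x$ (which gives $\xi_i(\o)>j$), both $\psi^{(i-1)}(\o)$ and $\psi^{(i-1)}(\o^x)$ coincide with $\o$ on this union of columns. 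By the very definition of $\xi_i(\o)$, there exists a length-$\ge\ell$ interval in $\bar\cC_i$ which is infectable already using only the empty vertices of $\o$ in $V_{\xi_i(\o),i}$; by the monotonicity statements of Lemma \ref{lem:monotonicity}, that same interval is $\psi^{(i-1)}(\o^x)$-infectable, contradicting $\Phi(\o^x)_i=\downarrow$.

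The most delicate point is the bookkeeping of how the algorithm's healings in earlier columns can erase the empty neighbours responsible for the unconstrainedness of $x$. For (a), this is exactly what singles out $\Phi(\o)_{j-1}$: among the droplets of $\o$ that could heal $\cC_{j-1}$ without also touching $\cC_j$, only one is geometrically admissible, namely the range-$0$ droplet at column $j-1$. For (b), the crux is the identification of $\psi^{(i-1)}(\o)$ and $\psi^{(i-1)}(\o^x)$ on $V_{\xi_i(\o),i}$ under the contradiction hypothesis, together with the implicit use of the fact that the Duarte bootstrap never propagates infection eastward, so that empty sites to the right of $\bar\cC_i$ are irrelevant for the infectability test performed there.
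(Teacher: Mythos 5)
Part (b) of your proposal is correct and is essentially the paper's argument: disjointness of the droplets (Lemma \ref{lem:D1}) plus \eqref{eq:10} identifies $\psi^{(i-1)}(\o)$ and $\psi^{(i-1)}(\o^x)$ with $\o$ on $D_i(\o)$ under the contradiction hypothesis, and monotonicity of the bootstrap closure then forces $\Phi(\o^x)_i=\uparrow$. (Your phrase ``never propagates infection eastward'' should read ``westward'' --- column $i$ never feels columns to its right --- but this is property (i) stated after Definition \ref{def:UV} and the intent is clear.)

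Part (a) has a genuine gap at the step ``the infectability test at step $k$ gives the same answer in the two algorithms and, if positive, the same $\xi_k$; hence \dots\ the droplets coincide.'' Equality of the closures $[Y(\psi^{(k)}(\o))\cap V]^{\t}_{V}=[Y(\psi^{(k)}(\o^x))\cap V]^{\t}_{V}$ does follow from $x$ being unconstrained and gives the same answer to the test, but it does \emph{not} give $\xi_k(\o)=\xi_k(\o^x)$: by Definition \ref{def:algo}, $\xi_k$ is computed by re-running the bootstrap on truncated configurations in which all empty vertices of $\cup_{i<\xi}\bar\cC_i$ have been removed, and for the candidate threshold $\xi=j$ this removal heals $x-\vec e_1$ while keeping $x$. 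If the unconstrainedness of $x$ comes from an update set containing $-\vec e_1$ (rather than from $\{\vec e_2,-\vec e_2\}$), the truncated closure for $\o^x$ can strictly contain the one for $\o$, and one can get $\xi_k(\o^x)=j>\xi_k(\o)$. The two droplets then both contain column $j$ but are different sets of columns, so after the healing the iterates differ on all of $\bar\cC_{\xi_k(\o)},\dots,\bar\cC_{j-1}$ rather than only at $x$: your invariant is destroyed. Your bookkeeping only controls which \emph{actual} healings of the algorithm can touch $\cC_{j-1}$; it does not address these \emph{virtual} removals inside the definition of $\xi_k$. The conclusion still holds, because in this bad case the residual discrepancy sits strictly to the left of a block of freshly healed columns and a screening argument (infection cannot cross a healthy column, as in the proof of Lemma \ref{lem:D1}) shows it can never influence any later test; but that extra argument is needed to close your induction and is absent. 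The paper avoids the issue by not tracking droplets at all: using the contrapositive hypothesis $\Phi(\o)_{j-1}=\downarrow$ and \eqref{eq:10} it freezes everything on columns $j-1,\dots,N$ up to the first up-arrow $k_*\ge j$, proves $k_*(\o)=k_*(\o^x)$ by a closure argument, and only then concludes equality of $\Phi$.
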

 \begin{proof}\ 

(a) If $j=1$ then clearly
$\Phi(\o^x)=\Phi(\o)$ because $x$ is $\psi^{(0)}(\o)$-unconstrained.
Consider now the case $j\neq 1$ and assume that
$\Phi(\o)_{j-1}=\downarrow$. We want to prove that in this case
$\Phi(\o^x)=\Phi(\o)$ if $x$ is $\psi^{(0)}(\o)$-unconstrained. 

By construction, the restriction to the first $j-1$ columns of
$\psi^{(k)}(\o^x)$ and $\psi^{(k)}(\o)$ coincide for all $k\in [j-1]$
and, as a consequence, $\Phi(\o)_k=\Phi(\o^x)_k\, \forall k\in [j-1]$. Let $k_*(\o)=\min\{k\ge j\colon
  \Phi(\o)_k=\uparrow\}$ and similarly for $\o^x$. Using \eqref{eq:10} together with
  $\Phi(\o)_{j-1}=\downarrow$, for all $i=j-1,\dots, k_*(\o)-1$ the restriction of
  $\psi^{(i)}(\o)$ to the columns $\bar \cC_{j-1},\dots,\bar\cC_N$
coincides with the same
  restriction of the original pair $\psi^{(0)}(\o)$. In particular, the
  fact that $x$ is $\psi^{(0)}(\o)$-unconstrained implies that $x$ is
also  $\psi^{(k_*(\o)-1)}(\o)$-unconstrained. Analogously for the
configuration $\o^x$. Clearly $k_*(\o^x)\ge k_*(\o).$ If not, starting
from the infection of $\psi^{(j-1)}(\o)$ we can first
make a transition to $\psi^{(j-1)}(\o^x)$ by legally flipping $\o_x$ and from
there infect an interval of length at least $\ell$ of $\bar\cC_{k_*(\o^x)}$ to make it of type
$\uparrow,$ a contradiction with the definition of
$k_*(\o)$. By exchanging the role of $\o,\o^x$ we conclude that
$k_*(\o^x)=k_*(\o).$ Thus $\Phi(\o)_k=\Phi(\o^x)_k$
  for all $k=1\dots,k_*(\o)$ and, a fortiori, for all $k>k_*(\o)$. 

(b) 
By assumption the restriction of $\o,\o^x$ to $D_i(\o)$
coincide. 
If $\Phi(\o^x)_k=\downarrow$ for all the columns in $D_i(\o),$ then  
$\psi^{(i-1)}(\o)=\psi^{(i-1)}(\o^x)$ on the set $D_i(\o)$
implying that $\Phi(\o^x)_i=\Phi(\o)_i.$ Thus there exists a column
$\bar\cC_k\subseteq D_i(\o)\setminus \bar\cC_i$ such that $\Phi(\o^x)_k=\uparrow$
and (by the definition of $D_i(\o)$) $\Phi(\o)_k=\downarrow$.
\end{proof}

\begin{corollary}
\label{cor:D1}
Fix $\o\in \O$ and let $x\in
\cC_j.$ Let also 
$r^x_\infty=\max_i\max(r_i(\o),r_i(\o^x))$ and suppose that $\Phi(\o)_i=\uparrow,\Phi(\o^x)_i=\downarrow,$ with $i-j\ge m(r^x_\infty +1), m\in
\bbN^*$. Then
\[
\#\{k\in \{j,\dots,i\}\colon \Phi(\o)_k=\uparrow\}+\#\{k\in \{j,\dots,i\}\colon \Phi(\o^x)_k=\uparrow\}\ge m.
\]
\begin{proof}
By construction $D_i(\o)\not\ni x.$ 
Lemma \ref{lem:D2} part (b) guarantees that there exists a column
$\bar \cC_k\subseteq D_i(\o)\setminus \bar \cC_i$
such that $\Phi(\o)_k=\downarrow$ and $\Phi(\o^x)_k=\uparrow.$ We can
then iterate by exchanging the role of $\o,\o^x$ and replacing $i$
with e.g. the largest of the labels $k$
above. In conclusion, every $r^x_\infty+1$ steps we are guaranteed to
find a discrepancy between $\Phi(\o)$ and $\Phi(\o^x)$ and the result follows.
\end{proof}
\end{corollary}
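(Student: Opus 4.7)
The plan is to iterate Lemma \ref{lem:D2}(b) alternately, applying it first to $(\o,\o^x)$ at column $i$, then, after swapping the roles of the two configurations, at the column it produces, and so on, walking leftward from $i$ toward $j$ and producing one $\uparrow$-discrepancy (in either $\o$ or $\o^x$) at each step. The key observation driving everything is that each application of Lemma \ref{lem:D2}(b) produces a discrepancy at a new column lying inside the droplet of the current column, and the range of that droplet is at most $r^x_\infty$; hence each iteration moves the active column leftward by at most $r^x_\infty$.

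Concretely, I would apply Lemma \ref{lem:D2}(b) to $(\o,\o^x)$ at index $i$: since $x\in \cC_j$ and $i>j$ we have $x\notin D_i(\o)$, so the lemma yields $k_1\in\{\xi_i(\o),\dots,i-1\}$ at which $\Phi(\o^x)_{k_1}=\uparrow$ and $\Phi(\o)_{k_1}=\downarrow$, with $k_1\ge i-r_i(\o)\ge i-r^x_\infty$. Swapping the roles of $\o$ and $\o^x$ puts me in the same situation at column $k_1$; provided $x\notin D_{k_1}(\o^x)$, which is ensured by the sufficient condition $k_1-r^x_\infty>j$, Lemma \ref{lem:D2}(b) applies again to produce $k_2\in\{k_1-r^x_\infty,\dots,k_1-1\}$ with $\Phi(\o)_{k_2}=\uparrow$ and $\Phi(\o^x)_{k_2}=\downarrow$. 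Iterating yields a strictly decreasing sequence $i=k_0>k_1>k_2>\dots$ with consecutive gaps at most $r^x_\infty$ and an $\uparrow$-discrepancy at each $k_t$, alternating between $\o$ and $\o^x$.

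The arithmetic hypothesis $i-j\ge m(r^x_\infty+1)$ is tailored precisely so that $m-1$ such iterations can be completed while keeping every label strictly above $j+r^x_\infty$, which in turn is the condition ensuring that the next droplet (of range at most $r^x_\infty$) does not cover $x$; the extra $+1$ in the hypothesis, as opposed to the naive $m\,r^x_\infty$, is exactly this cushion. The resulting $m$ distinct labels $k_0,\dots,k_{m-1}$ all lie in $\{j+1,\dots,i\}$ and each contributes a $\uparrow$ either to $\Phi(\o)$ or to $\Phi(\o^x)$, yielding the desired inequality. The main obstacle I expect is precisely this bookkeeping of geometric constraints: one must track the column indices carefully at each iteration to verify that the hypothesis $D_{\cdot}(\cdot)\not\ni x$ of Lemma \ref{lem:D2}(b) remains valid, so that the lemma can be reapplied all the way through the $(m-1)$-th step.
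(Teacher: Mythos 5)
Your proposal is correct and follows essentially the same route as the paper: iterate Lemma \ref{lem:D2}(b) with the roles of $\o$ and $\o^x$ alternating, noting that each step moves left by at most $r^x_\infty$ columns so that the hypothesis $i-j\ge m(r^x_\infty+1)$ guarantees $m$ discrepancy labels before the droplets can reach $\cC_j$. Your version merely makes explicit the index bookkeeping that the paper leaves implicit.
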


We are now ready to conclude the proof of Claim \ref{claim:20}.
\begin{proof}[Proof of Claim \ref{claim:20}]
To prove the claim, let $\g=(\o^{(0)},\dots, \o^{(n)})$ and let us
consider the sequence $\{\eta(\o^{(j)})\}_{j=0}^n$. The path
$\varphi(\g)=(\eta^{(0)},\dots,\eta^{(k)})$ is then defined
recursively by setting $\eta^{(0)}:=\eta(\o^{(0)})$ and
$\eta^{(j)}:=\eta(\o^{(i_j)}),$ where $i_j=\min\{i>i_{j-1}\colon
  \eta(\o^{(i)})\neq \eta^{(j-1)}\}$ with $i_0=0,$ and by stopping the procedure as soon as
  the set $\{\eta\in \{0,1\}^M\colon \eta_M=1\}$ is reached. In other words, we only keep
  the elements of the 
  sequence $\eta(\o^{(j)}),j=0,\dots,n,$ which change w.r.t. the
  previous element.
Properties (1)  of $\varphi(\g)$ follows immediately from the
fact that $\g$ starts in $\O_{\downarrow}$ and ends in $A_{\ell}$. Property (2) follows from the fact that
$\g\cap \cB_1(n_1)=\emptyset$. We now
verify the key property (3). 

Let $(\eta,\eta')$ be an edge of $\varphi(\g)$
and let $(\o,\o')$ be the edge of $\g$ such that
$\eta(\o)=\eta$ and $\eta(\o')=\eta'$. By construction $\Phi(\o)\neq
\Phi(\o')$. Let also $x\in \cC_a$ be such
that $\o'=\o^x$ and say that $a$ belongs to $j^{th}$-block. Clearly, $\eta_i=\eta_i'$ for all
$i<j$. Moreover, Corollary \ref{lem:D0} and Corollary \ref{cor:D1}
  imply that $\Phi(\o)_v=\Phi(\o')_v$ for all $v\in
  \cup_{i\ge j+2}B_i$ (if $j+2\le N$), since otherwise either $\o$ or $\o'$ would have at
  least $\lfloor m/2(r^x_\infty+1)\rfloor \ge \lfloor
  m/2n_2\rfloor =2n_1$ up-arrows, contradicting the assumption $\g\cap
  \cB_1(n_1)=\emptyset$. In particular, $\eta_i=\eta'_i$ for all $i\ge
  j+2$. To complete our analysis we distinguish between two cases.   
\begin{enumerate}[1)]
\item $a>1.$ In this case $x$ must be $\psi^{(0)}(\o)$-unconstrained and part (a) of Lemma \ref{lem:D2} together with
  $\Phi(\o)\neq \Phi(\o')$ implies that
  $\Phi(\o)_{a-1}=\Phi(\o^x)_{a-1}=\, \uparrow.$ If
  $a$ is not the beginning of the block $B_j$ then, by definition, $\eta_j=\eta'_j=1$. Thus $\eta,\eta'$ must differ exactly in
  the $(j+1)^{th}$-block and they are both equal
  to one in previous one as required. If $a$ is the beginning of the
  $j^{th}$-block, then necessarily $j>1$. Moreover
  $\Phi(\o)_{a-1}=\Phi(\o^x)_{a-1}=\uparrow$ implies that
  $\eta_{j-1}=\eta'_{j-1}=1$. By the same reasoning as before, using Corollary
  \ref{cor:D1} and  Lemma \ref{lem:D0} (recall that  $\omega\in \cB^c(n_2)$)  we get that $\Phi(\o)_v=\Phi(\o')_v$ for all $v\in
  \cup_{i>j}B_i.$ Thus $\eta_i=\eta'_i$ for all $i\neq j$ and $\eta_{j-1}=\eta'_{j-1}=1$ as required.
\item $a=1$. Again Corollary
  \ref{cor:D1} guarantees that $\Phi(\o)_i=\Phi(\o^x)_i$ for all $i\in
  \cup_{j=2}^NB_j$ so that $\eta_b =\eta'_b$ for all $b\ge 2$.  
\end{enumerate}
\end{proof}

\subsection{Density of droplets and proof of Proposition \ref{prop:P2}}

The core of the proof of Proposition \ref{prop:P2} consists in bounding from
above the probabilities of the events
$\cB_1,\cB_2$ defined in \eqref{eq:B1},\eqref{eq:B2}.  
The first key bound is Lemma \ref{lem:D4}, that says that the probability
that the $DB^{1,0}_V$-process restricted to an arbitrary number of
consecutive columns of $V$ is able to infect any given interval of the last column of
length $\ell$ is $e^{-\O((\log q)^2/q)}$. The second key ingredient is
Lemma \ref{lem:D5} that bounds from above the
probability of the event $\cB_2(n_2-1)$.
Before stating the lemmas we need some additional notation. 

Given $1\le i\le j\le N,$ 
let $\L =\cup_{k=i}^j 
\cL_k,$ where, for each $k=i,\dots,j,$ $\cL_k\supseteq \cC_k$ is a (finite)
interval of $\{(k-N,j)\colon j\in \bbZ\}.$
Let also $I\subseteq \cC_j$ be an
interval of length $\ell$. The
basic event that we
will consider is  
\[
\cO^\t_\L(I)=\{\o\in \O\colon I\subseteq [Y(\o)\cap \L ]_{\L}^\t\},
\]
where we recall $Y(\o)$ is the set of infected vertices of $\o$.  
Notice that $\cO^\t_\L(I)$ is an increasing event (\ie its indicator function is
an increasing function) 
w.r.t. to the partial order: $\o\prec \o'$ iff $\o'_x\le \o_x\
\forall x.$ 
Our first main lemma reads as follows. 
\begin{lemma}[Density of up-arrows]
\label{lem:D4}  
Choose the basic scales $N,\ell,n_1,n_2$ as in
 \eqref{def:ell},\eqref{def:N} and \eqref{eq:8}. Then there exists $c>0$ such that, for any $\e>0$
  sufficiently small and any $1\le i\le j\le N$,
\[
\max_I\mu(\cO^{1,0}_{V_{i,j}}(I))\le e^{-c(\log q)^2/q},\quad \text{as }
q\to 0,
\]
where $V_{i,j}=\cup_{k=i}^j\cC_k$.
\end{lemma}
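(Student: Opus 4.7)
The strategy is to prove the bound by identifying a combinatorial ``critical droplet'' inside $V_{i,j}$ whose presence is forced whenever the event $\cO^{1,0}_{V_{i,j}}(I)$ occurs, and then to bound the probability of such a droplet via a Mountford-style estimate combined with a union bound over its possible locations.

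First, I would exploit the structure of the Duarte rule under the $(1,0)$ boundary condition. Since $\t_\parallel\equiv 1$, the left boundary of $V_{i,j}$ is healthy, so infection in the leftmost column $\cC_i$ can propagate only through the vertical neighbours: a site in $\cC_i$ can become empty only if both its north and south neighbours are empty. In columns $\cC_k$ for $k>i$, the west neighbour (inside $\cC_{k-1}$) may help, but the infection still propagates primarily by combining vertical propagation with horizontal \emph{Duarte paths} (Corollary \ref{cor:duarte-path}). The monotonicity of $\cO^{1,0}_{V_{i,j}}(I)$ in the empty sites of $\o$ allows us to work with minimal infecting configurations throughout.

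Second, I would argue that the event $\cO^{1,0}_{V_{i,j}}(I)$ forces $\o$ to contain a \emph{critical droplet} --- a structured collection of initially empty sites in $V_{i,j}$ that by itself (using only the deterministic infection supplied by $\partial_\perp V_{i,j}$) already causes an infected vertical interval of length $\ell$ inside $\cC_j$. Such a droplet can be extracted from any infecting configuration by iteratively applying the screening property (Lemma \ref{lem:screening}) to peel off unnecessary empty sites, the Duarte-path lemma (Corollary \ref{cor:duarte-path}) to identify the horizontal transport, and monotonicity (Lemma \ref{lem:monotonicity}) to control the effect of the $(1,0)$ boundary. By the Mountford/BCMS analysis of Duarte bootstrap percolation \cite{Mountford, BCMS-Duarte}, any such droplet must contain at least $\Theta(\log(1/q)/q)$ initially empty sites arranged in a specific hierarchical pattern with a fixed ``anchor'' location, giving $\mu$-probability at most $e^{-c_0(\log q)^2/q}$ for some universal $c_0>0$.

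Third, I would union bound over the possible anchor positions of the critical droplet inside $V_{i,j}$. Using $|V_{i,j}|\le N\cdot(2N^2+1)\le e^{3\eps (\log q)^2/q}$ (from the choice of $N$ in \eqref{def:N}) and the per-position bound $e^{-c_0(\log q)^2/q}$, we obtain
$$\mu\!\left(\cO^{1,0}_{V_{i,j}}(I)\right) \;\le\; e^{3\eps(\log q)^2/q}\cdot e^{-c_0(\log q)^2/q} \;\le\; e^{-c(\log q)^2/q},$$
with $c:=c_0-3\eps>0$, provided $\eps$ is chosen small enough. Since the bound is independent of the position of $I$ inside $\cC_j$, the $\max_I$ is handled trivially.

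The hard part will be the second step: making the critical-droplet extraction precise in the finite-volume setting with $(1,0)$ boundary condition. The hierarchical decomposition of minimal infecting configurations in Duarte bootstrap is technically delicate (see \cite{BCMS-Duarte}); adapting it here requires carefully tracking the possible entry points of the infection through $\partial_\perp V_{i,j}$, especially near the top and bottom of the shorter right-most columns $\cC_k$, whose height $\sim N^2-(k-1)N$ varies with $k$. Since $\t_\parallel\equiv 1$ only \emph{removes} a potential source of infection from the left and the columns remain much taller than the critical scale $\ell$, the standard Duarte estimates should transfer with only bookkeeping changes; once this is done the union-bound argument is routine.
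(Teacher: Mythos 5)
Your proposal takes a genuinely different route from the paper, and the route you choose has a gap at exactly the step you flag as ``the hard part''. Your second step asserts that the event $\cO^{1,0}_{V_{i,j}}(I)$ forces the presence of a rigid, anchored critical droplet of $\Theta(\log(1/q)/q)$ empty sites whose probability is at most $e^{-c_0(\log q)^2/q}$, and you attribute this to \cite{Mountford, BCMS-Duarte}. But neither reference provides such a deterministic, finite-volume extraction statement: their results concern the infection \emph{time} of the origin for the process on $\bbZ^2$, and their lower-bound arguments do not hand you a combinatorial characterisation of all configurations that infect a vertical $\ell$-interval inside $V_{i,j}$ under the mixed $(1,0)$ boundary condition. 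The danger is not merely bookkeeping: a naive first-moment count of $m=\Theta(\log(1/q)/q)$ empty sites scattered in $V_{i,j}$ gives $\binom{|V_{i,j}|}{m}q^m$, which is enormous since $|V_{i,j}|=e^{\Theta(\e(\log q)^2/q)}$; the entire difficulty is to show the empty sites must be arranged rigidly enough to kill this entropy, and that is tantamount to redoing the hardest part of the Duarte lower bound from scratch. As written, the proposal replaces the lemma by an unproved statement of comparable depth.

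The paper argues in the opposite direction precisely to avoid this. After reducing to an all-healthy boundary condition on a full rectangle $\L_{1,j}$ (via monotonicity, the screening property and FKG, at a harmless cost $O(q^{-2})$), it proves a contrapositive claim: if $\max_I\mu(\cO^1_{\L_{1,j}}(I))\ge e^{-(\log q)^2/16q}$, then by tiling $\bbZ^2$ with $t=O(p(N,\ell)^{-1})$ translates $\L^{(k)}$ of the rectangle and using FKG together with two increasing ``transport'' events (empty sites in every translate of $\hat I$, plus an upward empty stair that lets the infected interval grow from length $\ell$ to length $\lceil 1/q^3\rceil$), the full-plane Duarte bootstrap process would infect the origin in time $O(N^3)e^{(\log q)^2/16q}$, contradicting the BCMS--Duarte lower bound $T(\cU)\ge e^{(1-o(1))(\log q)^2/8q}$ once $\e<1/48$. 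This uses the known bootstrap result purely as a black box and never needs to classify infecting configurations. If you want to salvage your direct approach you would need to actually construct the droplet-extraction argument (e.g.\ a hierarchy/spanning analysis adapted to $V_{i,j}$ with $(1,0)$ boundary), which is a substantial piece of work, not a citation.
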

\begin{proof}[Proof of Lemma \ref{lem:D4}]
Fix $1\le i\le j\le N$ together with an interval $I\subset \cC_j$ of
length $\ell$ and let 
\[
\L_{1,j}=\cup_{i=1}^{j}\{(i,k)\colon |k|< N^2\}-N\vec e_1.
\] 
We first claim that 
\begin{align}
  \label{eq:12}
 \mu(\cO^{1,0}_{V_{i,j}}(I))\le \mu(\cO^{1,0}_{V_{1,j}}(I))\le O(1/q^2)\mu(\cO^1_{\L_{1,j} }(I)) \quad \text{as }
q\to 0.
\end{align}
The first inequality follows from (C) in Lemma \ref{lem:monotonicity}. To prove the second one, let $G=\cap_{k=1}^{j-1} G_k,$ where $G_k$
denotes the event that there is an empty site within the
first $\lfloor N/3\rfloor $ sites and within the last $\lfloor
N/3\rfloor $ sites of $\cC_k$.
Then, for any choice of the constant $\e$ appearing in \eqref{eq:8}, 
\begin{equation}
  \label{eq:21bis}
\mu(G^c)\le 2N(1-q)^{\frac N3 -1}=o(1) \quad \text{as } q\to 0.
\end{equation}
For any $\o\in G$ and any boundary condition $\t$ for $V_{1,j}$ such that $\t\equiv 0$ on
$\partial_\perp \cC_j$ and $\t_\parallel\equiv 1,$ the screening
property and translation invariance imply that 
$[\,Y(\o)\cap V_{1,j}\,]_{V_{1,j}}^\t \cap \cC_j$ does not depend on $\t.$ Hence,
\begin{equation}
  \label{eq:18}
\cO^{1,0}_{V_{1,j}}(I)\cap G=\cO^\t_{V_{1,j}}(I)\cap G.
\end{equation}
Choose $\t$ equal to one everywhere except for $\partial_\perp \cC_j$
where it is equal to zero. Using the FKG inequality and \eqref{eq:18},
\begin{align*}
\mu\big(\cO^{1,0}_{V_{1,j}}(I)\big)&\le
                                     \mu\big(\cO^{1,0}_{V_{1,j}}(I)\tc
                                     G\big) =\mu(\cO^{\t}_{V_{1,j}}(I)\tc G)\\
&\le (1+o(1)) \mu\big(\cO^{\t}_{V_{1,j}}(I)\big).
  \end{align*}
We now observe that, starting from $Y(\o),$  we can construct the set $[Y(\o)\cap V_{1,j}]_{V_{1,j}}^\t\cap
\cC_j$ as follows. We first output the set $[Y(\o)\cap V_{1,j-1}]_{V_{1,j-1}}^1$
and we let $\bar \t\in \{0,1\}^{\partial \cC_j}$ be such
that $\bar \t_\perp\equiv 0$ and
$\{x\in \partial_\parallel \cC_j\colon \bar\t_x=0\}= [Y(\o)\cap
V_{1,j-1}]_{V_{1,j-1}}^1\cap \partial_\parallel\cC_j.$ Then we
output the set $[Y(\o)\cap \cC_j]_{\cC_j}^{\bar\t}$ which clearly coincides with $[Y(\o)\cap V_{1,j}]_{V_{1,j}}^\t\cap
\cC_j$. 

Monotonicity and a moment of thought imply that if we repeat the above
construction with $V_{1,j-1},\cC_j$ replaced by
$\L_{1,j-1},\ \{(j-N,k)\colon |k|< N^2\}$ and 
$Y(\o)$ replaced by $Y(\o)\cup \partial_\perp\cC_j,$ then the final
infection in $\cC_j$ cannot decrease. Hence
\begin{align*}
\mu\big(\cO^{\t}_{V_{1,j}}(I)\big)\le \mu\big(\cO^1_{\L_{1,j} }(I)\tc
  \o_{\partial_\perp \cC_j}\equiv 0\big)\le \mu\big(\cO^1_{\L_{1,j} }(I)\big)/q^2,
  \end{align*}
and \eqref{eq:12} follows.

Let now $T(\cU)$ be the median of the infection time of the origin (or of any
other vertex of $\bbZ^2$  because of translation invariance) for the Duarte bootstrap
process in $\bbZ^2$ started from $Y(\o)$ where $\o$ has law $\mu,$ and write 
\begin{equation}
  \label{eq:13}
p(N,\ell):=\max_{j\le N}\max_I\mu(\cO^1_{\L_{1,j}}(I)),  
\end{equation}
where
$\max_I$ is taken over all intervals $I\subset \cC_j$ of length $\ell$. 
\begin{claim}
\label{claim:1}If $\e<1/4$ then, for all $q$ small enough, 
\begin{equation}
  \label{eq:16}
p(N,\ell)\ge e^{-\frac{1}{16q}\log(q)^2 },
\end{equation}
implies 
\begin{equation*}
  T(\cU)\le O(N^3) e^{\frac{1}{16q}\log(q)^2 }.
\end{equation*}
\end{claim}
Before proving the claim we conclude the proof of Lemma
\ref{lem:D4}. It follows from the main result of \cite{BCMS-Duarte}
together with a standard (and straightforward) argument that 
\[
T(\cU)\ge e^{(1-o(1))\log(q)^2/8q }\quad \text{as }q\to 0,
\]
implying that for all $q$ small enough 
\[
p(N,\ell)\le e^{-\frac{1}{16q}\log(q)^2 },
\]
if $\e<1/48.$ 
\end{proof}
\begin{proof}[Proof of the claim]
In the sequel it will help to refer to Figure \ref{fig:2} as a visual
guide for the various definitions. 
Fix $q$ arbitrarily small and let $j$ be such that there exists an interval
$I\subset \cC_j$ of length $\ell$ such that 
\begin{equation}
  \label{eq:pippo}
\mu(\cO^1_{\L_{1,j}}(I))\ge e^{-\frac{1}{16q}\log(q)^2 }.
\end{equation}
Using the symmetry w.r.t. the horizontal axis we can assume that $x_I$, the lowest site of $I,$ has
non positive height. Write $\L^{(i)}:=\L_{1,j}-ij\vec e_1$ and let
$\cM_t=\cup_{i=0}^t\L^{(i)}, $ where $t=10\lceil
\max(p(N,\ell)^{-1},8/q^4)\rceil.$ 
\begin{figure}[ht]
  \centering
\begin{tikzpicture}[>=latex,scale=0.8]
[x=0.5cm, y=0.5cm]
\begin{scope}
\draw [help lines, opacity=0.3] (-11,0) grid [step=0.2] (2,6);
\foreach \i in {0,1,2,3,4,5} {%
\draw [thick] (-{\i*2.2},0) rectangle (-{\i*2.2}+2,6); 
}%

\foreach \j in {5,6,7,8,9}{%
\fill (2,{2+\j*0.2}) circle (1.7pt);
\draw [fill=lightgray] (-9,{2+\j*0.2}) circle (2pt);
\draw [fill=lightgray] (-6.8,{2+\j*0.2}) circle (2pt);
}%

\draw [fill=lightgray] (-9+0.2,3.2) circle (1pt);
\draw [fill=lightgray] (-9+0.4,3) circle (1pt);
\draw [fill=lightgray] (-9+0.6,3.4) circle (1pt);

\begin{scope}[shift={(0.6,0.2)}];
\draw [fill=lightgray] (-9+0.2,3.2) circle (1pt);
\draw [fill=lightgray] (-9+0.4,3) circle (1pt);
\draw [fill=lightgray] (-9+0.6,3.4) circle (1pt);
\end{scope}

\begin{scope}[shift={(1.2,0)}];
\draw [fill=lightgray] (-9+0.2,3.2) circle (1pt);
\draw [fill=lightgray] (-9+0.4,3) circle (1pt);
\draw [fill=lightgray] (-9+0.6,3.4) circle (1pt);
\draw [fill=lightgray] (-8.2,3.6) circle (1pt);
\end{scope}

\draw [thick,dotted,->](-8.2,3.8)--(-7.5,3.8);
\draw [thick,dotted,->](-8.2,3)--(-7.5,3);

\foreach \j in {5,6,7,8,9,10}{%
\draw [fill=lightgray] (-5.8,{2+\j*0.2}) circle (2pt);
}%

\draw [thick,dotted,->](-6.4,3.6)--(-5.9,3.6);
\draw [thick,dotted,->](-6.4,3.2)--(-5.9,3.2);

\foreach \j in {5,6,7,8,9,10,11}{%
\draw [fill=lightgray] (-5,{2+\j*0.2}) circle (2pt);
}%

\draw [thick,dotted,->](-5.6,4)--(-5.1,4);
\draw [thick,dotted,->](-5.6,3.6)--(-5.1,3.6);

\foreach \j in {5,6,7,8,9,10,11,12}{%
\draw [fill=lightgray] (-3.8,{2+\j*0.2}) circle (2pt);
}%

\draw [thick,dotted,->](-4.6,4)--(-4,4);
\draw [thick,dotted,->](-4.6,3.6)--(-4,3.6);

\foreach \j in {5,6,7,8,9,10,11,12,13}{%
\draw [fill=lightgray] (-3.2,{2+\j*0.2}) circle (2pt);
}%

\foreach \j in {5,6,7,8,9,10,11,12,13,14}{%
\draw [fill=lightgray] (-2.4,{2+\j*0.2}) circle (2pt);
}%

\draw [thick,dotted,->](-2.2,4.8)--(1.8,4.8);
\draw [thick,dotted,->](-2.2,3)--(1.8,3);

\fill (-7+0.2,4) circle (2.3pt);
\fill (-6+0.2,4.2) circle (2.3pt);
\fill (-5.2+0.2,4.4) circle (2.3pt);
\fill (-4+0.2,4.6) circle (2.3pt);
\fill (-3.4+0.2,4.8) circle (2.3pt);

\foreach \j in {5,6,7,8,9,10,11,12,13,14}{%
\draw[opacity=0.5,fill=lightgray] (2,{2+\j*0.2}) circle (2.3pt);}


\draw [fill=lightgray] (-2.2,3.2) circle (1pt);
\draw [fill=lightgray] (-2,3.6) circle (1pt);
\draw [fill=lightgray] (-1.8,4) circle (1pt);
\draw [fill=lightgray] (-1.6,3.8) circle (1pt);
\draw [fill=lightgray] (-1.4,3.2) circle (1pt);

\begin{scope}[shift={(0,0.4)}];
\draw [fill=lightgray] (-1.2,3.2) circle (1pt);
\draw [fill=lightgray] (-1,3.6) circle (1pt);
\draw [fill=lightgray] (-0.8,4) circle (1pt);
\draw [fill=lightgray] (-0.6,3.8) circle (1pt);
\draw [fill=lightgray] (-0.4,3.2) circle (1pt);
\end{scope}
\begin{scope}[shift={(1,-0.2)}];
\draw [fill=lightgray] (-1.2,3.2) circle (1pt);
\draw [fill=lightgray] (-1,3.6) circle (1pt);
\draw [fill=lightgray] (-0.8,4) circle (1pt);
\draw [fill=lightgray] (-0.6,3.8) circle (1pt);
\draw [fill=lightgray] (-0.4,3.2) circle (1pt);
\end{scope}

\begin{scope}[shift={(2,0.6)}];
\draw [fill=lightgray] (-1.2,3.2) circle (1pt);
\draw [fill=lightgray] (-1,3.6) circle (1pt);
\draw [fill=lightgray] (-0.8,4) circle (1pt);
\draw [fill=lightgray] (-0.6,3.8) circle (1pt);
\draw [fill=lightgray] (-0.4,3.2) circle (1pt);
\draw [fill=lightgray] (-0.2,2.6) circle (1pt);
\end{scope}

\draw [ultra thick, dotted] (-11.2,3)--(-12,3);

\node at (-10,2) {$\L^{(\nu)}$};
\node at (1,2) {$\L_{1,j}$};
\node at (-1,2) {$\L^{(1)}$};
\node at (-5.4,2) {$\L^{(i)}$};
\node at (2.3,3.4) {$\Big\}$};
\node at (2.6,3.4) {$I$};
\node at (3.2,3.9) {$\hat I$};
\node at (2.8,3.9) {$\Biggg\}$};
\end{scope}
\end{tikzpicture}
\caption{A subset of the collection of boxes $\L^{(i)}$ forming $\cM_t$. On the
  last column of $\L_{1,j}$ the two intervals $\hat I\supset I$. The
  little gray dots denote suitable sparse single infected sites, one
  for each relevant column, and they have been drawn only for the
  initial and final stage of the infection process. The large gray dots on the right boundary of $\L^{(\nu)}$ represent a shifted copy
  of $I$ which is infected by  the $DB_{\L^{(\nu)}}^{1}$-process. This infected interval propagates to the right
  until reaching the first site of the empty
  upward stair (black dots). At this stage the interval grows
  vertically by one unit. This process continues until the interval
  has become a shifted copy of the interval $\hat I$. The latter interval is able to continue moving to the
right until infecting the interval $\hat I$.}
\label{fig:2}
\end{figure}
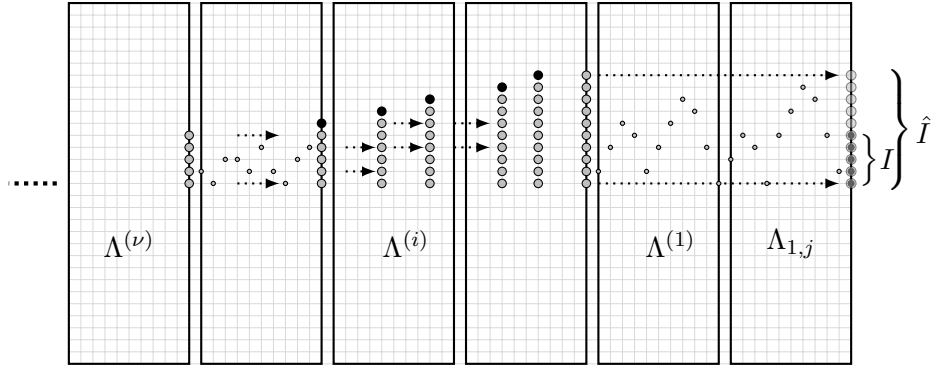
We shall
define two increasing events $\cG_1,\cG_2\subset \O,$ depending only on
$\o\restriction_{\cM_t},$ such that:
\begin{enumerate}[(a)]
\item if $\o\in \cG_1\cap \cG_2$ then the
  Duarte bootstrap process in $\bbZ^2$ is able to infect $x_I$ within time $2jt(2N^2-1)$. 
\item $\mu\big(\cG_k\big)>3/4,\ k=1,2$.
\end{enumerate}
Using the FKG inequality, 
$
\mu\big(\cG_1\cap \cG_2\big)\ge
\mu\big(\cG_1\big) \mu\big(\cG_2\big) > 1/2.
$
Hence
\[
T(\cU)\le 2jt(2N^2-1)\le 40 N^3 \Big(e^{\frac{1}{16q}\log(q)^2 }+1\Big).
\] 
In order to define $\cG_1,\cG_2,$
let $\hat I\supset I$ be the interval of $\cC_j$ of length $\lceil 1/q^3\rceil$
and whose lowest site is $x_I$. Then:
\begin{align*}
\cG_1&=\{\forall\, k\in [jt], \text{\it the
interval $\hat I-(k-1)\vec e_1$ contains an empty vertex}\};\\
\cG_2&=\{\exists\, k\in [jt]\colon \text{\it 
       the $DB_{\cM_t}^1$-process starting from $Y(\o)\cap \cM_t$ is able to infect $\hat I-k\vec e_1$}\}.
\end{align*}
We now verify properties (a) and (b) above. We observe that the event $\cG_2$
guarantees that there exists a leftmost interval of the form $\hat
I-k\vec e_1$ which is infected by the Duarte
bootstrap process within time $tj(2N^2-1)$\footnote{The worst case is when sites are infected one by one.}. The event $\cG_1,$ together with the definition
of the Duarte update family $\cU,$ makes sure that the infection of
$\hat I-k\vec e_1$ gets propagated forward to $\hat I-(k-1) \vec e_1,\dots, $ until it
reaches the original interval $\hat I$ in at most $tj (2N^2-1)$
steps. Hence, within time $2jt(2N^2-1)$ the vertex $x_I$ becomes infected and
(a) follows.

It remains to verify (b). The union bound over $k$ gives that for any $\e>0$
\[
\mu\big(\cG_1^c\big)\le jt(1-q)^{\lceil 1/q^3\rceil}\le e^{-\O(1/q^2)}
\quad \text{as } q\to 0,
\]
using \eqref{eq:16} and $j\le N$.

In order to bound from below $\mu\big(\cG_2\big),$ write
\[
\nu:=\min\{\max\{k\in [t/2,t]\colon \text{ the event
  $\cO^1_{\L^{(k)}}(I-k j\vec e_1)$ occurs}\},\,\,\infty\},
\]
and let $\cF=\cap_{i=1}^3 \cF_i$ where, on the event $\{\nu<+\infty\}$: 
{\it 
\begin{enumerate}[{-}]
\item $\quad \cF_1=\{\nu\le t\}$;
\item $\quad \cF_2=\{\forall k\in \big[\lceil 2/q^4\rceil\big]$ the interval $I-\nu j\vec e_1
  +k\vec e_1$ contains an empty vertex$\}$;
\item $\quad \cF_3=\{\exists$ an \emph{upward empty stair} of
  $n=\lceil1/q^3\rceil$ sites belonging to the first
  $\lceil 2/q^4\rceil$ columns of $\cM_t$ immediately to the right of
  $\L^{(\nu)}$, \ie a sequence $(x_1,\dots,x_n)$ of empty sites  of
  the form
  $x_m=(j_m,h_I +m),$ where $h_I$ is
  the height of the uppermost site of $I$ and $\{j_m\}_{m=1}^n$ is a strictly increasing
  sequence$\}$. 
\end{enumerate}
}
We begin by observing that $\cF\subseteq \cG_2$. In fact, $\cF_1$
guarantees the right amount of infection of the last column of
$\L^{(\nu)}$ under healthier boundary condition than those required
by $\cG_2$. $\cF_2$ ensures that such an infection 
propagates over to the first $\lceil 2/q^4\rceil$ columns to the right
of
$\L^{(\nu)}$ while $\cF_3$ guarantees that each time the infection meets an empty
site of the upward stair it grows vertically
by one unit (see Figure \ref{fig:2}). Since the stair contains
$\lceil1/q^3\rceil$ sites, the $\lceil 2/q^4\rceil^{th}$-column of
$\cM_t$ to the right of $\L^{(\nu)}$ contains an infected interval
which is the appropriate horizontal translation of the interval $\hat
I$ and the inclusion $\cF\subseteq \cG_2$ follows. 

Conditionally on $\{\nu = k\}$, the events $\cF_2,\cF_3$  coincide
with two increasing events depending only on sites to
the right of $\L^{(k)}$. Hence, using the FKG inequality,
\begin{gather*}
\mu(\cG_2)\ge \mu(\cF)=\sum_{k\in [t/2,t]}\mu(\nu=k)\mu(\cF_2\cap
\cF_3\tc \nu=k)
\\
\ge \sum_{k\in [t/2,t]}\mu(\nu=k)\mu(\cF_2\tc \nu=k) \mu(\cF_3\tc \nu=k).
\end{gather*}
A union bound gives that, uniformly in $k\in [t/2,t],$
\[
\mu(\cF^c_2\tc \nu=k)\le \lceil 2/q^4\rceil (1-q)^{\ell} \le \lceil 2/q^4\rceil
q^{1/\e}(1+o(1))=o(1), 
\]
if $\e < 1/4$. Using the fact that $X(\o):=\min\{i\ge 1\colon
\o_{(i,+1)}=0\}$ is a geometric random variable of parameter $q$, it
is easy to check that 
\[
\mu(\cF^c_3\tc \nu=k)\le \bbP\Big(\sum_{i=1}^{n}X_i > \lceil 2/q^4\rceil\Big),
\]
where $\{X_i\}_{i=1}^{n}$ are i.i.d copies of $X$. A standard
exponential Markov inequality with $\l=\a q, \a\in (0,1),$ gives
\begin{gather}
\label{eq:LD}
\bbP\Big(\sum_{i=1}^{n}X_i > \lceil 2/q^4\rceil\Big)\le e^{-\l \lceil
  2/q^4\rceil}\Big(\bbE\big(e^{\l X}\big)\Big)^{n}\nonumber\\
\le
\Big(\frac{e^{-2\a}}{(1-\a)(1+o(1))}\Big)^{1/q^3}<\big(1-\a/2\big)^{1/q^3},
\end{gather}
for $\a$ small enough.
In conclusion, if $\e<1/4$, 
\begin{gather*}
\mu(\cG_2)\ge (1-o(1))\mu(\cF_1)\\
\ge (1-o(1))\big(1-\big(1-\mu(\cO^1_{\L_{1,j}}(I))\big)^{t/2}\big)\ge (1-o(1))(1-e^{-4}) 
\end{gather*}
because of \eqref{eq:pippo} and our choice of $t$. That concludes the
proof of property (b). 
\end{proof}
We now turn to the second basic lemma. Recall the definition
\eqref{eq:B2} of the
event $\cB_2.$
\begin{lemma}
\label{lem:D5}  
Choose the basic scales $N,\ell,n_1,n_2$ 
as in
 \eqref{def:ell},\eqref{def:N} and \eqref{eq:8}. Then, for $\e$ small enough,
  \begin{equation}
    \label{eq:19}
\mu\big(\cB_2(n_2-1)\big)\le e^{-\O(1/q^5)},\quad \text{as }
q\to 0.
  \end{equation}
\end{lemma}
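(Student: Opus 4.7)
The plan is to combine a union bound over pairs $(i,j)$ with a product decomposition of the event $\cG_{i,j}$ into $\Theta(n_2/\ell)$ nearly independent ``local'' events, each of probability at most $e^{-c(\log q)^2/q}$ via Lemma \ref{lem:D4}. First I would write
\[
\mu\big(\cB_2(n_2-1)\big)\;\le\;\sum_{\substack{1\le i\le j\le N\\ j-i\ge n_2-2}} \mu\big(\cG_{i,j}\big),
\]
and observe that the number of pairs is at most $N^2=e^{2\e(\log q)^2/q}$. Since $(\log q)^2/q \ll 1/q^5$ as $q\to 0$, this prefactor is absorbed into the target exponent $e^{-\O(1/q^5)}$, provided each $\mu(\cG_{i,j})$ is bounded by $e^{-\O(|\log q|/q^6)}$.

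Fix $(i,j)$ with $\Delta:=j-i+1\ge n_2$ and partition $[i,j]$ into $m=\lfloor\Delta/\ell\rfloor\ge n_2/(2\ell)$ disjoint sub-ranges $B_k=[a_k,a_k+\ell-1]$, with $a_{k+1}=a_k+\ell$, corresponding to pairwise disjoint sub-regions $V_{a_k,a_k+\ell-1}\subset V_{i,j}$. The core of the argument is to associate to each $B_k$ a local event $F_k$, depending only on $\o\restriction_{V_{a_k,a_k+\ell-1}}$, such that $\cG_{i,j}\subseteq F_k$ and $\mu(F_k)\le e^{-c(\log q)^2/q}$. Granted this, the independence of $\{F_k\}_k$ under the product measure $\mu$ gives
\[
\mu\big(\cG_{i,j}\big)\;\le\;\mu\bigl(\cap_k F_k\bigr)\;=\;\prod_{k=1}^m\mu(F_k)\;\le\;\bigl(e^{-c(\log q)^2/q}\bigr)^{m}\;=\;e^{-\O(|\log q|/q^6)},
\]
which combined with the $N^2$ prefactor yields the claim.

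To define $F_k$ and prove the inclusion $\cG_{i,j}\subseteq F_k$, I would proceed as follows. The Duarte path $\G$ witnessing $\cG_{i,j}$, restricted to the columns of $B_k$, yields a Duarte sub-path $\G_k$ from $\cC_{a_k}$ to $\cC_{a_k+\ell-1}$ contained in $[Y(\o)\cap V_{i,j}]^{1,0}_{V_{i,j}}\cap V_{a_k,a_k+\ell-1}$. Applying Corollary \ref{cor:duarte-path} to $\G_k$, the corresponding horizontal interval $I_{\G_k}$ of length $\ell$ at the height of the entry point is also in the closure. One then takes $F_k$ to be the existence (unioned over the possible entry heights and over vertical extents) of such a horizontal segment of length $\ell$ in $[Y(\o)\cap V_{a_k,a_k+\ell-1}]^{\tau_k}_{V_{a_k,a_k+\ell-1}}$ for an appropriate choice of boundary condition $\tau_k$. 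The correct choice must satisfy two competing constraints: it must be permissive enough that the sub-path $\G_k$ actually witnesses $F_k$ (so that $\cG_{i,j}\subseteq F_k$), yet restrictive enough that $F_k$ can be bounded by Lemma \ref{lem:D4}. This is achieved by conditioning on the value of $\o$ on the ``screening'' column $\cC_{a_k-1}$ (via Lemma \ref{lem:screening}) so that the effective boundary on $\partial_\parallel V_{a_k,a_k+\ell-1}$ has no more than $O(1)$ empty sites near the relevant height, at a cost of an entropy factor $(N^2)^m$ which is again absorbed.

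The \emph{main technical obstacle} is the last point: converting the Duarte sub-path (which in the full closure may rely on infection leaking in from $\cC_{a_k-1}$ via the west direction) into a local event bounded by the $(1,0)$-boundary estimate of Lemma \ref{lem:D4}. A naive use of the $(0,0)$ boundary on $V_{a_k,a_k+\ell-1}$ would make the local event trivial, since the west $0$-boundary alone suffices to infect the whole sub-region through the rules $\{-\vec e_1,\pm\vec e_2\}$; conversely, the $(1,0)$ boundary may not contain the restricted sub-path. The resolution, in the spirit of the proof of Lemma \ref{lem:D4} itself (where one passes from $V_{i,j}$ to the larger rectangle $\L_{1,j}$ at the cost of a factor $O(1/q^2)$), is to condition on a ``good'' west column via the screening lemma so that only $O(1)$ local empty sites effectively connect the sub-range to the outside, giving up only a polynomial-in-$1/q$ factor, which is harmless in front of the gain $e^{-c(\log q)^2/q}$ per sub-range.
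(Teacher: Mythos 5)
Your overall architecture (union bound over $(i,j)$, then a product of $\Theta(n_2/\ell)$ independent block events each controlled by Lemma \ref{lem:D4}) is genuinely different from the paper's argument, and it has two gaps that I do not see how to close. First, the central estimate $\mu(F_k)\le e^{-c(\log q)^2/q}$ is not what Lemma \ref{lem:D4} gives. That lemma bounds the probability that a fixed \emph{vertical} interval of length $\ell$ inside a single column is infectable; your $F_k$ asks for a \emph{horizontal} segment of length $\ell$ (equivalently, eastward propagation across $\ell$ consecutive columns) to lie in the closure. These are entirely different events, the horizontal one is not estimated anywhere in the paper, and bounding it is essentially a piece of the Duarte bootstrap spanning analysis rather than a corollary of Lemma \ref{lem:D4}. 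Second, the localization problem you flag at the end is real and unresolved: the witnessing sub-path $\G_k$ lives in $[Y(\o)\cap V_{i,j}]^{1,0}_{V_{i,j}}$, and what leaks into the block $V_{a_k,a_k+\ell-1}$ through its west boundary is the full infected trace of the closure on $\cC_{a_k-1}$, which can be a vertical interval of enormous length (in which case the horizontal segment propagates trivially and $F_k$ is not rare). Lemma \ref{lem:screening} requires a monotone staircase of \emph{infected} sites separating the block from the rest, which your sketch does not produce, and "only $O(1)$ empty sites connect the block to the outside" is not something one can arrange by conditioning at polynomial cost.

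The deeper issue is that by replacing $\cH_{i,j}=\cap_{k=i}^j\{\Phi(\o)_k=\downarrow\}\cap\cG_{i,j}$ with $\cG_{i,j}$ in the union bound, you discard exactly the information the paper's proof is built on. The $\downarrow$ conditions force the witnessing Duarte path to meet each column in at most $\ell$ sites, hence to be nearly horizontal; Corollary \ref{cor:duarte-path} then yields a horizontal infected corridor $C_h$ of length $(1-o(1))n_2$ at an essentially prescribed height. The paper then shows that with probability $1-e^{-\O(qn_2)}=1-e^{-\O(1/q^5)}$ such a corridor meets an upward empty stair of $\ell$ sites within its first $n_2/2$ columns, which would create an infectable vertical interval of length $\ell$ and hence an $\uparrow$ column inside $[i,j]$ --- contradicting the $\downarrow$ assumption. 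So the exponent $1/q^5=qn_2$ comes from a simple large-deviation estimate for the stair, not from multiplying per-block probabilities, and Lemma \ref{lem:D4} plays no role in this lemma. To salvage your route you would need an independent upper bound on the probability that the Duarte closure horizontally spans $\ell$ columns, which is not available in the paper.
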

\begin{proof}[Proof of Lemma \ref{lem:D5}]
Call $\cH_{i,j}$ the event $\cap_{k=i}^j\{\o\in \O\colon\Phi(\o)_k=\downarrow\}\cap
\cG_{i,j},$ where $\cG_{i,j}$ has been defined in \eqref{eq:B2bis}. 
Clearly 
\[
\mu\big(\cB_2(n_2-1)\big)\le \sumtwo{i,j}{j-i\ge n_2-2}
\mu\big(\cH_{i,j}\big)\le N^2 \maxtwo{i,j\in
  [N]}{j-i\ge n_2-2}
\mu\big(\cH_{i,j}\big),
\]
and it is enough to prove that 
\begin{equation}
  \label{eq:9}
\maxtwo{i,j\in  [N]}{j-i\ge n_2-2}
\mu\big(\cH_{i,j}\big)\le e^{-\O\big(1/q^5\big)}.
\end{equation}
For this purpose we first describe one important implication of the
event $\cH_{i,j}$.  
\begin{claim}
\label{claim:2}For any $\o\in \cH_{i,j}$ there exists $h\in \bbZ$ satisfying
$|h|\le N^2-(j-1)N + (j-i)\ell,$ such that
\[
C_{h}:=\big(\cup_{k=i}^{j}\{(k-N,h)\}\big)\cap V_{i,j}\subseteq
[Y(\o)\cap V_{i,j}]_{V_{i,j}}^{1,0}.
\] Moreover $C_h$ has length at least $(j-i)(1-o(1))\ge n_2(1-o(1))$
as $q\to 0$.
 \end{claim}
\begin{proof}[Proof of the claim]
Given $\o\in \cH_{i,j}$ let $\G=(x^{(1)},\dots,
x^{(n)})\subseteq [Y(\o)\cap V_{i,j}]_{V_{i,j}}^{1,0}$ be a Duarte
path from $\cC_i$ to $\cC_j$. Since $\Phi(\o)_k=\downarrow$ for all
$k\in \{i,\dots,j\}$ necessarily the cardinality of $\G\cap
\cC_k$ is at most $\ell$ for all $k\in \{i,\dots,j\}$. Therefore the
height $h$ of $x^{(1)}$ satisfies
\[
|h|\le N^2-(j-1)N + (j-i)\ell,
\] 
which, in turn, implies that the corresponding interval
$C_h$ has length greater than the largest
integer $m$ such that  
\[
N^2-(i-1)N-mN\ge  N^2-(j-1)N+(j-i)\ell.
\]
Using that $m+1$ violates the above inequality we get 
\[
m\ge (j-i)(1- \ell/N)-1\ge (1-o(1))n_2.
\] The fact that $C_h\subseteq
[Y(\o)\cap V_{i,j}]_{V_{i,j}}^{1,0}$ follows from Corollary
\ref{cor:duarte-path}.

\end{proof}
It is now easy to finish the proof of the lemma. As in the proof of
Claim \ref{claim:1} and using a union bound over the possible value of the variable $h$ of the
claim, with probability larger than 
\[
1-2N^2e^{-\O(q n_2)}\ge 1-e^{-\O(1/q^5)}, 
\]
every interval  $C_{h}$ as above with $|h|\le N^2-(j-1)N+(j-i)\ell$ meets an
empty upward stair, \ie a sequence $(x_1,\dots,x_{\ell})$ of empty sites
  belonging to the first $n_2/2$ columns crossed by $C_{h}$ and such that
  $x_m=(j_m,h+m)$ with $j_m<j_{m+1}$ for all
    $m\in [\ell]\}.$ If $C_h$ is also infected, then the presence of the
    above empty stair implies that there
  exists $i\le k\le i+\frac 23 n_2$ and a vertical interval
  $I\subseteq \cC_k$ of length at least $\ell$ such that
  $I\subseteq [Y(\o)\cap
  V_{i,j}]_{V_{i,j}}^{1,0}$. The latter property implies that $\Phi(\o)_k=\uparrow$. 
Hence $\mu\big(\cH_{i,j}\big)$ satisfies \eqref{eq:9}
uniformly in $j-i\ge n_2-2$. 
\end{proof}
\subsubsection{Finishing the proof of Proposition \ref{prop:P2}}
\label{sec:finish-proof} Recall
the definition \ref{def:tf} of the test function $\phi$ and of the events $\O_g,\O_{\downarrow}$ and
$\cA_{\epsilon,q}$. Notice that $\O_g\cap \cB_2(n_2-1)^c \subseteq
\cA_{\epsilon,q}$ and that $\O_\downarrow$ is a decreasing
event. Using Lemma \ref{lem:D5} we get
\begin{gather*}
\mu(\phi)\ge \mu\big(\cA_{\epsilon,q}\big)\ge \mu\big(\O_g\cap \cB_2(n_2-1)^c \big)\\
\ge \mu\big(\O_\downarrow\big)\mu\Big(\prod_{|k|\le
  \ell}\o_{(0,k)}=1\Big)- \mu\big(\cB_2(n_2-1)\big)\\
\ge \mu\big(\O_\downarrow\big)(1-q)^{2\ell +1}-e^{-\O(1/q^5)}\ge
q^{O(1)}\mu\big(\O_\downarrow\big) -e^{-\O(1/q^5)},
\end{gather*}
where in the third  inequality we used the FKG inequality. Using Lemma
\ref{lem:D4} and a union bound, 
\begin{align*}
 \mu\big(\O_\downarrow\big)&\ge 1- \mu\Big(\cup_{j=1}^N\cup_{I\in
   \cI_j(\ell)}\cO^{1,0}_{V_{1,j}}(I)\Big)\\
&\ge 1- 4e^{-(c-5\epsilon)(\log q)^2/q}=1-o(1)
\end{align*}
if $\e$ is small enough, where we let $\cI_j(\ell)$ be the family of intervals of the $j^{th}$-column
whose length is at least $2\ell+1$. In conclusion $\mu(\phi)\ge q^{O(1)}$ for
$\e$ small enough.

We now turn to bound from above the Dirichlet form $\cD(\phi)$. By definition,
\begin{gather*}
\cD(\phi)=\sum_{x\in \bbZ^2}\mu\big(c_x \var_x(\phi)\big)=\sum_{x\in
  V}\mu\big(c_x \var_x(\phi)\big)\\
\le \mu(\cA)^{-1}q^{-1}\sum_{x\in V} \mu\big(c_x(\o)\id_{\{\o\in
  \cA\}}\id_{\{\o^x\notin \cA\}}\big)
\end{gather*}
where we used the fact that $\phi$ depends only on 
$\{\o_x\}_{x\in V}$ in the second equality and we wrote $\cA\equiv
\cA_{\epsilon,q}$ for notation convenience. Next we observe that,
\begin{gather}
\sum_{x\in V} \mu\big(c_x(\o)\id_{\{\o\in
  \cA\}}\id_{\{\o^x\notin \cA\}}\big)\nonumber \\\le 
\sum_{x\in V} \mu\big(c_x(\o)\id_{\{\o\in
  \cA\}}\id_{\{\o^x\in \cA^c,\,\o^x\in \cB_2(n_2-1)^c\}}\big)+
\sum_{x\in V} \mu(\id_{\{\o^x\in \cB_2(n_2-1)\}}) \nonumber \\
\le  \sum_{x\in V} \mu\big(c_x(\o)\id_{\{\o\in
  \cA\}}\id_{\{\o^x\in \cA^c,\,\o^x\in \cB_2(n_2-1)^c\}}\big) +
|V|\big((1-q)/q\big)\mu(\cB_2(n_2-1)) \nonumber \\
\le \sum_{x\in V} \mu\big(c_x(\o)\id_{\{\o\in
  \cA\}}\id_{\{\o^x\in \cA^c,\,\o^x\in \cB_2(n_2-1)^c\}}\big)+ e^{-\O(1/q^5)},
\label{eq:21}
\end{gather}
where in the last inequality we used Lemma \ref{lem:D5} and the bound
$|V|\le 2N^3\le e^{O((\log q)^2/q)}$.

Given $x\in V,$ let $\o\in \cA$ be such that $c_x(\o)=1$ and $\o^x\in
\cA^c\cap \cB_2(n_2-1)^c$ and recall that $N_\uparrow(\o)$ counts the number
of up-arrows in $\Phi(\o)$. We claim that $N_\uparrow(\o^x)\ge n_1-1$.
To prove the claim, let $\g$ be a legal path connecting $\O_g$ to
$\o$
such that $\g\cap \cB_i(n_i-1)=\emptyset, \ i=1,2$ and let $\g^x$ be the path
connecting $\O_g$ to $\bar\o^x$ obtained by adding to $\g$ the
transition $\o \to\o^x$. The path $\g^x$ is legal because
$\g$ is legal and $c_x(\o)=1$. Moreover $\g^x\cap \cB_2(n_2-1)=\emptyset$
because $\o^x\notin \cB_2(n_2-1)$. The assumption $\o^x\in \cA^c$ implies
that $\g^x\cap \cB_1(n_1-1)\neq \emptyset$. Using $\g\cap \cB_1(n_1-1)=\emptyset$ the latter
requirement becomes $N_\uparrow(\o^x)\ge n_1-1$ and the claim follows. 

In conclusion, 
\begin{gather*}
\sum_{x\in V} \mu\big(c_x(\o)\id_{\{\o\in
  \cA\}}\id_{\{\o^x\in \cA^c,\,\o^x\in \cB_2^c\}}\big)
\le 
\sum_{x\in V} \mu\big(N_\uparrow(\o^x)\ge n_1-1\big)\\
\le |V|\big((1-q)/q\big)\mu\big(N_\uparrow(\o)\ge n_1-1\big).
\end{gather*}
We finally bound from above $\mu\big(N_\uparrow(\o)\ge n_1-1\big)$ using
Lemma \ref{lem:D4}. Given $n\ge n_1-1$ and $E=\{j_1<\dots <j_n\},\
j_i\in [N],$  let
$\cN_E$ be the event that $\Phi(\o)_j=\uparrow$ if $j\in E$ and
$\Phi(\o)_j=\downarrow$ otherwise. By
construction 
\[
\mu(\cN_E)\le \mu\left(\bigcap_{k=1}^n
\cQ^{1,0}_{V_{j_{k-1}+1,j_{k}}}\right)\le \Big(\max_{i\le j}\mu(\cQ^{1,0}_{V_{i,j}})\Big)^n,
\]
where $j_0:=0$ and 
\[
\cQ^{1,0}_{V_{i,j}}=\{\exists I\in \cI_j(\ell) \text{ such that
} I\subseteq [Y(\o)\cap V_{i,j}]_{V_{i,j}}^{1,0}\}.
\]
where we recall that $\cI_j(\ell)$ is the family of intervals of the $j^{th}$-column
whose length is at least $2\ell+1$.
Lemma \ref{lem:D4} together with a union bound over $I\in \cI_j(\ell)$ give
\begin{gather*}
\max_{i\leq j}\mu\big(\cQ^{1,0}_{V_{i,j}}\big)\le \max_{i\leq j}\sum_{I\in \cI_j(\ell)}\mu\big(I\subseteq
[Y(\o)\cap V_{i,j}]_{V_{i,j}}^{1,0}\big)
\\\le 4 N^4\max_{i\leq j}\max_{I\in \cI_j(\ell)}\mu\big(I\subseteq
[Y(\o)\cap V_{i,j}]_{V_{i,j}}^{1,0}\big)
\le e^{-(c-4\e)(\log q)^2/2q}.
  \end{gather*}
In conclusion, for any $\e$ small enough,
\begin{align*}
\mu\big(N_\uparrow(\o)\ge n_1-1\big)&\le 
\sum_{n=n_1-1}^N \binom{N}{n}
                             e^{-(c-4\e) n(\log q)^2/2q}\\
&\le \sum_{n=n_1-1}^N \Big(N e^{-(c-4\e) (\log q)^2/2q}\Big)^n\\
&\le e^{-\e\,\O((\log q)^4/q^2)},
\end{align*}
because of the choice of $n_1=\e (\log q)^2/2q$. In conclusion, the r.h.s. of
\eqref{eq:21} is smaller than $e^{-\e\O((\log q)^4/q^2)}$ and the proof of
Proposition \ref{prop:P2} is complete.\qed
\section*{Acknowledgment}
We would like to thank R. Morris for several stimulating discussions.   
 \begin{bibdiv}
 \begin{biblist}


\bib{Aldous}{article}{
      author={Aldous, D.},
      author={Diaconis, P.},
       title={The asymmetric one-dimensional constrained {I}sing model:
  rigorous results},
        date={2002},
     journal={J. Stat. Phys.},
      volume={107},
      number={5-6},
       pages={945\ndash 975},
}


\bib{DaiPra}{article}{
author = {Asselah, A.},
author={Dai Pra, P.},
title = {{Quasi-stationary measures for conservative dynamics in the infinite lattice}},
journal = {Ann. Probab.},
volume={29},
number={4},
pages={1733--1754},
year = {2001},
}

\bib{BBPS}{article}{
author={Balister, P.},
author={B.~Bollob\'as},
author={M. J. Przykucki},
author={Smith,P.},
journal={Trans. Amer. Math. Soc.},
title={Subcritical $\cU$--bootstrap percolation models have non--trivial phase transitions},
pages={7385--7411},
volume={368},
year={2016},
}


\bib{BDMS}{article}{
author= {B.~Bollob\'as}, 
author= {H.~Duminil-Copin}, 
author= {R.~Morris}, 
author= {P.~Smith}, 
title={Universality of two-dimensional critical cellular automata}, 
journal={to appear in \emph{Proc. London Math. Soc.}},
year={2016},
eprint = {arXiv.org:1406.6680},
}

\bib{BCMS-Duarte}{article}{
author = {Bollob\'as,B.},
author= {Duminil-Copin, Hugo},
author= {Morris, Robert},
author= {Smith, Paul},
journal={Ann. Probab.},
volume={45},
pages={4222--4272},
		Title = {{The sharp threshold for the Duarte model}},
	Year = {2017},
}

\bib{BSU}{article}{
title={Monotone cellular automata in a random environment},
author={B.~Bollob\'as},
author={Smith, P.},
author = {Uzzell, A.},
journal={Combin. Probab. Comput.},
volume={24},
year={2015},
number={4},
pages={687--722},
}

 \bib{Berthier-Biroli}{article}{
 author={Berthier, L.},
 author={Biroli, G.} 
 title={Theoretical perspective on the glass transition and amorphous materials},
 journal={Rev. Mod. Phys.},
 number={ 83}
 pages={587--645},
 year={2011},
 }

\bib{CMRT}{article}{
      author={Cancrini, N.},
      author={Martinelli, F.},
      author={Roberto, C.},
      author={Toninelli, C.},
       title={Kinetically constrained spin models},
        date={2008},
     journal={Probab. Theory Relat. Fields},
      volume={140},
      number={3-4},
       pages={459\ndash 504},
  url={http://www.ams.org/mathscinet/search/publications.html?pg1=MR&s1=MR2365481},
}

%
%
%



\bib{CFM}{article}{
      author={Chleboun, Paul},
      author={Faggionato, Alessandra},
      author={Martinelli, Fabio},
       title={{Time scale separation and dynamic heterogeneity in the low
  temperature East model}},
       year ={2014},
     journal={Commun. Math. Phys. },
     volume={328},
       pages={955--993},
}


\bib{CFM3}{article}{
  author={Chleboun, Paul},
author ={Faggionato, Alessandra},
author={Martinelli, Fabio},
  title={Relaxation to equilibrium of generalised East processes on $Z^d$: Renormalisation group analysis and energy-entropy competition},
  journal={Ann. Probab.},
volume={44},
number={3},
pages={1817--1863},
  year={2016},
}


 \bib{CDG}{article}{
       author={Chung, F.},
       author={Diaconis, P.},
       author={Graham, R.},
        title={Combinatorics for the East model},
         date={2001},
      journal={Adv. in Appl. Math.},
       volume={27},
       number={1},
        pages={192\ndash 206},
   url={http://www.ams.org/mathscinet/search/publications.html?pg1=MR&s1=MR1835679},
 }




\bib{Duarte}{article}{
	Author = {Duarte, JAMS},
	Journal = {Physica A},
	Number = {3},
	Pages = {1075--1079},
	Title = {{Simulation of a cellular automat with an oriented bootstrap rule}},
	Volume = {157},
	Year = {1989}}

\bib{East-review}{article}{
      author={Faggionato, Alessandra},
      author={Martinelli, Fabio},
      author={Roberto, Cyril},
      author={Toninelli, Cristina},
       title={{The {E}ast model: recent results and new progresses}},
        date={2013},
     journal={Markov Processes Relat.},
volume={19},
pages={407--458},
}

\bib{GarrahanSollichToninelli}{article}{
      author={Garrahan, J.P.},
      author={Sollich, P.},
      author={Toninelli, C.},
       title={Kinetically constrained models},
       year={2011},
     journal={in "Dynamical heterogeneities in glasses, colloids, and granular
  media", Oxford Univ. Press, Eds.: L. Berthier, G. Biroli, J.-P. Bouchaud, L.
  Cipelletti and W. van Saarloos.},
}

\bib{JACKLE}{article}{
      author={J\"{a}ckle, J.},
      author={Eisinger, S.},
       title={A hierarchically constrained kinetic {I}sing model},
        date={1991},
     journal={Z. Phys. B: Cond. Matt},
      volume={84},
      number={1},
       pages={115\ndash 124},
}




\bib{KGC}{article}{
author={Keys, A.S.},
author={Garrahan, J.P.},
author={Chandler,D.},
title={Calorimetric glass transition explained by hierarchical dynamic facilitation},
journal={Proc. Natl. Acad. Sci. USA},
volume={ 110}, 
pages={4482--4487},
year= {2013}
}
\bib{Liggett}{book}{
      author={Liggett, T.M.},
       title={Interacting particle systems},
      series={Grundlehren der Mathematischen Wissenschaften [Fundamental
  Principles of Mathematical Sciences]},
   publisher={Springer-Verlag},
     address={New York},
        date={1985},
      volume={276},
        ISBN={0-387-96069-4},
}


\bib{Laure}{article}{
author={Mar{\^e}ch{\'e}, Laure},
title={Combinatorics for general kinetically constrained spin models},
date={2017},
 eprint={arXiv:1707.06924 [math.PR]},
}

\bib{MT}{article}{
author={Martinelli,Fabio},
author={Toninelli, Cristina},
title={Towards a universality picture for the relaxation to equilibrium of kinetically constrained models},
year={2017},
journal={to appear in \emph{Ann. Probab.}},
eprint={arXiv:1701.00107 [math.PR]},
}


\bib{MMoT}{article}{
author={Martinelli,Fabio},
author={Morris, Robert},
author={Toninelli, Cristina},
title={Universality results for kinetically constrained spin models in
  two dimensions},
year={2018},
eprint={arXiv:1801.01934 [math.PR]},
}

\bib{Mountford}{article}{
	Author = {Mountford, T. S.},
	Journal = {Stoch. Process, Their App. },
	Number = {2},
	Pages = {185--205},
	Title = {{Critical length for semi-oriented bootstrap percolation}},
	Volume = {56},
	Year = {1995}}


\bib{Robsurvey}{article}{
author = {Morris, Robert},
title = {{Bootstrap percolation, and other automata 
}},
journal={Eur. J. Comb.},
volume={66},
pages={250--263},
year = {2017},
}






\bib{Reed-Simon}{book}{
      author={Reed, M.},
      author={Simon, B.},
       title={Methods of modern mathematical physics: Functional Analysis},
   publisher={Academic Press},
     address={New York and London},
        date={1973},
 }

\bib{Ritort}{article}{
      author={Ritort, F.},
      author={Sollich, P.},
       title={Glassy dynamics of kinetically constrained models},
        date={2003},
     journal={Adv. Phys.},
      volume={52},
      number={4},
       pages={219\ndash 342},
}



 \bib{SE1}{article}{
       author={Sollich, P.},
       author={Evans, M.R.},
        title={Glassy dynamics in the asymmetrically constrained kinetic {I}sing
   chain},
         date={2003},
      journal={Phys. Rev. E},
        pages={031504},
 }



 \end{biblist}
 \end{bibdiv}

\end{document}